\newcommand{\Aut}{\operatorname{Aut}}
\newcommand{\Id}{\operatorname{Id}}
\newcommand{\Z}{\mathbb{Z}}
\newcommand{\Q}{\mathbb{Q}}
\newcommand{\C}{\mathbb{C}}
\newcommand{\N}{\mathbb{N}}
\newcommand{\Li}{\mathbb{L}}
\newcommand{\FF}{\mathbb{F}}
\newcommand{\F}{\mathscr{F}}
\newcommand{\U}{\mathscr{U}}
\newcommand{\unit}{\ensuremath{\mathbb{I}}}
\newcommand{\tcoev}{\stackrel{\longleftarrow}{\operatorname{coev}}}
\newcommand{\tev}{\stackrel{\longleftarrow}{\operatorname{ev}}}
\newcommand{\ev}{\stackrel{\longrightarrow}{\operatorname{ev}}}
\newcommand{\qs}{q}
\newcommand{\coev}{\stackrel{\longrightarrow}{\operatorname{coev}}}
\newtheorem{definition}{Definition}[subsection]
\newtheorem{theorem}[definition]{Theorem}
\newtheorem*{theorem*}{Theorem}
\newtheorem{proposition}[definition]{Proposition}
\newtheorem{lemma}[definition]{Lemma}
\newtheorem{remark}[definition]{Remark}
\newtheorem{notation}[definition]{Notation}
\newtheorem{corollary}[definition]{Corollary}
\newcounter{exo} \newcounter{numexercice}
\renewcommand{\theexo}{\arabic{exo}}
\newcounter{IntroCounter}
\begin{document}
\title[Coloured Jones and Alexander invariants as intersections of cycles]{Coloured Jones and Alexander polynomials as topological intersections of cycles in configuration spaces} 
  \author{Cristina Ana-Maria Anghel}
\address{Mathematical Institute, University of Oxford, Oxford, United Kingdom} \email{palmeranghel@maths.ox.ac.uk} 
  \thanks{ }
  \date{\today}

\begin{abstract}
Coloured Jones and Alexander polynomials are sequences of quantum invariants recovering the Jones and Alexander polynomials at the first terms. We show that they can be seen conceptually in the same manner, using topological tools, as intersection pairings in covering spaces between explicit homology classes given by Lagrangian submanifolds. The main result proves that the $N^{th}$ coloured Jones polynomial and $N^{th}$ coloured Alexander polynomial come as different specialisations of an intersection pairing of the same homology classes over two variables, with extra framing corrections in each case. The first corollary explains Bigelow's picture for the Jones polynomial with noodles and forks from the quantum point of view. Secondly, we conclude that the $N^{th}$ coloured Alexander polynomial is a graded intersection pairing in a $ \mathbb Z \oplus \mathbb Z_N$-covering of the configuration space in the punctured disc. 

\end{abstract}

\maketitle
\setcounter{tocdepth}{1}
 \tableofcontents
 {
 
\section{Introduction} 
The theory of quantum invariants for knots started with the discovery of the Jones polynomial. After that, Reshetikhin and Turaev developed an algebraic tool which starts with a quantum group and leads to a link invariant. Using this algebraic method, the representation theory of $U_q(sl(2))$ leads to a family of link invariants $\{J_N(L,q) \in \mathbb Z[q^{\pm 1}]\}_{N \in \mathbb N}$ called coloured Jones polynomials. The first term of this sequence is the original Jones polynomial. On the other hand, the quantum group at roots of unity $U_{\xi}(sl(2))$, leads to a sequence of invariants, called coloured Alexander polynomials, having the original Alexander invariant as the first term.  On the topological side, R. Lawrence (\cite{Law},\cite{Law1}) introduced a sequence of homological braid group representations based on coverings of configurations spaces and using these, Bigelow and Lawrence gave a homological model for the original Jones polynomial, using the skein nature of the invariant for the proof. Later on, Kohno and Ito (\cite{Koh}, \cite{Koh2} \cite{Ito},\cite{Ito2}) presented an identification between highest weight quantum representations of the braid group and the homological Lawrence representations. 

We are interested in questions concerning topological models for these quantum invariants, using homological braid group actions on the homology of configuration spaces. In \cite{Cr1} we presented a topological model for all coloured Jones polynomials, showing that they are graded intersection pairings between two homology classes in a covering of the configuration space in the punctured disc. This result used the formulas from \cite{Ito}. However, even if the definition of these homology classes was explicit, it involved functions that are difficult to deal with from the computational point of view. 

Concerning the representation theory of quantum groups at roots of unity, in \cite{Ito2}, Ito suggested an identification of highest weight representations at roots of unity with a quotient of the Lawrence representation. Then he concluded a homological model for the coloured Alexander invariants as a sum of traces of these truncated Lawrence representations.  Based on Ito's identification at roots of unity, we showed in \cite{Cr2} a topological model for the coloured Alexander invariants as graded intersection pairings between two homology classes in the truncated Lawrence representation, using a quotient of the homology of the covering of the configuration space in the punctured disc. Out of these two topological models, we reached three precise questions, as follows.
\begin{itemize}
\item{\bf Question 1} Find topological models with explicit homology classes.\\ 
\item{\bf Question 2} What is the meaning of the truncation which occurs at the homological level in the previous models for $U_q(sl(2))-$quantum invariants at root of unity?\\ 
\item{\bf Question 3} What is the explanation from the quantum point of view of Bigelow's model for the Jones polynomial?\\
\end{itemize}
In this paper we answer these problems. Our main result shows that the $N^{th}$ coloured Jones and coloured Alexander polynomials come from the intersection pairing of the {\em same concrete homology classes over two variables} conveniently specialised, with an extra framing coefficient.


 Some consequences of the main theorem are the following.
\begin{itemize}
\item This result explains for the case of the Jones polynomial, why Bigelow's noodles and forks appear naturally from the quantum world.  \\
\item Moreover, it explains why Bigelow's model still gives the Jones polynomial when one removes one noodle.\\
\item We show a topological model for the $N^{th}$ coloured Alexander polynomial as an intersection pairing in a $\Z \oplus \Z_N-$covering of the configuration space, {\em without any further truncation or specialisation}.
\end{itemize}

\subsection{Description of the topological tools} 
Let $C_{n,m}$ be the unordered configuration space of $m$ points in the $n$-punctured disc. We will use the following tools for our construction:
\begin{enumerate}
\item sequence of Lawrence representations $H_{n,m}$ which are $\Z[x^{\pm 1},d^{\pm 1}]$-modules and carry a $B_n-$action
(defined from the Borel-Moore homology of a $\Z \oplus \Z $-covering of $C_{n,m}$- definition \ref{T1})
\item sequence of dual Lawrence representations $H^{\partial}_{n,m}$ ( notation \ref{T2} )\\ 
(defined using the homology relative to the boundary of the same covering) 
\item certain topological intersection pairings $< , >$ between the Lawrence representations and their dual representations 
$$< , >:H_{n,m} \otimes H^{\partial}_{n,m}\rightarrow \Z[x^{\pm1}, d^{\pm1}] \ \ \ \ \ \ \ \ \ \ \ \ \ \ \ \  ( \text{ definition }\ref{T3} ).$$
\end{enumerate}
 \
 In \cite{Martel}, Martel presented a version of Kohno's identification for the generic quantum group $U_q(sl(2))$ with more explicit bases in the Lawrence representation. In the following we will use this identification. First of all, we start on the algebraic side with quantum representations on weight spaces from tensor powers of the generic Verma module over two variables. We remark that when we specialise to one variable with $q$ generic or $q=\xi_N$ a root of unity, we can use weight spaces in a tensor power of an $N$-dimensional subspace inside the Verma module. For the generic version this is not surprising, however, for the root of unity case this differs from the usual construction of the coloured Alexander invariants. After we study the precise form of the coefficients of the $R$-matrix after specialisations, we show that we can see both coloured Jones invariants and coloured Alexander invariants from a specific weight space inside the tensor power of the Verma module over two variables. Then, we use identifications with the homological Lawrence representation and we construct certain homology classes. In the last part, we show that the graded intersection pairing between these classes leads by two different specialisations to the two sequences of quantum invariants, namely coloured Jones polynomials and coloured Alexander polynomials.
\begin{notation}
We will use the following specialisations:\\
1) Generic case $(q \text{ generic} ,  \lambda=N-1 \in \N)$
$$\psi_{q,\lambda}: \Z[x^{\pm},d^{\pm}]\rightarrow \Z[q^{\pm}]$$
2) Root of unity case $(q=\xi_N=e^{\frac{2\pi i}{2N}} , \lambda\in \C)$
$$\psi_{\xi_N,\lambda}: \Z[x^{\pm},d^{\pm}]\rightarrow \C.$$
given by the formula:
\vspace{-5mm}
\begin{equation*}
\begin{cases}
\psi_{q,\lambda}(x)=q^{2\lambda}\\
\psi_{q,\lambda}(d)=q^{-2}.
\end{cases}
\end{equation*}

\end{notation}
We present the detailed construction of the homology classes in definition \ref{D:4}. They are certain lifts of immersed Lagrangians in the configuration space, as in the picture below.
\vspace{0mm}
$$ {  \bf \Huge \color{red} H_{2n-1,(n-1)(N-1)}}  \ \ \ \ \ \ \ \ \ \ \ \ \ \ \ \ \ \ \ \ \ \ \ \ \ \ \ \ \ \ \ \ \ \ \ \ \ \ \ {\bf \Huge \color{green} H^{\partial}_{2n-1,(n-1)(N-1)}}$$
$$ {\Huge \color{red} \mathscr E_n^{N}= \hspace{-3mm}\sum_{{ i_1,...,i_{n-1}=0}}^{N-1} \hspace{-3mm} d^{\sum_{k=1}^{n-1} i_k} \cdot \tilde{ \mathscr U}_{0,i_1,...,i_{n-1},N-1-i_{n-1},...,N-1-i_{1}}} \ \ \ \ \ \ \ \ \ \ \ \ \ \ \ \ \ \ \ \ \ \ \ \ \ \  {\color{green} \mathscr G_{n}^N} \ \ \ \ \ \ \ \ \ \ \ \ \ \ \ \ \ \ \ \ \ \ \ \ \ \ \ \ \ \ \ \ \ \ \  $$

\vspace{-3mm}
$\hspace{60 mm} \downarrow \text{lifts}$
\vspace{-2mm}
\begin{figure}[H]
\centering
\includegraphics[scale=0.44]{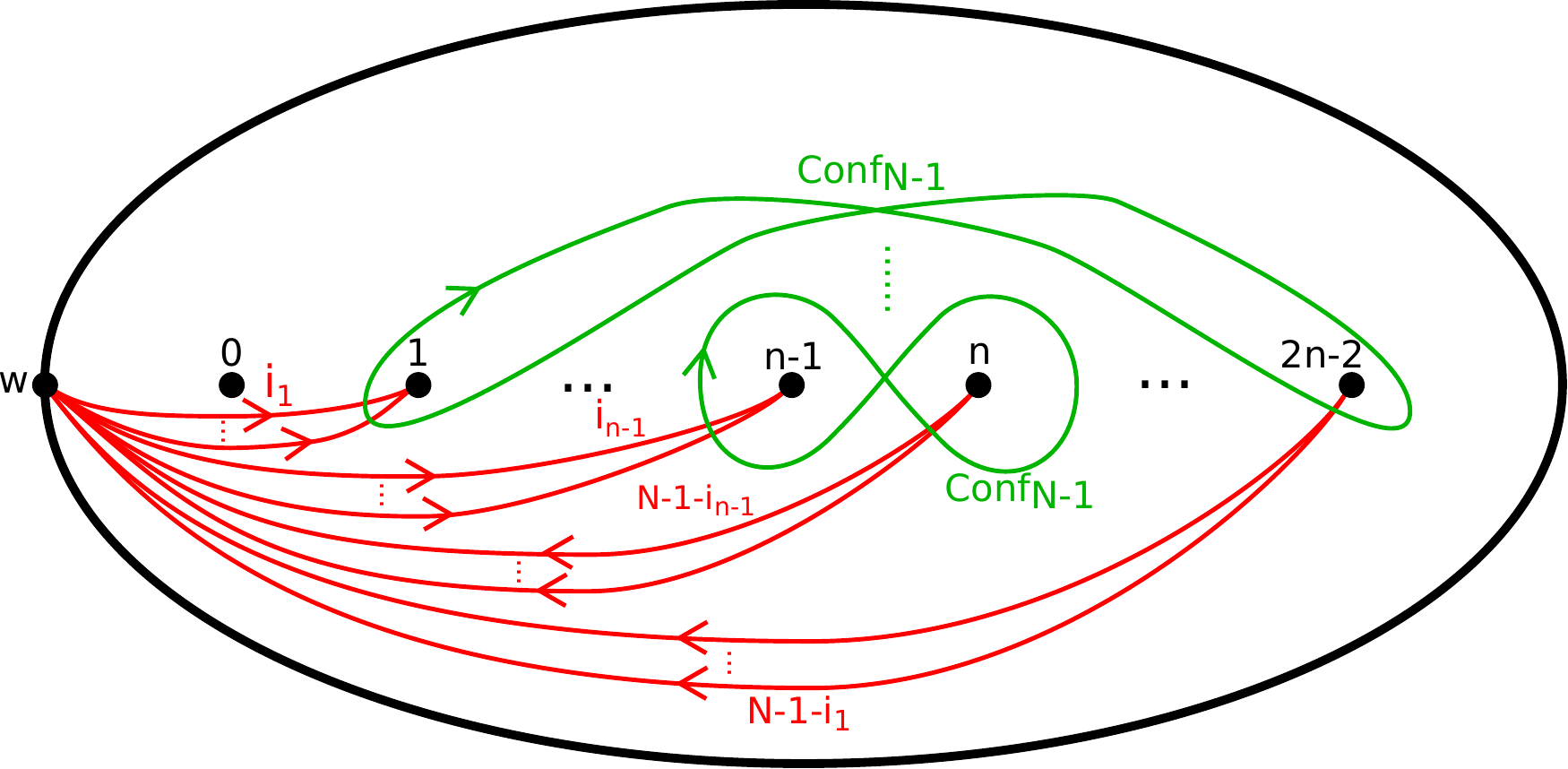}
\vspace{-4mm}
$${\color{red} \bar{U}_{0,i_1,...,i_{n-1},N-1-i_{n-1},...,N-1-i_{1}}} \ \ \ \ \ \ \ \ \ \ \ \ \ \ \ \ \ \ \ \ \ \ \ \ \ \ \ \ \ \ \ \ \ \ \ \ \ \ \ \  \ \ \ \ \ \ \ \ \ \ \color{green} {G_n^N} \ \ \ \ \ \ \ \ \ \ \ \ \ \ \ \ \ \ \ \ \ \ \ 
$$
\vspace{-4mm}
\caption{Homology classes (definition \ref{D:4})}
\label{fig:classes}
\end{figure}
\begin{theorem}(Coloured Jones and Alexander invariants from generic intersection pairings)\label{THEOREM}

Let us consider the homology class $\mathscr E_n^N \in H_{2n-1,(n-1)(N-1)}$ which is the linear combinations of the submanifolds $\tilde{\mathscr U}$ presented above. Let $\mathscr G_{n}^N \in H^{\partial}_{2n-1,(n-1)(N-1)}$ be a certain lift of the product of figure-eight configuration spaces from figure \ref{fig:classes}. 

Then, if an oriented knot $L$ is a closure of a braid with $n$ strands $\beta_n \in B_n$, we have the following models:
\begin{equation}\label{THEOREM1}
\ J_N(L,q) \ = \ q^{-(N-1)w(\beta_n)} \cdot \ q^{(N-1)(n-1)} \ \ <(\beta_{n} \cup {\mathbb I}_{n-1} ) \ { \color{red} \mathscr E_n^N}, {\color{green} \mathscr G_n^N}> |_{\psi_{q,N-1}}
\end{equation} 
\begin{equation}\label{THEOREM2}
\Phi_{N}(L,\lambda)={\xi_N}^{(N-1)\lambda w(\beta_n)} \cdot {\xi_N}^{\lambda (1-N)(n-1)} \ <(\beta_{n} \cup {\mathbb I}_{n-1} ) \ { \color{red} \mathscr E_n^{N}}, {\color{green} \mathscr G_n^N}> |_{\psi_{\xi_N,\lambda}}. 
\end{equation}
Here $w(\beta_n)$ is the writhe of the braid and ${\mathbb I}_{n-1}$ is the trivial braid with $n-1$ strands.
\end{theorem}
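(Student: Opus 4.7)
The plan is to work on both sides over the full two-variable ring $\Z[x^{\pm 1},d^{\pm 1}]$ and to specialise only at the very end. On the algebraic side I would study the tensor power $(V^{q,\lambda})^{\otimes n}$ of the generic Verma module for $U_q(sl(2))$ over the two parameters $q$ and $\lambda$, then extract from the Reshetikhin-Turaev construction a formula that expresses the $n$-strand braid invariant of $\widehat{\beta_n}$ as a matrix coefficient on the weight space of weight $(n-1)(N-1)$. The point is that \emph{before} any specialisation, both the coloured Jones polynomial (under $\psi_{q,N-1}$) and the coloured Alexander invariant (under $\psi_{\xi_N,\lambda}$) arise from evaluating the same universal two-variable matrix coefficient, the first collapse reducing the Verma module to the $N$-dimensional irreducible, the second reducing it to the $N$-dimensional cyclic module at the root of unity that feeds the modified trace. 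The main technical input here is a careful analysis of the $R$-matrix entries on this fixed weight space in two variables, showing that the contributions surviving the partial-trace (over the last $n-1$ strands coming from opening the closure) are indexed by tuples $(i_1,\dots,i_{n-1})$ with $0 \le i_k \le N-1$, and that the coefficient attached to each tuple is exactly the monomial $d^{\sum_k i_k}$.

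Next, I would transport this matrix-coefficient picture to the topological side via Martel's refinement of the Kohno-Ito identification, which matches the weight space of $(V^{q,\lambda})^{\otimes n}$ of weight $m$ with the Lawrence representation $H_{n,m}$ equivariantly for the $B_n$-actions, using the specific bases that make the correspondence explicit over the ring $\Z[x^{\pm 1},d^{\pm 1}]$. Under this identification, the algebraic vector encoding the partial-trace closure of the last $n-1$ strands corresponds to the multi-noodle lift $\mathscr E_n^N$ inside $H_{2n-1,(n-1)(N-1)}$ (the coefficients $d^{\sum i_k}$ in the definition of $\mathscr E_n^N$ matching precisely the $R$-matrix monomials mentioned above), while the dual vector picking out the highest-weight component corresponds to the lifted product-of-figure-eights class $\mathscr G_n^N \in H^{\partial}_{2n-1,(n-1)(N-1)}$. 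The pairing $\langle\,,\,\rangle$ of definition \ref{T3} is then identified with the duality pairing between weight spaces, and $(\beta_n \cup \mathbb{I}_{n-1})$ acts by the braid action on the first half of the configuration space, leaving $\mathscr G_n^N$ untouched on the other half.

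Finally, to obtain the two formulas, I specialise and account for the framing. The specialisation $\psi_{q,N-1}$ produces an honest quantum trace on an $N$-dimensional irreducible, where the Reshetikhin-Turaev invariant of the braid closure $\widehat{\beta_n}$ differs from $J_N(L,q)$ only by the writhe-dependent pivotal correction $q^{-(N-1)w(\beta_n)}$ and the normalisation $q^{(N-1)(n-1)}$ coming from the quantum dimension inserted on the $n-1$ closing strands, giving \eqref{THEOREM1}. The specialisation $\psi_{\xi_N,\lambda}$ on the same homological pairing produces the modified trace construction of $\Phi_N(L,\lambda)$ up to the analogous framing factor ${\xi_N}^{(N-1)\lambda w(\beta_n)} \cdot {\xi_N}^{\lambda(1-N)(n-1)}$, giving \eqref{THEOREM2}. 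The step I expect to be hardest is the precise matching between the lifted geometric class $\mathscr E_n^N$ and the algebraic partial-trace vector at the level of the two-variable ring, since it requires checking that no additional correction appears after either specialisation — in particular that the root-of-unity formula survives \emph{without} the extra truncation or quotienting that was needed in \cite{Cr2}, and this is what allows a single unified pair of homology classes to produce both sequences of invariants.
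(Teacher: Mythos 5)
Your high-level architecture matches the paper: work on the weight space of weight $(n-1)(N-1)$, express the partial-trace/cut-open form of the Reshetikhin--Turaev invariant, transport it to the Lawrence representation via Martel's identification, and specialise at the end. But the proposal has a genuine gap that would make it fail as written, plus several technical omissions worth flagging.

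\textbf{The main gap: the source of $d^{\sum i_k}$.} You attribute the monomial $d^{\sum_k i_k}$ in $\mathscr E_n^N$ to ``a careful analysis of the $R$-matrix entries.'' This is wrong, and the error matters. The $R$-matrix does not produce this factor; it arises from the \emph{evaluation map} (the caps), specifically from the pivotal twist $f \mapsto f(Kv)$, since $Kv_i = sq^{-2i}v_i$. More seriously, over two variables the generic and root-of-unity pictures use \emph{different} pivotal elements ($K$ versus $K^{1-N}$), so the two evaluations genuinely differ before specialisation: $p_n^N(i_1,\dots,i_{n-1}) = s^{n-1}q^{-2\sum i_k}$ versus $p_n^{\xi_N}(i_1,\dots,i_{n-1}) = s^{(1-N)(n-1)}q^{-2(1-N)\sum i_k}$. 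The reason a single class $\mathscr E_n^N$ suffices is that at $q=\xi_N$ one has $\xi_N^{-2(1-N)} = \xi_N^{-2}$, so the two exponent formulas collapse to the same thing under $\eta_{\xi_N,\lambda}$. This is the content of Proposition~\ref{P:8}, and your proposal skips it entirely. Without it you cannot justify that a single unspecialised $d^{\sum i_k}$ encodes both invariants, which is precisely what the ``unified two-variable'' claim requires.

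\textbf{Secondary omissions.} First, the braid $(\beta_n \cup \mathbb I_{n-1})$ in the theorem has all strands oriented upwards, while the cut-open Reshetikhin--Turaev formula (Corollary~\ref{defCJ}, Proposition~\ref{defADO}) naturally produces $\beta_n \cup \bar{\mathbb I}_{n-1}$ acting partly on dual modules. Passing between these requires the twist $f: (G_N^*)^{\otimes n-1} \to G_N^{\otimes n-1}$ of equation~\eqref{eq:7} and the verification (Proposition~\ref{P:4}) that it does not change the invariant. Second, Martel's identification pairs the weight-space basis with \emph{multiarcs} $\mathscr F_e$, whereas the classes $\tilde{\mathscr U}$ in $\mathscr E_n^N$ are \emph{code sequences}; the change of basis $g$ from Lemma~\ref{L:1} and Definition~\ref{D:6} is needed, together with an argument that the resulting identification still holds over $\Z[s^{\pm1},q^{\pm1}]$ rather than the localised ring $\tilde{\Li}_N$ (Corollary~\ref{C:2}). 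Third, at the root of unity the chosen basis does not make $U_\lambda$ a submodule of $\hat V_{\xi_N,\lambda}$; you need Lemma~\ref{L:2} (the vanishing of $[N]_{\xi_N}$ in the divided-power coefficients) to conclude that $\hat\varphi^{\xi_N,\lambda}_n$ still preserves $U_\lambda^{\otimes n}$. Finally, you never establish the intersection computation itself, i.e.\ Lemma~\ref{R:1} that $\langle \tilde{\mathscr U}_{0,i_1,\dots,N-1-i_1}, \mathscr G_n^N\rangle = 1$ and vanishes on non-symmetric partitions; simply calling it ``the duality pairing between weight spaces'' conflates an algebraic pairing with a geometric intersection number that has to be verified for the chosen lifts.
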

This model answers the first question and we see that the classes that give the coloured Alexander and Jones polynomials are given by explicit linear combinations lifts of Lagrangian submanifolds in the configuration space.  

\subsection{Unified model coming from two variables}

\

\noindent 
The model presented above, together with the fact that the homology class $\mathscr E_n^N$ is defined over two variables, enable us to see both coloured Alexander and Jones polynomials conceptually in the same way, despite their initial different descriptions from the representation theory point of view. 
\begin{corollary}(Unified model giving coloured Jones and Alexander invariants)\\ \label{C:unified}
Let us fix a colour $N \in \N$ and $L$ an oriented knot given by the closure of $\beta_n \in B_n$. Let us consider the following polynomial in two variables:
$$\mathscr I_N(\beta_n):=<(\beta_{n} \cup {\mathbb I}_{n-1} ){ \color{red} \mathscr E_n^N}, {\color{green} \mathscr G_n^N}> \in \Z[x^{\pm 1},d^{\pm 1}].$$
Then, we recover the two quantum invariants from $\mathscr I_N$, through the corresponding specialisations as follows:
\begin{equation}
J_N(L,q)= q^{-(N-1)w(\beta_n)} \cdot \ q^{(N-1)(n-1)} \ \ \mathscr I_N(\beta_n)|_{\psi_{q,N-1}}.
\end{equation} 
\begin{equation}
\Phi_{N}(L,\lambda)={\xi_N}^{(N-1)\lambda w(\beta_n)} \cdot {\xi_N}^{\lambda (1-N)(n-1)}  \mathscr I_N(\beta_n) |_{\psi_{\xi_N,\lambda}}. \ \ 
\end{equation}

\end{corollary}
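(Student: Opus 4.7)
The plan is to observe that Corollary \ref{C:unified} is essentially a repackaging of Theorem \ref{THEOREM}. Once the symbol $\mathscr I_N(\beta_n) := \langle (\beta_n \cup \mathbb I_{n-1})\, \mathscr E_n^N,\, \mathscr G_n^N\rangle$ is introduced as an abbreviation for the two-variable intersection pairing, the two displayed equations of the corollary coincide with the formulas \eqref{THEOREM1} and \eqref{THEOREM2} of the main theorem, written in the new notation. The corollary therefore carries no independent content beyond recording the structural statement: the polynomial $\mathscr I_N(\beta_n)\in \Z[x^{\pm 1},d^{\pm 1}]$ is a single universal object from which both quantum invariants are extracted by specialisation.

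First I would verify that $\mathscr I_N(\beta_n)$ genuinely lives in $\Z[x^{\pm 1}, d^{\pm 1}]$, before any specialisation. This follows from three inputs: the topological pairing of definition \ref{T3} takes values in $\Z[x^{\pm 1},d^{\pm 1}]$ because it is computed on the $\Z\oplus\Z$-covering of the configuration space; the braid action $\beta_n\cup \mathbb I_{n-1}$ on $H_{2n-1,(n-1)(N-1)}$ preserves the two-variable structure; and the class $\mathscr E_n^N$, by its definition \ref{D:4} as an explicit $\Z[d^{\pm 1}]$-combination of lifts $\tilde{\mathscr U}_{0,i_1,\dots,N-1-i_1}$, is a genuine element of the Lawrence representation over two variables. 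Similarly $\mathscr G_n^N$ is a lift defined at the two-variable level.

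Next I would apply the specialisation morphisms $\psi_{q,N-1}$ and $\psi_{\xi_N,\lambda}$ to $\mathscr I_N(\beta_n)$. Since both the braid action and the intersection pairing are natural under base change of the coefficient ring, we obtain the identities
\begin{equation*}
\mathscr I_N(\beta_n)\big|_{\psi_{q,N-1}} = \langle (\beta_n\cup \mathbb I_{n-1})\, \mathscr E_n^N,\, \mathscr G_n^N\rangle\big|_{\psi_{q,N-1}},
\end{equation*}
and the analogous identity for $\psi_{\xi_N,\lambda}$. Substituting these into the formulas of Theorem \ref{THEOREM} and multiplying by the framing factors $q^{-(N-1)w(\beta_n)}\cdot q^{(N-1)(n-1)}$ and $\xi_N^{(N-1)\lambda w(\beta_n)}\cdot \xi_N^{\lambda(1-N)(n-1)}$ respectively yields the two equations of the corollary.

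There is no real obstacle here: the whole difficulty has been absorbed into the proof of Theorem \ref{THEOREM}, which already exhibits the two specialisations of one and the same two-variable intersection. The only point that deserves emphasis, and which I would make explicit in the write-up, is that the class $\mathscr E_n^N$ is constructed once, depending only on the colour $N$ and not on a choice of $q$ or $\lambda$; this is precisely what makes $\mathscr I_N(\beta_n)$ a well-defined polynomial in $\Z[x^{\pm 1},d^{\pm 1}]$ and legitimises the unified viewpoint of the corollary.
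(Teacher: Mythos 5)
Your proposal is correct and matches the paper's approach: the paper offers no separate proof for this corollary because it is, as you say, a direct repackaging of Theorem \ref{THEOREM} once the abbreviation $\mathscr I_N(\beta_n)$ is introduced. Your explicit checks (that $\mathscr I_N$ lives in $\Z[x^{\pm 1},d^{\pm 1}]$ and that specialisation commutes with the braid action and the pairing) are exactly the observations the paper leaves implicit.
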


\begin{figure}[H]
\centering
\includegraphics[scale=0.27]{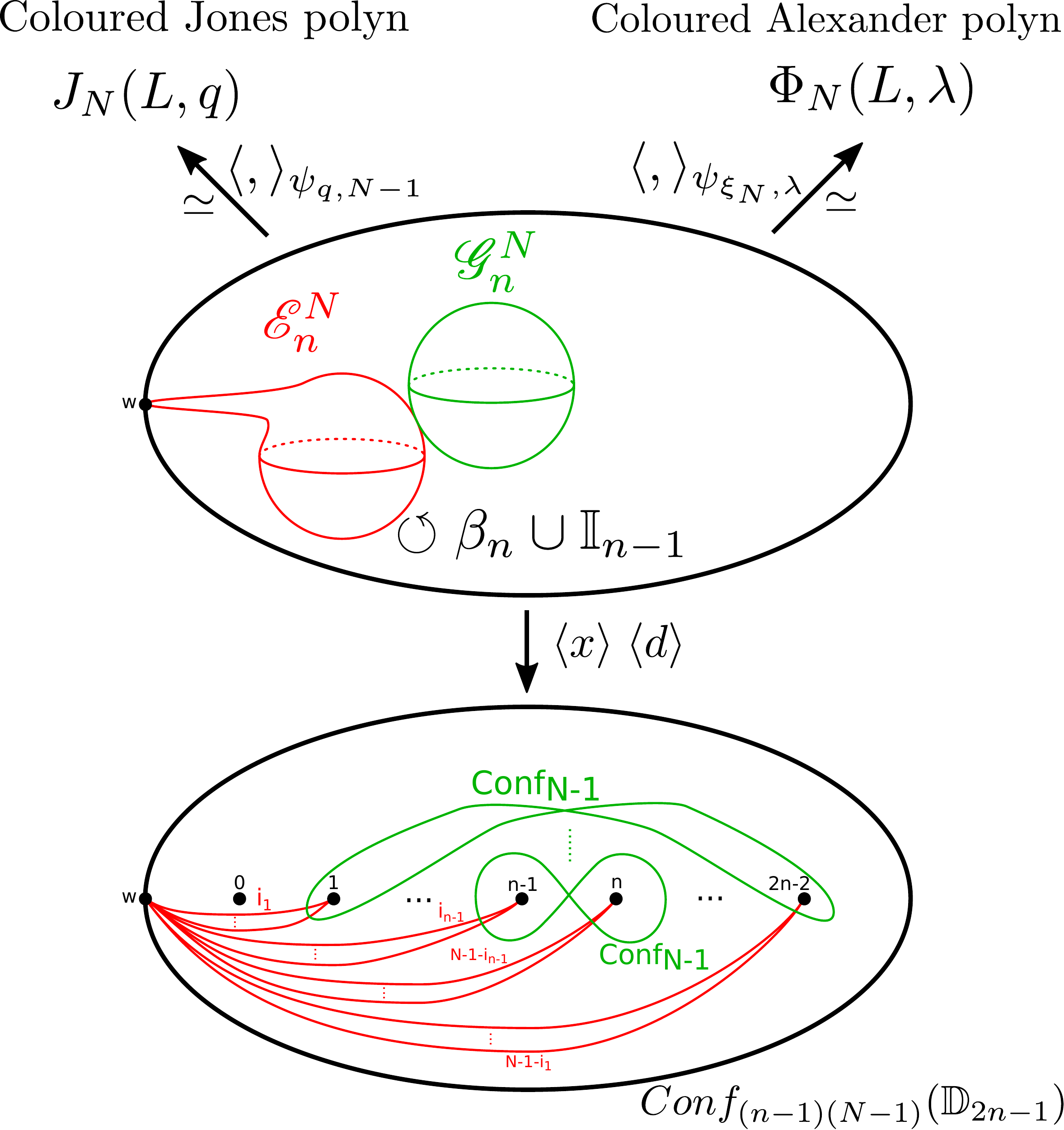}
\caption{}
\end{figure}
\vspace{-5mm}
This shows that both $N^{th}$ coloured Jones and coloured Alexander invariants come from a pairing $\mathscr I_N(\beta_n)$ which takes values in two variables. Then in order to obtain one or another, we have to correct with the corresponding framing coefficient and the last step is to specialise via $\psi_{q,N-1}$ or $\psi_{\xi_N,\lambda}$ respectively. 
\subsection{Relation to Bigelow's model for the original Jones polynomial}

\

\noindent
In \cite{Big}, Bigelow shows an intersection model for the Jones polynomial, using forks and noodles. More precisely, he consider the corresponding pairing between forks and noodles and shows that it is a knot invariant which satisfies the skein relation which characterises the Jones polynomial. 

However, the way in which this model corresponds to the definition of the Jones polynomial as a quantum invariant (from representation theory) remained still mysterious. In \cite{Cr1}, we explained that the configuration space that appears in his model comes naturally from the representation theory. However,  the question of what predicted the forks and noodles still remained open.

In Section \ref{SCJ:6}, we show why the figure eights are such a natural choice and how the pairing between them and forks fits with the quantum side.

 Moreover, it is known in the literature that one can obtain the Jones polynomial from Bigelow's model, if one removes one figure-eight. This paper gives an explanation for this fact, which corresponds to the property that on the quantum side, one can obtain the (coloured) Jones polynomials both from quantum traces as well as partial quantum traces. In the current paper we use the definition with partial traces, meaning that we cut the knot before applying the quantum definition. 
\begin{corollary}(Recovering Bigelow's model for the Jones polynomial)\\
The model for the Jones polynomial from Theorem \ref{THEOREM} with classes $\mathscr F_n^2$ and $\mathscr G_n^2$ corresponds to Bigelow's model with noodles and forks (taking into account that he works with plat closures and we have normal closures).
 
More precisely the class $\mathscr F_n^2$ is obtained from the forks where we push the middle point of each segment towards the boundary point $w$ and the class $\mathscr G_n^2$ is given exactly by the noodles from Bigelow's picture. 
\end{corollary}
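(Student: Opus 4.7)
The plan is to make this a direct geometric comparison at the level of immersed Lagrangians in the configuration space $C_{2n-1,n-1}$, specialising the construction of Definition \ref{D:4} to $N=2$ and then matching it piece by piece with the picture of forks and noodles from \cite{Big}. First I would unpack $\mathscr F_n^2 = \mathscr E_n^2$: when $N=2$ each index $i_k$ ranges over $\{0,1\}$, so the sum defining the red class becomes a signed sum (weighted by powers of $d$) of lifts of multi-arcs $\tilde{\mathscr U}_{0,i_1,\dots,i_{n-1},1-i_{n-1},\dots,1-i_1}$ in which each of the $n-1$ points of the configuration sits on a short figure-eight around a particular pair of consecutive punctures in the $(2n-1)$-punctured disc. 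Reading the indices, there is exactly one point per figure-eight, and the figure-eight spans two specific punctures determined by $i_k$ versus $1-i_k$. I would then compare this explicit shape to one of Bigelow's forks, whose handle starts at the boundary point $w$ and whose tine is a small arc surrounding two punctures: if one pushes the vertex joining the handle and the tine onto $w$, the handle collapses and the tine becomes precisely one lobe of a figure-eight based at $w$. Summing over the choices $i_k \in \{0,1\}$ for each coordinate reproduces Bigelow's weighted sum of forks, with the powers of $d$ accounting for the local monodromy contributions.

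Next I would identify the green class $\mathscr G_n^2$ with the noodles directly. The class $\mathscr G_n^N$ is, by Definition \ref{D:4}, a lift of the product of $n-1$ disjoint figure-eight configuration spaces, each embedded in a vertical strip separating a pair of adjacent punctures in the right-hand half of the punctured disc. For $N=2$ each factor is one-dimensional. Bigelow's noodles are embedded arcs based at a second boundary point, each separating two punctures. The key observation is that a single figure-eight configuration space (one point moving on a figure-eight around two punctures) deformation retracts, inside the relative configuration space, onto a noodle arc: the two lobes of the eight can be slid through the boundary and straightened into an arc with endpoints on $\partial D$. I would write out this retraction in a product neighbourhood and check that the lift to the $\Z\oplus\Z$-cover is compatible with Bigelow's chosen lift on noodles.

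With the two identifications in place, the last step is to verify that the intersection pairing $\langle\,\cdot\,,\,\cdot\,\rangle$ used in Theorem \ref{THEOREM} reduces to Bigelow's pairing between forks and noodles. This amounts to comparing the homological intersection in our $\Z\oplus\Z$-cover of $C_{2n-1,n-1}$ with the Lefschetz-type pairing Bigelow defines by counting intersection points weighted by local exponents of the covering map; since both pairings are defined by the same geometric intersection with the same monodromy weights, the equality is tautological once the classes have been matched. The framing prefactor $q^{-w(\beta_n)}\cdot q^{n-1}$ from (\ref{THEOREM1}) specialises at $N=2$ to exactly Bigelow's normalisation coefficient for the Jones polynomial, modulo the change of variable $\psi_{q,1}$.

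The main obstacle I expect is bookkeeping the discrepancy between plat and Markov closure: Bigelow uses the plat closure of a braid in $B_{2n}$ while the theorem uses the standard closure of $\beta_n \cup \mathbb I_{n-1} \in B_{2n-1}$, and the correspondence passes through the partial-trace description of $J_N$. Unwinding this requires checking that cutting the knot before applying the quantum functor (which is how partial traces enter the proof of Theorem \ref{THEOREM}) matches the geometric operation of removing the closure strands on the right-hand side of the disc in Bigelow's setup, and that the orientations/lifts on the remaining $n-1$ figure-eights coincide with Bigelow's noodles up to the sign conventions fixed in Section \ref{SCJ:6}. Everything else is a direct geometric unpacking of the definitions.
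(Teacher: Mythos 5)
Your overall plan—read off the geometric supports of $\mathscr F_n^2$ and $\mathscr G_n^2$ for $N=2$, match them with Bigelow's forks and noodles, then note that the two pairings are computed by the same geometric intersections with the same monodromy—is the route the paper implicitly takes (see the discussion in Section \ref{SCJ:6}, and the corollary is stated without a separate proof). But your description of the red class contains a real error that undermines the very identification you want. For $N=2$, the class $\tilde{\mathscr U}_{0,i_1,\dots,i_{n-1},1-i_{n-1},\dots,1-i_1}$ is a lift of $\bar U_{0,i_1,\dots,1-i_1}$, which is a \emph{code sequence} in the sense of the definition after Figure \ref{fig4}: a product of $n-1$ single straight arcs, each running from the boundary point $w$ to one puncture, where $i_k\in\{0,1\}$ decides whether the $k$-th arc ends at puncture $k+1$ or at puncture $2n-k$. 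These supports are not figure-eights. Figure-eights are closed immersed loops and appear \emph{only} as the supports of the green class $\mathscr G_n^2$. Summing over the $i_k$ assembles, as a homology class, the product of $n-1$ wedges of two arcs meeting at $w$, and such a wedge is precisely what a Bigelow fork becomes when its middle vertex is slid onto $w$, exactly as the corollary states. Your sentence that the pushed tine ``becomes precisely one lobe of a figure-eight based at $w$'' conflates a V-shaped pair of arcs meeting at $w$ with a pair of loops at $w$; this collapses the forks-versus-noodles distinction that the corollary is about, and it is the reason your first paragraph does not actually establish the identification of $\mathscr F_n^2$ with forks.

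Two secondary points. For the green class, identifying a configuration of one point on a figure-eight with a Bigelow noodle needs to be checked at the level of the relative homology $H^{\partial}_{2n-1,\,n-1}$ together with the chosen lift, not merely as a deformation of supports in the base; this is plausible but not automatic, especially since the paper asserts an exact match rather than a homotopy. And the plat-versus-closure bookkeeping is more than ``removing the closure strands'': Bigelow works in $C_{2n,n}$ with $n$ forks and $n$ noodles, whereas Theorem \ref{THEOREM} is set in $C_{2n-1,\,n-1}$ with $n-1$ of each, a drop of one puncture and one figure-eight that is precisely what the partial-trace formulation of $J_N$ in Corollary \ref{defCJ} encodes. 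Making that step explicit is the genuine content of the translation, and your sketch currently only gestures at it.
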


\subsection{Truncation encoded in the specialisation of coefficients}

\

In this part, we discuss about the relations between the previous topological models for coloured Alexander polynomials and this new model. We start with the second question that we presented in the begining, concerning the importance of the truncation of the Lawrence representation which appeared in \cite{Ito2} and \cite{Cr1}. 

These previous models for the coloured Alexander polynomials used certain quotients at the homological level, called truncations, and also specialisations of coefficients. In particular, in \cite{Ito2}, it is emphasised that there were no direct topological models for coloured Alexander polynomials with the colour $N>2$ and that the truncation of the Lawrence representation was one of the obstructions. In our model, we still choose the braid representative, but everything else including the braid group action and the intersection pairing are topological. Moreover, Theorem \ref{THEOREM} shows that the truncation part at the root of unity is reflected on the homological side directly by the specialisation $\psi_{\xi_N,\lambda}$. In other words, this specialisation is powerful enough to contain the truncation which occurred in the previous models.
\subsection{Coloured Alexander invariants from $\Z\oplus \Z_N$ coverings} 

\

In the last part, in section \ref{S:8}, we consider homological representations of the braid group on a $\Z \oplus \Z_N$-covering of the configuration space in the punctured disc. More precisely, we define $H^N_{n,m}$ to be a version of the Lawrence representation defined using the local system $\Z \oplus \Z_N$ (by projecting the second component of the $\Z\oplus \Z$ local system modulo $N$). Then, we have a corresponding intersection pairing, which we denote by $< , >_N$ (proposition \ref{P:5}):
\begin{equation}
< , >_N:H^N_{n,m} \otimes H^{\partial,N}_{n,m} \rightarrow \Z[s^{\pm 1}, \xi_N^{\pm 2}]
\end{equation}

\begin{theorem}(Coloured Alexander invariants from $\Z\oplus\Z_N$ covering spaces)\label{C:Alex}  
Let us consider the homology classes
$$\bar{\mathscr E}_n^{\xi_N} \in H^N_{2n-1,(n-1)(N-1)} \ \text{ and } \  \bar{\mathscr G}_{n}^N \in H^{\partial,N}_{2n-1,(n-1)(N-1)}$$ given by the same geometric submanifolds in the base space as the ones from Theorem \ref{THEOREM}, lifted in the covering $C^N_{2n-1,(n-1)(N-1)}$. Then the $N^{th}$ coloured Alexander invariant is given by the following intersection (if we identify $s$ with $\xi_N^{\lambda}$):
$$\Phi_{N}(L,\lambda)={\xi_N}^{(N-1)\lambda w(\beta_n)} \cdot {\xi_N}^{\lambda (1-N)(n-1)}<(\beta_{n} \cup {\mathbb I}_{n-1} ){ \color{red} \bar{\mathscr E}_n^{\xi_N}}, {\color{green} \bar{\mathscr G}_n^N}>_N.$$
\end{theorem}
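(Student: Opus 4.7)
The plan is to deduce Theorem \ref{C:Alex} from equation (\ref{THEOREM2}) in Theorem \ref{THEOREM} by exhibiting the pairing $\langle\cdot,\cdot\rangle_N$ on the $\Z\oplus\Z_N$-covering as an intermediate object through which the specialised pairing on the $\Z\oplus\Z$-covering factors. The key arithmetic point is that $\psi_{\xi_N,\lambda}(d)=\xi_N^{-2}$ has multiplicative order $N$, because $\xi_N^{2N}=e^{2\pi i}=1$. Hence $\psi_{\xi_N,\lambda}:\Z[x^{\pm 1},d^{\pm 1}]\to\C$ factors as
\[
\Z[x^{\pm 1},d^{\pm 1}] \xrightarrow{\ \bar\pi\ } \Z[s^{\pm 1},\xi_N^{\pm 2}] \xrightarrow{\ s\,\mapsto\,\xi_N^{\lambda}\ } \C,
\]
where $\bar\pi$ is the natural ring map that sends $d\mapsto \xi_N^{-2}$ (imposing $d^N=1$) and matches the conventions of $\langle\cdot,\cdot\rangle_N$ in the first variable.

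Geometrically, this ring-theoretic factorisation reflects a factorisation of covering spaces. Composing the classifying map $\pi_1(C_{2n-1,(n-1)(N-1)})\to\Z\oplus\Z$ with the projection $\Z\oplus\Z\twoheadrightarrow\Z\oplus\Z_N$ yields the classifying map of $C^N_{2n-1,(n-1)(N-1)}$, and produces a tower
\[
\widetilde{C}_{2n-1,(n-1)(N-1)}\ \xrightarrow{\,q_N\,}\ C^N_{2n-1,(n-1)(N-1)}\ \xrightarrow{\,p_N\,}\ C_{2n-1,(n-1)(N-1)},
\]
in which $q_N$ is the quotient by the deck subgroup $\{0\}\oplus N\Z$. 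Since each Lagrangian appearing in the construction of $\mathscr E_n^N$ and $\mathscr G_n^N$ admits a connected lift to $\widetilde{C}$, its image under $q_N$ is a connected lift to $C^N$; I would take these images (together with the same coefficient pattern as in the definition of $\mathscr E_n^N$, where $d$ now acts through the generator of $\Z_N$) as the classes $\bar{\mathscr E}_n^{\xi_N}$ and $\bar{\mathscr G}_n^N$. Equivalently, they are lifts to $C^N$ of the same base submanifolds from Figure \ref{fig:classes}, with basepoint conventions chosen compatibly along the tower.

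The crucial technical step is then the naturality
\[
\bigl\langle (q_N)_*\alpha,\,(q_N)_*\beta\bigr\rangle_N \ =\ \bar\pi\bigl(\langle\alpha,\beta\rangle\bigr) \qquad \bigl(\alpha\in H_{2n-1,(n-1)(N-1)},\ \beta\in H^{\partial}_{2n-1,(n-1)(N-1)}\bigr),
\]
which holds because both sides compute the same signed sum over intersection points $P$ in $C_{2n-1,(n-1)(N-1)}$ weighted by the deck-translation $(a(P),b(P))$ between the chosen lifts at $P$; the only difference is that on $C^N$ the second coordinate is read in $\Z_N$ rather than $\Z$, which matches the effect of imposing $d^N=1$. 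Combining this identity with the $B_n$-equivariance of $q_N$ and equation (\ref{THEOREM2}) gives the formula for $\Phi_N(L,\lambda)$ after identifying $s=\xi_N^\lambda$. The principal obstacle is the careful bookkeeping in this naturality statement: because $H_{n,m}$ is a Borel-Moore homology group, one must verify that the potentially infinite sum of intersection contributions on $\widetilde{C}$ specialises correctly to the finite sum computed on $C^N$, that the framing/basepoint choices used in defining the pairings at the two levels are compatible, and that no intersection points are created or lost under the quotient $q_N$.
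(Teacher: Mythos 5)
Your proposal is essentially the paper's own proof. The paper's Lemma \ref{L:3} is exactly your ``crucial technical step'': it states $\langle[M_1^N],[M_2^N]\rangle_N = \langle[\tilde M_1],[\tilde M_2]\rangle|_{\gamma_N}$ for compatibly-lifted submanifolds, proved by observing that both pairings sum over the same base-space intersection points with only the local-system evaluation changed, via $\gamma_N\circ\phi=\phi_N$; your $\bar\pi$ is the paper's $\gamma_N$, and the factorisation $\psi_{\xi_N,\lambda}=\eta^N_{\xi_N,\lambda}\circ\gamma_N$ is recorded in Figure \ref{fig:8}. The deduction then proceeds, as you say, by applying Lemma \ref{L:3} to equation \eqref{THEOREM2} and identifying $s=\xi_N^\lambda$. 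Your framing via the intermediate covering map $q_N$ and the equation $(q_N)_*$ is a slight variant in presentation — the paper instead lifts both submanifolds directly from the base to each covering — but the content is the same, and the bookkeeping caveats you raise (finiteness of the intersection sum, basepoint compatibility) are implicit in the paper's Definition \ref{D:55} and the lifting conventions of Lemma \ref{L:3}.
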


\subsection{Question-towards categorification} The main result from this article\\
 presents the explicit form of the Lagrangian manifolds that are naturally predicted from the quantum side and lead to the two sequences of quantum  invariants.  This paper comes as a pair with a sequel paper, where we study the geometry of these classes and we present a model whose corresponding Lagrangians are more suitable for categorifications. More specifically, in that paper, the submanifolds in the base space are already embedded rather than immersed and we encode geometrically the coefficients that occur in the first homology class.
\subsection{Question-towards loop expansions}
In \cite{GM}, Gukov and Manolescu introduced a two variable power series constructed from knot complements and conjectured that this is a knot invariant recovering the loop expansion of the coloured Jones polynomials. It would be interesting to investigate asymptotic limits of the unified model over two variables from \ref{C:unified} and relations with the loop expansion of coloured Jones polynomials. Pursuing this line, we are interested in studying connections between this model and the Gukov-Manolescu power series.

\subsection*{Structure of the paper} This paper has 7 main sections. In Section \ref{2}, we present the version of the quantum group as well as the representation theory that we work with. Then, in Section \ref{3} we discuss homological tools, which are given by different versions of the Lawrence representation. In Section \ref{4}, we construct an algebraic set-up over two variables which leads to the coloured Jones and Alexander polynomials, through different specialisations. In Section \ref{5} we define the main objects, namely the two homology classes given by Lagrangian submanifolds. Section \ref{6} is devoted to the proof of the intersection formula for the coloured Jones polynomials, presented in equation \eqref{THEOREM1}. Further on, in Section \ref{7} we prove the topological model for the coloured Alexander polynomials from equation \eqref{THEOREM2} and conclude Theorem \ref{THEOREM}. 
In the last part, Section \ref{S:8}, we define new homological tools and prove that the $N^{th}$ coloured Alexander invariant is an intersection pairing in a $\Z \oplus \Z_N$ covering of a configuration space, as in Theorem \ref{C:Alex}. 
\subsection*{Acknowledgements} 
 I would like to thank Professor Christian Blanchet for many beautiful discussions and for asking me two of the main questions. Also, I would like to thank Christine Lescop, Jacob Rasmussen, Alexis Virelizier and Emmanuel Wagner for useful conversations.  
 
 This paper was prepared at the University of Oxford, and I acknowledge the support of the European Research Council (ERC) under the European Union's Horizon 2020 research and innovation programme (grant agreement No 674978).
 
 \clearpage
\section{Notations}
Throughout the paper, we will use various specialisations of coefficients, in order to interplay between the algebraic side given by the representation theory and the homological side represented by homology of covering spaces. We will use the following definition.
\begin{notation}(Specialisation)\label{N:spec}\\ 
Let us start with a free module $M$ over a ring $R$ and let $\mathscr B$ be a basis of the module $M$.
If $R'$ is another ring, let us assume that we fix a specialisation of the coefficients, meaning a morphism:
$$\psi: R \rightarrow R'.$$
Then, the specialisation of the module $M$ by the function $\psi$ is the following module: $$M|_{\psi}:=M \otimes_{R} R'$$ which has the corresponding basis over $R'$: $$\mathscr B_{M|_{\psi}}:=\mathscr B \otimes_{R}1 \in M|_{\psi}. $$
\end{notation}
In our case, we will use the following notations.
\begin{notation}(Specialisations of coefficients)
\begin{equation*}
\begin{cases}
\gamma: \Z[x^{\pm},d^{\pm}] \rightarrow \Z[q^{\pm 1}, s^{\pm 1}]\\
\gamma(x)= s^{2}; \ \ \gamma(d)=q^{-2}.
\end{cases}
\end{equation*}
\begin{equation*}
\begin{aligned}
&\begin{cases}
\psi_{q,N-1}: \Z[x^{\pm},d^{\pm}]\rightarrow \Z[q^{\pm}]\\
\psi_{q,N-1}(x)= q^{2(N-1)}; \ \ \psi_{q,N-1}(d)=q^{-2}.
\end{cases}
\begin{cases}
\psi_{\xi_N,\lambda}: \Z[x^{\pm},d^{\pm}]\rightarrow \C\\
\alpha_{\lambda}(x)= {\xi_N}^{2 \lambda}; \ \ \alpha_{\lambda}(d)={\xi_N}^{-2}.
\end{cases}\\
&\begin{cases}
\eta_{q,N-1}: \Z[q^{\pm 1},s^{\pm 1}]\rightarrow \Z[q^{\pm 1}]\\
\psi_{q,N-1}(s)= q^{(N-1)}.
\end{cases}
\hspace{17mm}\begin{cases}
\eta_{\xi_N,\lambda}: \Z[q^{\pm 1},s^{\pm 1}]\rightarrow \C\\
\eta_{\xi_N,\lambda}(s)= \xi_N^{\lambda}; \ \ \psi_{q,N-1}(q)=\xi_N.
\end{cases}
\end{aligned}
\end{equation*}
\end{notation}

\definecolor{bittersweet}{rgb}{1.0, 0.44, 0.37}
\definecolor{brickred}{rgb}{0.8, 0.25, 0.33}
\begin{figure}[H]
\begin{center}
\begin{tikzpicture}
[x=1.1mm,y=1.1mm]

\node (b1)  [color=blue] at (-30,30)    {${\mathbb Z[x^{\pm1},d^{\pm1}]}$};
\node (b2) [color=orange] at (0,30)   {${\mathbb Z[s^{\pm1},q^{\pm1}]}$};
\node (b3) [color=red]   at (30,30)   {${\tilde{\Li}_N=\mathbb Z[s^{\pm1},q^{\pm1}]\left(I_N \right)^{-1}}$};
\node (b5) [color=green]  at (0,2)   {$\boldsymbol{\mathbb C}$};
\node (b5') [color=green]  at (0,-5)   {$\Z[q^{\pm 1}]$};

\node (b6) [color=blue]  at (-22,27)   {$\boldsymbol {x}$};
\node (b6') [color=blue]  at (-22,24)   {$\boldsymbol {d}$};
\node (b7) [color=orange]  at (-7,27)   {$\boldsymbol {s^{2}}$};
\node (b7') [color=orange]  at (-7,24)   {$\boldsymbol {q^{-2}}$};
\node (b8) [color=orange]  at (-6,20)   {$\boldsymbol {q}$};
\node (b8') [color=orange]  at (-2,20)   {$\boldsymbol {s}$};
\node (b9) [color=green]  at (-6,10)   {$\boldsymbol {\xi}$};
\node (b9') [color=green]  at (-2,10)   {$\boldsymbol {\xi^{\lambda}}$};


\draw[->,color=blue, dashed]   (b6)      to node[right,font=\small, yshift=3mm]{}                           (b7);
\draw[->,color=blue,dashed]   (b6')      to node[right,font=\small, yshift=3mm]{}                           (b7');
\draw[->,color=blue]   (b1)      to node[right,font=\small, yshift=3mm]{${\gamma}$}                           (b2);

\draw[->,color=red]             (b2)      to node[left,font=\small, yshift=3 mm]{$\iota_N$}   (b3);
\draw[->, color=cyan]             (b1)     to[in=160,out=20] node[right,yshift=-3mm,xshift=-5mm,font=\small]{$ {\tilde{\gamma}}_N$}   (b3);
\draw[->,color=red]             (b3)      to node[right,font=\small, xshift=1mm]{${\tilde{\eta}_{\xi,\lambda}}$}   (b5);
\draw[->,color=red,]             (b3)      to [in=30,out=-90] node[right,font=\small, xshift=1mm]{${\tilde{\eta}_{q,N-1}}$}   (b5');

\draw[->,color=orange,thick]   (b2)      to node[right,font=\small]{${\eta_{\xi,\lambda}}$}                        (b5);
\draw[->,color=green]   (b1)      to node[left,font=\small]{${\psi_{\xi,\lambda}}$}                        (b5);
\draw[->,color=green]   (b1)     to[in=160,out=-90] node[left,font=\small]{$\boldsymbol{\psi_{q,N-1}}$}                        (b5');
\draw[->,color=orange, dashed]   (b8)      to node[right,font=\small, yshift=3mm]{}                           (b9);
\draw[->,color=orange,dashed]   (b8')      to node[right,font=\small, yshift=3mm]{}                           (b9');
\end{tikzpicture}
\end{center}
\caption{Specialisations}
\label{fig:7}  
\end{figure}
\clearpage
\section{Representation theory of $U_q(sl(2))$}\label{2}
}

Let $\qs, s$ parameters and consider the ring 
$$\Li_s:=\Z[\qs^{\pm1},s^{\pm1}].$$
\begin{definition}(Quantum group over two variables with divided powers \cite{JK} )

Consider the quantum enveloping algebra $U_q(sl(2))$, to be the algebra over $\Li_s$ generated by the elements $\{ E,F^{(n)}, K^{\pm1} | \ n \in \mathbb N^{*} \}$ with the following relations:
\begin{equation*}
\begin{cases}
KK^{-1}=K^{-1}K=1; \ \ \ KE=\qs^2EK; & \ \ \ KF^{(n)}=\qs^{-2n} F^{(n)}K;\\
F^{(n)}F^{(m)}= {n+m \brack n}_{\qs} F^{(n+m)}&\\
[E,F^{(n+1)}]=F^{(n)}(q^{-n}K-q^{n}K^{-1}).
\end{cases}
\end{equation*}
\end{definition}
Then, one has that $U_q(sl(2))$ is a Hopf algebra with the following comultiplication, counit and antipode:
\begin{equation*}
\begin{cases}
\Delta(E)= E\otimes K+ 1 \otimes E  & S(E)=-EK^{-1}\\
\Delta (F^{(n)}) = \sum_{j=0}^n q^{-j(n-j)}K^{j-n}F^{(j)}\otimes F^{(n-j)}& S(F^{(n)})=(-1)^{n}q^{n(n-1)}K^{n}F^{(n)}\\
\Delta(K)=K\otimes K & S(K)=K^{-1}\\
\Delta(K^{-1})=K^{-1}\otimes K^{-1} &  S(K^{-1})=K.
\end{cases}
\end{equation*}

 We will use the following notations:
$$ \{ x \} :=\qs^x-\qs^{-x} \ \ \ \ [x]_{\qs}:= \frac{\qs^x-\qs^{-x}}{\qs-\qs^{-1}}$$
$$[n]_\qs!=[1]_\qs[2]_\qs...[n]_\qs$$
$${n \brack j}_\qs=\frac{[n]_\qs!}{[n-j]_\qs! [j]_\qs!}.$$

\begin{definition} (The Verma module)

Consider $\hat{V}$ be the $\Li_s$-module generated by an infinite family of vectors $\{v_0, v_1,...\}$. The following relations define an $U_q(sl(2))$ action on $\hat{V}$ (see \cite{JK}):
\begin{equation}\label{action}  
\begin{cases}
Kv_i=s\qs^{-2i}v_i,\\
Ev_i=v_{i-1},\\
F^{(n)}v_i = {n+i \brack i}_\qs \prod _{k=0}^{n-1} (s\qs^{-k-i}-s^{-1} \qs^{k+i}) v_{i+n}.
\end{cases}
\end{equation}
\end{definition}


\
In the sequel, we will use certain specialisations of the previous quantum group.
\begin{definition}
We consider two type of specialisations of the coefficients, where we specialise the highest weight using $q^{\lambda}$:\\
{ Case a) $(q \text{ generic}, \ \lambda=N-1 \in \N)$:}
\begin{equation}
\begin{cases}
 \eta_{q,\lambda}:\Z[q^{\pm 1},s^{\pm 1}]\rightarrow\Z[q^{\pm 1}]\\
 \eta_{q,\lambda}(s)= q^{\lambda }.
\end{cases}
\end{equation}
{ Case b) $(q=\xi_N=e^{\frac{2 \pi i}{2N}}, \ \lambda \in \C \text{ generic})$:}
\begin{equation}
\begin{cases}
 \eta_{\xi_N,\lambda}:\Z[q^{\pm 1},s^{\pm 1}]\rightarrow\C \\
\eta_{\xi_N,\lambda}(s)= {\xi}^{\lambda }_N.
\end{cases}
\end{equation}

\end{definition}
Using these specialisations, we will consider the corresponding specialised quantum groups and their representation theory. We obtain the following:
{\footnotesize
\begin{center}
\label{tabel}
\begin{tabular} { |c|c|c|c| } 
 \hline
 Ring & Quantum Group & Representations &  Specialisations \\ 
\hline                                    
$ \Li_s=\Z[q^{\pm},s^{\pm}] $ & $ U_q(sl(2))$  & $\hat{V}$  & $ q,s \text{  param }  $ \\
\hline
 $ \Li=\Z[q^{\pm}] $ & $\U=U_q(sl(2))\otimes_{\eta_{q,N-1}}\Z[q^{\pm 1}]$  &  $\hat{V}_{q,N-1}=\hat{V}\otimes_{\eta_{q,N-1}}\Z[q^{\pm 1}]$    & $ a) (q \text{ param,                                                               } $ $ \lambda=N-1)$   \\
 \hdashline
& &$ V_N\subseteq \hat{V}_{q,N-1}$  &$ \eta_{q,N-1} \ $\\
\hline
 $ \C $ & $\U_{\xi_N}=U_q(sl(2))\otimes_{\eta_{\xi_N,\lambda}}\C$  & $\hat{V}_{\xi_N,\lambda}=\hat{V}\otimes_{\eta_{\xi_N,\lambda}}\C$  & $ b) (q=\xi_N $,  $ \lambda \in \C)$   \\
 \hdashline
& & $U_{\lambda}\subseteq \hat{V}_{\xi_N,\lambda}$  & $\eta_{\xi_N,\lambda} \ $\\
\hline
\end{tabular}
\end{center}}
\begin{lemma}\label{inclusion}(\cite{JK},\cite{Cr2})
For the first case $a)$, which has a natural highest weight $\lambda$, $\hat{V}_{q,N-1}$ has an $N$-dimensional $\U$-submodule generated by the first $N$ vectors $ \{ v_{0},...,v_{N-1} \} $.
We denote this by: 
$$V_{N}:=<v_{0},...,v_{N-1}> \subseteq \hat{V}_{q,N-1}.$$
\end{lemma}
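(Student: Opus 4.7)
The plan is to verify directly, using the action formulas \eqref{action} after specialisation $s = q^{N-1}$, that the $\Li$-submodule $V_N := \langle v_0, \ldots, v_{N-1}\rangle$ is stable under each generator $K^{\pm 1}, E, F^{(n)}$ of $\U$. Dimension $N$ is automatic since the $v_i$ form part of a basis of $\hat V$, so the content is the stability check.

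First, I would handle the easy generators. The formula $K v_i = s q^{-2i} v_i$ shows that $K$ (hence $K^{-1}$) acts diagonally in the basis $\{v_i\}$, so $V_N$ is trivially $K^{\pm 1}$-stable. For $E$, the relation $E v_i = v_{i-1}$ combined with the standard convention $E v_0 = 0$ in the Verma module means $E$ lowers the index and therefore preserves $V_N$.

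The key step is the divided power $F^{(n)}$. Its action on $v_i$ has prefactor
$$C_{n,i}(s,q) \; = \; \binom{n+i}{i}_{q} \prod_{k=0}^{n-1}\bigl(s q^{-k-i} - s^{-1} q^{k+i}\bigr),$$
and the output vector is $v_{i+n}$. For $0 \leq i \leq N-1$, if $i+n \leq N-1$ then $v_{i+n} \in V_N$ and there is nothing to check. The remaining range is $i+n \geq N$ with $i \leq N-1$, where I must show the coefficient vanishes after specialisation $\eta_{q,N-1}(s) = q^{N-1}$. The $k$-th factor in the product becomes $q^{N-1-k-i} - q^{-(N-1)+k+i}$, which vanishes precisely when $k = N-1-i$. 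Under our hypothesis we have $0 \leq N-1-i \leq n-1$ (since $n \geq N-i$), so this index is actually hit in the product. Hence $C_{n,i}(q^{N-1},q) = 0$ and $F^{(n)} v_i = 0 \in V_N$.

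There is no serious obstacle: everything reduces to the observation that, at weight $s = q^{N-1}$, one of the factors in the product defining the divided-power action is exactly the one killing the transition $v_{N-1} \to v_N$, together with the appearance of similar factors forcing every transition out of $V_N$ to vanish. Combining the three cases, $V_N$ is $\U$-stable, finishing the proof.
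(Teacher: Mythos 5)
Your proof is correct and complete. The paper cites this lemma from \cite{JK} and \cite{Cr2} without reproducing a proof, so there is no in-paper argument to compare against; the direct stability check you give is the expected one. The computation is right: after specialising $s = q^{N-1}$, the factor indexed by $k = N-1-i$ in the product $\prod_{k=0}^{n-1}(s q^{-k-i} - s^{-1} q^{k+i})$ becomes $q^{N-1}q^{-(N-1)} - q^{-(N-1)}q^{N-1} = 0$, and this index lies in range exactly when $i+n \geq N$, killing every transition out of $V_N$. It is worth noting that this vanishing mechanism (a linear factor in the product collapsing at the special weight $s=q^{N-1}$) is genuinely different from the one the paper uses in its Lemma~\ref{L:2} for the root-of-unity case, where the relevant coefficient dies because $[N]_{\xi_N}=0$ appears in a quantum binomial; the paper flags this distinction in its discussion after the table, and your argument correctly isolates the generic-case mechanism.
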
 
\begin{notation}
For the root of unity case, we consider the $\C$-vector space generated by the first $N$ vectors in the specialisation of the generic Verma module as follows:
$$U_{\lambda}:=<v_{0},...,v_{N-1}> \subseteq \hat{V}_{\xi_N,\lambda}.$$
 
\end{notation}
\subsection{Braid group actions}\label{SS:1}
\begin{definition}(\cite{JK},\cite{Ito})\label{P:RT}(Braid group action on the Verma module)

There exists an $R$-matrix for the generic quantum group, which belongs to the completion $ U_\qs(sl(2))\hat{\otimes} U_\qs(sl(2))$ (\cite{JK} page 7), which leads to a brad group action. In order to define it, we start with following expressions:
\begin{equation}
\begin{aligned}
R=\sum_{n=0}^{\infty} q^{\frac{n(n-1)}{2}}E^{n}\otimes F^{(n)}\\
C(v_i \otimes v_j)=s^{-(i+j)}q^{2ij}v_j \otimes v_i.
\end{aligned}
\end{equation} 

By twisting the two components on which we act with the $R$-operator, we consider the following element: 
\begin{equation}
\mathscr R=C \circ R.
\end{equation}
\end{definition}

\begin{remark}(Action on the generic basis \cite{JK}) \label{R:R}\\
The $\mathscr{R}$-action on the standard basis of $\hat{V}\otimes \hat{V}$ is given by the following formula:
\begin{equation}
 \mathscr{R}(v_i\otimes v_j)= s^{-(i+j)} \sum_{n=0}^{i} F_{i,j,n}(q) \prod_{k=0}^{n-1}(sq^{-k-j}-s^{-1}q^{k+j}) \ v_{j+n}\otimes v_{i-n}.
 \end{equation}
Here, the polynomial $F_{i,j,n}\in \Z[q^{\pm}]$ has the following form:
$$F_{i,j,n}(q)=q^{2(i-n)(j+n)}q^{\frac{n(n-1)}{2}} {n+j \brack j}_q.$$
\end{remark}

\begin{proposition}(Generic braid group action \cite{JK})

This element, leads to representations of the braid group on the generic Verma module $\hat{V}$ of $U_q(sl(2))$ as follows 
\begin{equation}
\begin{aligned}
\hat{\varphi}_n: B_n \rightarrow Aut_{U_q(sl(2))}\left(\hat{V}^{\otimes n}\right) \ \ \ \ \ \ \ \ \ \ \ \ \ \ \ \ \ \ \ \ \ \ \\
\ \ \ \ \ \ \ \ \ \ \ \ \ \sigma _i^{\pm 1}\rightarrow Id_V^{\otimes (i-1)}\otimes \mathscr R^{\pm 1}  \otimes Id_V^{\otimes (n-i-1)}.
\end{aligned}
\end{equation}
\end{proposition}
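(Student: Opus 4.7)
The plan is to verify that the assignment $\sigma_i \mapsto \Id^{\otimes(i-1)} \otimes \mathscr{R} \otimes \Id^{\otimes(n-i-1)}$ satisfies the two families of relations defining $B_n$: far commutativity $\sigma_i\sigma_j=\sigma_j\sigma_i$ for $|i-j|\geq 2$, and the braid relation $\sigma_i\sigma_{i+1}\sigma_i=\sigma_{i+1}\sigma_i\sigma_{i+1}$. Far commutativity is automatic because the two operators act on disjoint pairs of tensor factors, so the substantive content reduces to the braid relation on $\hat V^{\otimes 3}$.

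Before that, I would confirm that $\mathscr{R}$ is a well-defined $\Li_s$-linear automorphism of $\hat V\otimes\hat V$. The universal element $R=\sum_{n\geq 0}q^{n(n-1)/2}E^{n}\otimes F^{(n)}$ lies only in the completion $U_\qs(sl(2))\hat\otimes U_\qs(sl(2))$, but its action on a fixed $v_i\otimes v_j$ truncates at $n=i$ because $Ev_i=v_{i-1}$ forces $E^{n}v_i=0$ for $n>i$; this is precisely the closed formula recorded in Remark \ref{R:R}. Invertibility of $\mathscr{R}=C\circ R$ then follows from the fact that $C$ is invertible with inverse $C^{-1}(v_i\otimes v_j)=s^{i+j}q^{-2ij}v_j\otimes v_i$, while the Hopf-algebraic inverse $(S\otimes\Id)R$ provides an inverse to $R$ whose action on $\hat V\otimes\hat V$ is given by an analogous finite sum.

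The braid relation then reduces to the quantum Yang--Baxter equation
\[
\mathscr{R}_{12}\,\mathscr{R}_{23}\,\mathscr{R}_{12}=\mathscr{R}_{23}\,\mathscr{R}_{12}\,\mathscr{R}_{23}
\]
on $\hat V^{\otimes 3}$. This follows from two classical ingredients: the universal identity $R_{12}R_{13}R_{23}=R_{23}R_{13}R_{12}$ valid for $U_\qs(sl(2))$, and the intertwining relation $R\cdot\Delta(x)=\Delta^{\mathrm{op}}(x)\cdot R$, which makes $\mathscr{R}=C\circ R$ a morphism of $U_\qs(sl(2))$-modules and allows the diagonal flip component $C$ to be commuted past the $R$-factors in precisely the way needed to convert the universal identity into the equation above. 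Conceptually, this is exactly the statement that $(\hat V,\mathscr{R})$ is an object in the braided category of $U_\qs(sl(2))$-modules.

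The main obstacle will be the bookkeeping forced by the completion. Since $R$ is an infinite sum, one must check that triple products such as $\mathscr{R}_{12}\mathscr{R}_{23}\mathscr{R}_{12}$ are a priori well-defined on $\hat V^{\otimes 3}$ and that the identity above can be compared term by term. The saving point is that when evaluated on any basis vector $v_i\otimes v_j\otimes v_k$, each $E$-factor strictly lowers the index on which it acts, so every such triple product truncates to a finite $\Li_s$-linear combination of basis vectors. Once this truncation is set up, the braid relation becomes a finite polynomial identity in $\qs$ and $s$ that is entailed by the universal Yang--Baxter identity together with the coassociativity and intertwining properties recorded in the definition of $U_\qs(sl(2))$.
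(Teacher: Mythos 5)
The paper does not prove this proposition: it is imported directly from \cite{JK} (Jackson--Kerler), so there is no internal argument to compare against. Your sketch is the standard route one would take — and essentially the one taken in \cite{JK} itself — namely reduce far commutativity to disjointness of supports, reduce the braid relation to the Yang--Baxter identity for $\mathscr{R}$ on $\hat V^{\otimes 3}$, and show separately that the formally infinite sum defining $R$ truncates on each basis vector because $E^n v_i=0$ once $n>i$. That truncation observation is exactly what makes the proposition well posed over $\Li_s$ rather than only formally, and you are right to put it front and centre.

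One point deserves tightening. You invoke the intertwining relation $R\cdot\Delta(x)=\Delta^{\mathrm{op}}(x)\cdot R$ for the truncated sum $R=\sum_{n\ge 0}q^{n(n-1)/2}E^{n}\otimes F^{(n)}$, and similarly the universal hexagon-derived identity $R_{12}R_{13}R_{23}=R_{23}R_{13}R_{12}$. Neither of these holds for the bare sum $R$ by itself: in a quasitriangular Hopf algebra they hold for the \emph{full} $R$-matrix, which here would include a Cartan-type factor acting diagonally by $q^{2ij}$ on weight vectors. In the two-variable Verma setting that factor cannot be written as an element of $U_q(sl(2))^{\otimes 2}$ (the weights involve $s$, not just integer powers of $q$), which is precisely why it is packaged inside the weighted flip $C$. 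Consequently the correct statement is that $\mathscr{R}=C\circ R$ intertwines $\Delta$, and the braid relation for $\mathscr{R}$ must be checked either by a direct weight-space computation (as in \cite{JK}) or by a careful argument commuting the diagonal and transposition components of $C$ past the $E\otimes F^{(n)}$ terms. Your last sentence gestures in this direction, so the plan is salvageable, but as written the appeal to the naked universal identities for $R$ alone is a genuine gap that needs to be filled before the sketch becomes a proof.
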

\begin{definition}\hspace{-2mm} (Specialised action corresponding to natural parameter $\lambda=N-1$)\label{P:6}

Using the specialisation $ \eta_{q,N-1}$ and the property that $V_N$ is a $\U$-submodule of $\hat{V}_{q,N-1}$, the action $\hat{\varphi}_n$ induces the following specialised actions:
\begin{equation}
\begin{aligned}
\hat{\varphi}^{q,N-1}_n: B_n \rightarrow Aut_{U_q(sl(2))}\left(\hat{V}_{q,N-1}^{\otimes n}\right)\\
\varphi^{V_N}_n: B_n \rightarrow Aut_{U_q(sl(2))}\left(V_N^{\otimes n}\right).
\end{aligned}
\end{equation}
\end{definition}
In the sequel, we are interested in the root of unity case. First of all, the generic action $\hat{\varphi}_n$ will induce an action on the corresponding Verma module.

\begin{proposition} (Action on the Verma module at roots of unity) \label{P:7}

The specialisation $\eta_{\xi_N,\lambda}$ of the action $\hat{\varphi}_n$ leads to the following braid group action:
$$\ \ \hat{\varphi}^{\xi_N,\lambda}_{n}  :B_n \rightarrow \Aut \left( \hat{V}_{\xi_N,\lambda}^{\otimes n}\right). $$
\end{proposition}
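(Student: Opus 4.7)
\medskip

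\noindent\textbf{Proof proposal.}

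The plan is to deduce the specialised action directly from the generic action $\hat{\varphi}_n$ by pulling back along the ring map $\eta_{\xi_N,\lambda}\colon \Z[q^{\pm 1},s^{\pm 1}]\to \C$, using the fact that the base change $(-)\otimes_{\eta_{\xi_N,\lambda}}\C$ is a functor from $\Li_s$-modules to $\C$-vector spaces which commutes with tensor products. So the main task is to check that all of the structure defining $\hat{\varphi}_n$ is given by matrix data with entries in the Laurent ring $\Li_s=\Z[q^{\pm 1},s^{\pm 1}]$, so that this data can be specialised coordinate-wise and still satisfies the braid relations.

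\smallskip

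First I would read off from Remark \ref{R:R} that the matrix of $\mathscr R$ in the standard basis $\{v_i\otimes v_j\}$ of $\hat V\otimes \hat V$ has entries
$$s^{-(i+j)}\, F_{i,j,n}(q)\,\prod_{k=0}^{n-1}\bigl(s q^{-k-j}-s^{-1}q^{k+j}\bigr),$$
each of which is a Laurent polynomial in $q$ and $s$ — the only potentially delicate factor is the quantum binomial inside $F_{i,j,n}$, which is itself a polynomial in $q,q^{-1}$. In particular these coefficients lie in $\Li_s$, and the inner sum in $n$ is finite (running only up to $i$), so applying $\eta_{\xi_N,\lambda}$ entry-wise produces a well-defined $\C$-linear operator $\mathscr R|_{\xi_N,\lambda}$ on $\hat V_{\xi_N,\lambda}^{\otimes 2}$. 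Setting
$$\hat{\varphi}^{\xi_N,\lambda}_{n}(\sigma_i^{\pm 1}) := \Id^{\otimes(i-1)}\otimes \mathscr R^{\pm 1}|_{\xi_N,\lambda}\otimes \Id^{\otimes(n-i-1)}$$
then makes sense on $\hat V_{\xi_N,\lambda}^{\otimes n}$.

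\smallskip

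Next I would check that this specialised assignment really is a group homomorphism. The braid relations
$$\hat{\varphi}_n(\sigma_i)\hat{\varphi}_n(\sigma_{i+1})\hat{\varphi}_n(\sigma_i)=\hat{\varphi}_n(\sigma_{i+1})\hat{\varphi}_n(\sigma_i)\hat{\varphi}_n(\sigma_{i+1})$$
and the distant commutations are identities between $\Li_s$-matrices that hold generically; applying the ring map $\eta_{\xi_N,\lambda}$ to every entry preserves these equalities, so the relations survive at the root of unity. What still needs attention is invertibility of $\mathscr R|_{\xi_N,\lambda}$: the generic inverse $\mathscr R^{-1}=R^{-1}\circ C^{-1}$ has an analogous finite-sum expression (coming from the standard formula for the universal $R$-matrix inverse on a lowest-weight tensor product and the explicit inverse of the braiding $C$), whose coefficients again lie in $\Li_s$; specialising this formula provides a two-sided inverse for $\mathscr R|_{\xi_N,\lambda}$, and so $\hat{\varphi}^{\xi_N,\lambda}_n(\sigma_i)$ is invertible and the assignment factors through $B_n$.

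\smallskip

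The only genuine subtlety I anticipate is ensuring that nothing in the specialisation introduces a hidden denominator that vanishes at $(\xi_N,\xi_N^{\lambda})$: the factors $(sq^{-k-j}-s^{-1}q^{k+j})$ do specialise to zero for certain $i,j,n$ when $q=\xi_N$, but these appear only multiplicatively in numerators of the entries of $\mathscr R$ (and symmetrically of $\mathscr R^{-1}$), so they simply truncate the contribution of those basis elements rather than causing a divergence. Once this is observed, the proof is essentially a verification of naturality of the construction $M\mapsto M\otimes_{\eta_{\xi_N,\lambda}}\C$ with respect to the endomorphisms $\mathscr R^{\pm 1}$, and the proposition follows.
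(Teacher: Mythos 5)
The paper states Proposition~\ref{P:7} without any accompanying proof, treating it as an immediate consequence of the fact that the generic action $\hat{\varphi}_n$ is defined over $\Li_s=\Z[q^{\pm1},s^{\pm1}]$ and can therefore be specialised coordinate-wise via the ring map $\eta_{\xi_N,\lambda}$. Your proposal fills in exactly this implicit argument: you note (correctly) that the use of divided powers $F^{(n)}$ is precisely what keeps the entries of $\mathscr R$ --- including the quantum binomial inside $F_{i,j,n}$ --- in $\Li_s$ with no denominators, so that the braid relations, being $\Li_s$-matrix identities, pass to the quotient, and that invertibility of $\mathscr R|_{\eta_{\xi_N,\lambda}}$ follows because $\mathscr R^{-1}$ is likewise given by coefficients in $\Li_s$ (alternatively: $\mathscr R$ is block-triangular on each finite-dimensional weight space of $\hat V\otimes\hat V$ with unit diagonal entries $s^{-(i+j)}q^{2ij}$). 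Your remark that the vanishing of factors $(sq^{-k-j}-s^{-1}q^{k+j})$ at $q=\xi_N$ merely truncates contributions rather than creating a singularity is also the right observation --- it is what makes Lemma~\ref{L:2}, the genuinely nontrivial claim, work afterwards. One small slip of convention: the paper's $\hat V$ is a \emph{highest}-weight Verma module (it is $v_0$ that is annihilated by $E$), not a lowest-weight one; this does not affect the substance of your argument. In short, this is essentially the same approach the paper intends, written out in full.
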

\begin{remark}
Having in mind the definition of quantum invariants at roots of unity, we will want to consider braid group actions on tensor powers of $U_{\lambda}$. The subtlety is that at roots of unity, due to the choice of basis, we do not have the property that $U_{\lambda}$ is a submodule of $\hat{V}_{\xi_N,\lambda}$ over the quantum group. 

This can be corrected by a change of basis, but for our purpose, we will work with this version. In the sequel we will see an important property which ensures that the specialised $R$-matrix at roots of unity leads to a well defined action onto $U_{\lambda}^{\otimes n}$, which commutes with the inclusion into the Verma module. More specifically, we have the following.
\end{remark}
\begin{lemma}\label{L:2} (Action onto the finite part, at roots of unity)
  
 The action $\hat{\varphi}^{\xi_N,\lambda}_{n}$ on $\hat{V}_{\xi_N,\lambda}^{\otimes n}$ preserves the vector subspace $U_{\lambda}^{\otimes n}$ and the following diagram commutes:
\begin{center}
\begin{tikzpicture}
[x=1.2mm,y=1.4mm]

\node (b1)  [color=blue]             at (0,0)    {$U_{\lambda}^{\otimes n}$};
\node (t1) [color=black] at (30,0)   {$\hat{V}_{\xi_N,\lambda}^{ \otimes n}$};
\node (b2) [color=blue] at (0,9)  {$\color{blue}B_n$};
\node (t2)  [color=black]             at (30,9)    {$B_n$};
\node (t2')  [color=black]             at (38,5)    {$\hat{\varphi}^{\xi_N,\lambda}_{n}$};

\node (d2) [color=black] at (15,0)   {$\subseteq$};
\node (d2) [color=black] at (0,5)   {$\color{blue}\curvearrowright$};
\node (d2) [color=black] at (30,5)   {$\curvearrowleft$};
\node (d2) [color=black] at (15,5)   {$\equiv$};
\end{tikzpicture}
\end{center} 
\end{lemma}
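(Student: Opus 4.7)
The plan is to reduce the statement to a single local computation on two tensor factors. Since $\hat{\varphi}^{\xi_N,\lambda}_n$ is generated by the operators $\mathrm{Id}^{\otimes(i-1)} \otimes \mathscr{R}|_{\xi_N,\lambda} \otimes \mathrm{Id}^{\otimes(n-i-1)}$ (with $\mathscr{R}$ the specialisation of the element from Definition \ref{P:RT}), it suffices to show that the specialised operator
$$\mathscr{R}|_{\xi_N,\lambda}:\hat{V}_{\xi_N,\lambda}\otimes\hat{V}_{\xi_N,\lambda}\longrightarrow\hat{V}_{\xi_N,\lambda}\otimes\hat{V}_{\xi_N,\lambda}$$
preserves $U_\lambda\otimes U_\lambda$. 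Once this is established the diagram commutes tautologically, because the action of $B_n$ on $U_\lambda^{\otimes n}$ is by construction the restriction of the action on $\hat{V}_{\xi_N,\lambda}^{\otimes n}$.

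The key step is to exploit the explicit formula from Remark \ref{R:R}:
$$\mathscr{R}(v_i\otimes v_j)= s^{-(i+j)} \sum_{n=0}^{i} F_{i,j,n}(q)\prod_{k=0}^{n-1}(sq^{-k-j}-s^{-1}q^{k+j})\, v_{j+n}\otimes v_{i-n},$$
with $F_{i,j,n}(q)=q^{2(i-n)(j+n)}q^{n(n-1)/2}{n+j\brack j}_q$. Fix $i,j\in\{0,\dots,N-1\}$. The output vector $v_{j+n}\otimes v_{i-n}$ belongs to $U_\lambda\otimes U_\lambda$ exactly when $j+n\le N-1$; so I must show that each coefficient of $v_{j+n}\otimes v_{i-n}$ with $j+n\ge N$ vanishes after the specialisation $q\mapsto\xi_N$, $s\mapsto\xi_N^\lambda$.

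The vanishing is a direct consequence of the standard fact that $[k]_{\xi_N}=0$ precisely when $N\mid k$. Indeed, if $j+n\ge N$ while $0\le n,j\le N-1$, then $N\le n+j\le 2N-2$, so among the factors of $[n+j]_{\xi_N}!=[1]_{\xi_N}\cdots[n+j]_{\xi_N}$ the one indexed by $N$ vanishes, while neither $[n]_{\xi_N}!$ nor $[j]_{\xi_N}!$ contains a vanishing factor. Thus
$${n+j\brack j}_{\xi_N}=\frac{[n+j]_{\xi_N}!}{[n]_{\xi_N}!\,[j]_{\xi_N}!}=0,$$
which forces $F_{i,j,n}(\xi_N)=0$ and hence kills the corresponding term in the specialised $\mathscr{R}$-sum. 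Therefore $\mathscr{R}|_{\xi_N,\lambda}$ sends $v_i\otimes v_j$ to a linear combination of $v_{j+n}\otimes v_{i-n}$ with $n\le\min(i,N-1-j)$, all of which lie in $U_\lambda\otimes U_\lambda$. The analogous argument applies to $\mathscr{R}^{-1}$ (equivalently, invoke that the braid action is by automorphisms), completing the stability claim and hence the commutativity of the diagram.

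The only mildly delicate point is to confirm that no additional contribution arises from the prefactor $\prod_{k=0}^{n-1}(sq^{-k-j}-s^{-1}q^{k+j})$; but since this factor is independent of ${n+j\brack j}_q$ and is finite at the specialisation, the vanishing of the quantum binomial alone is enough. No cancellation or $0/0$ phenomenon occurs because ${n+j\brack j}_q$ is understood as a polynomial in $q$ (not as a formal quotient), so its value at $q=\xi_N$ is literally zero in the stated range.
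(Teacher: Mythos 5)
Your proof is correct and takes essentially the same approach as the paper: reduce to the two-factor case, use the explicit $\mathscr R$-formula from Remark \ref{R:R}, and observe that when $j+n\geq N$ the quantum binomial ${n+j\brack j}_{\xi_N}$ vanishes because $[N]_{\xi_N}=0$ appears in the numerator while $[n]_{\xi_N}!$ is nonzero (since $n\leq i\leq N-1$). Your extra remarks on the polynomial interpretation of the quantum binomial and on handling $\mathscr R^{-1}$ via finite-dimensionality are sound and slightly more careful than the paper's write-up, but do not change the argument.
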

\begin{proof}
This special property follows from the explicit form of the coefficients of the $R$-matrix and the remark that the tensor powers $E^{\otimes n}$ act non-zero onto vectors from the finite part just if $n \leq N-1$. In this situation, the main point will be that the coefficients coming from the divided powers $F^{(n)}$ which would jump over the index $N$ on the second component, will contain $[N]_{\xi_N}$ which vanishes due to the root of unity. In the sequel, we explain this idea in details.

Having in mind the definition of the action $\hat{\varphi}^{\xi_N,\lambda}_{n} $, it is enough prove that:
\begin{center}
\begin{tikzpicture}
[x=1.2mm,y=1.4mm]

\node (b1)  [color=blue]             at (0,0)    {$U_{\lambda}^{\otimes 2}$};
\node (t1) [color=black] at (30,0)   {$\hat{V}_{\xi_N,\lambda}^{ \otimes 2}$};
\node (b2) [color=blue] at (0,9)  {$\color{blue}B_2$};
\node (t2)  [color=black]             at (30,9)    {$B_2$};
\node (t2')  [color=black]             at (38,5)    {$\mathscr R|_{\eta_{\xi_N,\lambda}}$};

\node (d2) [color=black] at (15,0)   {$\subseteq$};
\node (d2) [color=black] at (0,5)   {$\color{blue}\curvearrowright$};
\node (d2) [color=black] at (30,5)   {$\curvearrowleft$};
\node (d2) [color=black] at (15,5)   {$\equiv$};
\end{tikzpicture}
\end{center}

More precisely, we want to show that:
 $$
 \mathscr{R}|_{\eta_{\xi_N,\lambda}}(v_i\otimes v_j) \in U_{\lambda}^{\otimes 2},  \ \ \ \forall i,j \in \{0,...,N-1\}.$$
Let us fix $i,j \in \{0,...,N-1\}.$ Using the formula \ref{R:R} for the $R$-matrix, together with specialisation $\eta_{\xi_N,\lambda}$ we obtain:
\begin{equation}\label{eq:5}   
\begin{aligned}
 \mathscr{R}|_{\eta_{\xi_N,\lambda}}(v_i\otimes v_j) = {\xi_N}^{-(i+j)\lambda} \sum_{n=0}^{i} & F_{i,j,n}(\xi_N) \cdot \\
 & \cdot \prod_{k=0}^{n-1}\{\lambda-(k+j)\}_{\xi_N} \ v_{j+n}\otimes v_{i-n}. 
\end{aligned}
 \end{equation}
Looking at the precise coefficients, we notice that the specialisation leads to the following:
$$F_{i,j,n}(\xi_N)={\xi_N}^{2(i-n)(j+n)}{\xi_N}^{\frac{n(n-1)}{2}}\cdot \frac{[j+1]_{\xi_N}\cdot...\cdot [j+n]_{\xi_N}}{[n]_{\xi_N}!}.$$
Now, using that in the equation \ref{eq:5} we have $n\leq i$, it follows that $n \leq N-1$ as well. This ensures that $$[n]_{\xi_N}! \neq 0.$$ On the other hand, we investigate the coefficients from the denominators. If the index $j+n\geq N$, then we remark that $[N]_{\xi_N}=0$ occurs as a factor in the product from the denominator of $F_{i,j,n}(\xi_N)$. This shows that:
$$F_{i,j,n}(\xi_N)=0 \text {   if   } i\leq N-1 \text { and } j+n\geq N.$$
The previous equation shows exactly that the $\mathscr R$-action preserves the module $U_{\lambda}^{\otimes 2}$ inside the specialised Verma module and concludes the proof. 
\end{proof}

\begin{definition} (Braid group action at roots of unity)\label{D:11}

Using the previous Lemma, we conclude that $\hat{\varphi}^{\xi_N,\lambda}_{n}$ leads to an induced braid group representation on the tensor powers of $U_{\lambda}$, which we denote by:
$$\ \ \varphi^{U_{\lambda}}_{n}: B_n \rightarrow \Aut_{\C}( U_{\lambda}^{\otimes n}). $$
\end{definition}
\begin{notation}\label{P:2}  
 1) The category of finite dimensional representations of ${\U}$ has the following dualities:
$$\forall \ V_N \in Rep^{\text{f. dim}}_{\U}$$ 
\begin{align}\label{E:DualityForCat}
\tcoev_{V_N} :\, & \Li \rightarrow V_N\otimes V_N^{*} \text{ is given by } 1 \mapsto
  \sum v_j\otimes v_j^*,\notag
  \\
 \tev_{V_N}:\, & V_N^*\otimes V_N\rightarrow \Li \text{ is given by } f\otimes w
  \mapsto f(w),\notag
  \\
\coev_{V_N}:\, & \Li \rightarrow V_N^*\otimes V_N \text{ is given by } 1 \mapsto \sum
   v_j^*\otimes K^{-1} v_j,
  \\
\ev_{V_N}:\, & V_N\otimes V_N^{*}\rightarrow \Li \text{ is given by } v\otimes f
  \mapsto  f(K v),\notag
\end{align}
for $\{v_j\}$ a basis of $V_N$ and $\{v_j^*\}$ the dual basis
of ${V_N}^*$.\\

2) For the root of unity case, we will use the following maps: 
\begin{align} 
 \tcoev_{U_{\lambda}} :\, & \C\rightarrow U_{\lambda}\otimes U_{\lambda}^{*} \text{ is given by } 1 \mapsto
  \sum v_j\otimes v_j^*,\notag
  \\
 \tev_{U_{\lambda}}:\, & U_{\lambda}^*\otimes U_{\lambda}\rightarrow \C \text{ is given by } f\otimes w
  \mapsto f(w),\notag
  \\
\coev_{U_{\lambda}}:\, & \C \rightarrow U_{\lambda}^*\otimes U_{\lambda} \text{ is given by } 1 \mapsto \sum
   v_j^*\otimes K^{N-1}v_j,
  \\
\ev_{U_\lambda}:\, & U_{\lambda}\otimes U_{\lambda}^{*}\rightarrow \C \text{ is given by } v\otimes f
  \mapsto  f(K^{-N+1}v),\notag
\end{align}
 \end{notation}

\subsection{Quantum representations on weight spaces}\label{S:quan} 
In this part, we consider certain subspaces in tensor powers of $U_q(sl(2))$-representations, prescribed by the $K$-action. They will be important in the sequel because they carry homological information.
\begin{definition}(Weight spaces)\\\label{D:2}  
{\bf 1) Generic case ($q$ and $s$ parameters) }\\
The $n^{th}$ weight space of the generic Verma module $\hat{V}$
corresponding to the weight $m$ is given by:
\begin{equation}
\hat{V}_{n,m}:= \{v \in \hat V^{\otimes n} \mid Kv=s^nq^{-2m}v \}.
\end{equation}
{\bf 2) The case ($q$ generic, $\lambda=N-1 \in \N$)}\\
The weight space of $ \hat{V}^{\otimes n}_{q,N-1}$ corresponding to the weight $m$ is:
\begin{equation}
 \hat{V}^{q,N-1}_{n,m}:=\{v\in \hat{V}^{\otimes n}_{q,N-1} \mid Kv=q^{n(N-1)-2m}v \}.
\end{equation}
The weight space for the finite dimensional representation $V^{\otimes n}_{N}$ of weight $m$ is:
\begin{equation}
V^{q,N-1}_{n,m}:=\{v\in V^{\otimes n}_{N} \mid Kv=q^{n(N-1)-2m}v \}.
\end{equation}
\end{definition}
\begin{remark}
Since the generic braid group action commutes with the action of the quantum group, the representation $\hat{\varphi}_n$ induces a well defined action on the generic weight spaces $$\hat{\varphi}_{n,m}:B_n \rightarrow \Aut( \hat{V}_{n,m})$$
called the generic quantum representation on weight spaces of the Verma module.
\end{remark}
\begin{notation}
We will use the following indexing set:
$$E_{n,m}=\{e=(e_1,...,e_{n})\in \N^{n} \mid e_1+...+e_{n}=m \}.$$
\end{notation}
\begin{remark}(Basis for weight spaces)\\
A basis for the generic weight space is given by:
$$\mathscr{B}_{\hat{V}_{n,m}}=\{v_e:= v_{e_1}\otimes... \otimes v_{e_n}| e \in E_{n,m}\}.
$$
\end{remark}

\begin{remark} \label{P:hww} Similarly, using the specialisation with generic $q$, we get induced braid group actions as follows:
$$ \ \ \ \ \ \ \ 2) \  a) \ \hat{\varphi}^{q,N-1}_{n,m}  :B_n \rightarrow \Aut \left( \hat{V}^{q,N-1}_{n,m} \right) \text{ is a well defined action induced by } \hat{\varphi}^{q,N-1}_n.$$
$$ \ 2) \  b) \ \varphi^{q,N-1}_{n,m}:B_n \rightarrow \Aut \left( V^{q,N-1}_{n,m} \right) \text{ is induced by } \varphi^{V_N}_n. \hspace{32mm}$$ 
This action is called the quantum representation on weight spaces corresponding to the finite dimensional module.
\end{remark}
This allows us to define quantum representations for the root of unity case as follows:
\begin{definition}
{\bf 3) The case with $q=\xi_N$ root of unity and $\lambda \in \C$}
 
The weight space of $ \hat{V}^{\otimes n}_{\xi_N,\lambda}$ of weight $m$ is the following:
\begin{equation}
 \hat{V}^{\xi_N,\lambda}_{n,m}:=\hat{V}_{n,m}|_{\eta_{\xi_N,\lambda}} \subseteq \hat{V}^{\otimes n}_{\xi_N,\lambda}.
 \end{equation}

The weight space of the finite module $U^{\otimes n}_{\lambda}$  corresponding to the weight $m$ is:
\begin{equation}
 V^{\xi_N,\lambda}_{n,m}:= \hat{V}^{\xi_N,\lambda}_{n,m} \cap U^{\otimes n}_{\lambda} \subseteq U^{\otimes n}_{\lambda}. 
  \end{equation}

\end{definition}
\begin{remark}\label{R:8}
The braid group action from the Lemma \ref{L:2}, induces well defined braid group actions at roots of unity as follows.
$$3) \ a) \ \hat{\varphi}^{\xi_N,\lambda}_{n,m}  :B_n \rightarrow \Aut( \hat{V}^{\xi_N,\lambda}_{n,m}) \text{ is a well defined action induced by }\hat{\varphi}_n^{\xi_N,\lambda}.$$
$$  3) \ b) \ \varphi^{\xi_N,\lambda}_{n,m}:B_n \rightarrow \Aut( V^{\xi_N,\lambda}_{n,m})  \text{ is induced by } \varphi_n^{U_{\lambda}}.\hspace{35mm}$$ 
These are well defined using Lemma \ref{L:2} together with the fact that the braid group action preserves the weights of the vectors. We refer to the second action as the quantum representation on weight spaces corresponding to the finite dimensional module at root of unity.
\end{remark}
\begin{center}
\begin{tabular} { | l | c | c | c | l | } 
 \hline
Specialisation & Representation & \ \  Action \ \  & Weight space & Weight action\\    
\hline                                    
$ {\bf 1)} (q,s\text{ param})$ & $\hat{V}^{\otimes n}$  &  $ \hat{\varphi}_n $ & $ \hat{V}_{n,m}$ & $\hat{\varphi}_{n,m}$\\
\hline                                    
$ { \bf 2)} \ (q, \lambda=N-1)$ & $\hat{V}^{\otimes n}_{q,N-1}$  &  $\hat{\varphi}^{q,N-1}_{n}$ & $ \hat{V}^{q,N-1}_{n,m}$ & $\hat{\varphi}^{q,N-1}_{n,m}$\\
\hdashline                                    
 \ \ \ \ \ \ \ $\eta_{q, N-1}$ &$V_N^{\otimes n}$& $\varphi^{V_N}_{n}$&$ V^{q,N-1}_{n,m}$& $\varphi^{q,N-1}_{n,m}$\\
\hline
$ { \bf 3)} (q=\xi_N, \lambda \in \C)$ & $\hat{V}^{\otimes n}_{\xi_N,\lambda}$  &  $ \hat{\varphi}^{\xi_N, \lambda}_{n}$ & $ \hat{V}_{n,m}^{\xi_N,\lambda}$ & $\hat{\varphi}^{\xi_N,\lambda}_{n,m}$\\
\hdashline                                    
 \ \ \ \ \ \ \ $\eta_{\xi_N, \lambda}$ &$U_{\lambda}^{\otimes n}$&$ \varphi^{U_\lambda}_{n}$&$ V_{n,m}^{\xi_N,\lambda}$ &$\varphi^{\xi_N,\lambda}_{n,m}$\\
\hline
\end{tabular}
\end{center}

\subsection{Coloured Jones polynomials as renormalised invariants}
The coloured Jones polynomials $\{ J_N(L,q )| N \in \N \}$ form a sequence of invariants constructed from the finite dimensional representations $\{V_N\}_{N \in \N}$ described above, using the Reshetikhin-Turaev method.  
In the sequel, we denote by $w:B_n\rightarrow \Z$ the map given by the abelianisation.
\begin{notation}(The trivial braid)
For the next definitions, we denote by:\\
1) $\mathbb I_n$ the trivial braid in $B_n$ with all strands oriented upwards.\\
2) $\bar{\mathbb I}_n$ the trivial braid in $B_n$ with all strands oriented downwards.
\end{notation}
Further on, we refer to \cite{Cr2}- Section 2 for the details of the Reshetikhin-Turaev construction as well as the importance of the orientations of the strands. Roughly speaking if we have $\bar{\mathbb I}_n$, then we have the identity of the dual representation and $\mathbb I_n$ corresponds to the initial representation ($V_N^{\otimes n}$ or $U_{\lambda}^{\otimes n}$ for the root of unity case). However, as we will see, we will add extra morphisms such that we have the action just on the tensor power of these representations and not their duals and then use the notations that we have discussed in this section. 

 \begin{proposition}(\cite{Ito3},\cite{Cr2} Coloured Jones polynomial from a braid presentation)\label{P:1}
Let us fix $N \in \N$.  Consider $L$ to be an oriented knot and $\beta \in B_n$ such that $L=\hat{\beta}$ (braid closure). Then, the Reshetikhin-Turaev construction leads to the following formula for the $N^{th}$ coloured Jones invariant:
$$J_N(L,q)=\frac{1}{[N]_q}q^{-(N-1)w(\beta_n)}\left( \ev_{V_N^{\otimes n}} \  \circ \ \varphi^{V_{N}}_{2n} (\beta_{n} \cup \bar{\mathbb I}_n ) \  \circ \ {\tcoev}_{V_N^{\otimes n}} \right) (1).$$
\end{proposition}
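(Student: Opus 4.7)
The plan is to recognize the right-hand side as the Reshetikhin-Turaev (RT) functor evaluated on an explicit tangle presentation of $L$, followed by the standard framing and normalization corrections. Writing $L = \hat\beta_n$, one first presents $L$ as a $(0,0)$-tangle as follows: at the bottom, create $n$ oriented pairs of strands via $\tcoev_{V_N^{\otimes n}}$, with the left $n$ oriented upward (colored by $V_N$) and the right $n$ oriented downward (colored by $V_N^*$); in the middle, apply $\beta_n$ to the left $n$ strands and the trivial downward braid $\bar{\mathbb I}_n$ to the right $n$ strands, producing the operator $\varphi^{V_N}_{2n}(\beta_n \cup \bar{\mathbb I}_n)$; at the top, close the diagram via $\ev_{V_N^{\otimes n}}$.

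By functoriality, the RT functor sends this tangle diagram to the composite $(\ev_{V_N^{\otimes n}} \circ \varphi^{V_N}_{2n}(\beta_n \cup \bar{\mathbb I}_n) \circ \tcoev_{V_N^{\otimes n}})(1)$, yielding the RT invariant of $L$ equipped with its blackboard framing. Two corrections then convert this into the coloured Jones polynomial. First, the self-linking number of the blackboard framing of a braid closure equals the writhe $w(\beta_n)$, and the ribbon twist on $V_N$ contributes a factor of $q^{(N-1) w(\beta_n)}$; multiplying by $q^{-(N-1)w(\beta_n)}$ removes this framing dependence. Second, the RT invariant of the unknot colored by $V_N$ equals the quantum dimension $[N]_q$, whereas the convention $J_N(\text{unknot}) = 1$ is used, which forces division by $[N]_q$.

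The main subtlety --- really the only one --- is carefully tracking orientations in the tangle diagram and verifying that the ``dual side'' (the strands colored by $V_N^*$) matches the choice of duality maps in Notation \ref{P:2} and the convention encoded by $\bar{\mathbb I}_n$. These bookkeeping steps are standard, and are carried out in detail in \cite{Ito3} and \cite{Cr2}, from which the proposition is adapted; the proof here reduces to citing that unpacking.
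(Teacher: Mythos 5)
The paper gives no proof of this proposition, citing it to \cite{Ito3} and \cite{Cr2}, so there is no in-paper argument to compare against; your sketch follows the standard Reshetikhin--Turaev reasoning that those references carry out. The structure (present $\hat\beta_n$ as $\tcoev$, then braid, then $\ev$; correct for framing; renormalise by the unknot value $[N]_q$) is right.

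However, there is a real imprecision in the framing step. You assert that ``the ribbon twist on $V_N$ contributes a factor of $q^{(N-1)w(\beta_n)}$,'' but the twist $\theta_{V_N}$ for $U_q(sl_2)$ acts on the $N$-dimensional irreducible by $q^{\pm (N^2-1)/2}$, not by $q^{\pm(N-1)}$. The exponent $(N-1)$ appearing in the statement does \emph{not} come from the ribbon twist alone. It is the net effect of two rescalings: the paper's braiding $\mathscr R = C\circ R$ with $C(v_i\otimes v_j)=s^{-(i+j)}q^{2ij}\,v_j\otimes v_i$ is normalised so that $\mathscr R(v_0\otimes v_0)=v_0\otimes v_0$, which means $\mathscr R$ differs from the quasitriangular ribbon $R$-matrix on $V_N\otimes V_N$ (after $s=q^{N-1}$) by the scalar $q^{-(N-1)^2/2}$. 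Combining this per-crossing rescaling with the genuine framing twist $\theta^{-w}=q^{-(N^2-1)w/2}$ gives $q^{w[(N-1)^2-(N^2-1)]/2}=q^{-(N-1)w}$, which is what appears in the proposition. As written, your argument would produce the exponent $-(N^2-1)w/2$, which is wrong for $N>1$; so the step ``multiplying by $q^{-(N-1)w(\beta_n)}$ removes this framing dependence'' needs the extra observation about the normalisation of $\mathscr R$ to actually close.

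Two smaller points worth flagging for precision: the domain of $\varphi^{V_N}_{2n}$ is $V_N^{\otimes 2n}$, so the identification of the ``downward'' strands $\bar{\mathbb I}_n$ with $V_N^*$ is implicit (it is handled via the duality data of Notation \ref{P:2}, not by literally colouring those strands by $V_N^*$); and the normalisation $J_N(\mathrm{unknot})=1$ together with invariance under Markov moves (which itself relies on the $q^{-(N-1)w}$ factor for stabilisation) is what pins down the formula, so the division by $[N]_q$ should be tied to the unknot value of the \emph{unnormalised} RT invariant rather than asserted independently.
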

\begin{notation}(Projection map given by the highest weight vector)

Let us denote by $r: V_N\rightarrow  \langle v_0 \rangle \ _{\Z[q^{\pm1}]}$ the projection onto the subspace generated by the vector $v_0$ and $c: \langle v_0 \rangle_{\Z[q^{\pm1}]}\rightarrow \Z[q^{\pm1}]$ the map given by the coefficient of the vector $v_0$. Then, we denote their composition by:
\begin{equation}
\begin{cases}
\pi: V_N\rightarrow \Z[q^{\pm 1}]\\
\pi=c \circ r.
\end{cases}
\end{equation}
\end{notation}

\begin{corollary}(Coloured Jones polynomial as a modified invariant)\label{defCJ}\\
We can see the coloured Jones polynomials by cutting a strand as follows:
\begin{equation}\label{eq:J}
\begin{aligned}
J_N(L,q)=&q^{-(N-1)w(\beta_n)} \ \pi \circ \\
&\circ \left((Id\otimes \ev_{V_N^{\otimes {n-1}}} ) \circ \varphi^{V_{N}}_{2n-1} (\beta_{n} \cup \bar{\mathbb I}_{n-1} ) \circ (\Id \otimes {\tcoev}_{V_N^{\otimes {n-1}}}) \right) (v_0).
\end{aligned}
\end{equation}
\end{corollary}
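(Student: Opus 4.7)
The plan is to deduce the cut-strand formula from the closed-braid formula of Proposition \ref{P:1} by combining Schur's lemma with the identification between the full closure and the quantum trace. First, I would introduce the endomorphism
\begin{equation*}
M \ := \ (\Id \otimes \ev_{V_N^{\otimes (n-1)}}) \circ \varphi^{V_N}_{2n-1}(\beta_n \cup \bar{\mathbb I}_{n-1}) \circ (\Id \otimes \tcoev_{V_N^{\otimes (n-1)}}) \ : \ V_N \longrightarrow V_N
\end{equation*}
obtained by performing the partial closure of all but one strand. Each factor (the $\mathscr R$-operators appearing in $\varphi^{V_N}_{2n-1}$, together with the evaluation and coevaluation maps) is a morphism of $\U$-modules, so $M$ is a $\U$-linear endomorphism of $V_N$. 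Since $V_N$ is the $N$-dimensional simple $\U$-module (Lemma \ref{inclusion}), its endomorphism ring over $\Li$ is generated by the identity, and Schur's lemma forces $M = c \cdot \Id_{V_N}$ for a unique scalar $c \in \Li$.

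Next, I would match this scalar with the quantity computed in Proposition \ref{P:1}. Topologically, closing the remaining open strand of the $(1,1)$-tangle associated to $M$ reproduces the full closure $\hat\beta_n$, and categorically this operation is the quantum trace $\ev_{V_N} \circ (M \otimes \Id_{V_N^*}) \circ \tcoev_{V_N}$, which equals $c \cdot \qdim(V_N)$. A short calculation from the definitions in Notation \ref{P:2}, using $K v_j = q^{N-1-2j} v_j$ on $V_N$, yields
\begin{equation*}
\qdim(V_N) \ = \ \sum_{j=0}^{N-1} q^{N-1-2j} \ = \ [N]_q.
\end{equation*}
Substituting into Proposition \ref{P:1}, the two factors of $[N]_q$ cancel and we obtain $J_N(L,q) = q^{-(N-1)w(\beta_n)} \cdot c$.

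Finally, the scalar $c$ is extracted by testing $M$ on the highest weight vector: the relation $M(v_0) = c \cdot v_0$ gives $\pi(M(v_0)) = c$, and combining this with the previous identity produces formula \eqref{eq:J} exactly. The main technical point to verify carefully is the correspondence between the topological closure of the last strand of $\beta_n \cup \bar{\mathbb I}_{n-1}$ and the $K$-twisted categorical quantum trace; this is a routine diagrammatic check once the orientation conventions encoded in the two trivial braids $\bar{\mathbb I}_n$ and $\bar{\mathbb I}_{n-1}$ are aligned with the duality maps $\ev_{V_N}$ and $\tcoev_{V_N}$ of Notation \ref{P:2}.
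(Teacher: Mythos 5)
Your proof is correct and takes essentially the same route as the paper: the paper's own proof of this corollary is a one-sentence citation of Proposition \ref{P:1}, the Geer--Patureau--Turaev normalisation procedure, and Reshetikhin--Turaev functoriality, and your argument simply unpacks that citation --- cut a strand to form the $(1,1)$-tangle endomorphism $M$, invoke Schur's lemma on the simple $\U$-module $V_N$ to get $M=c\cdot\Id$, identify the full closure with $c\cdot\qdim(V_N)=c\cdot[N]_q$, and cancel against the $1/[N]_q$ in Proposition \ref{P:1}.
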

\begin{proof}
This comes from the formula presented in Proposition \ref{P:1} together with the normalisation procedure developed in \cite{GP} and the properties of the Reshetikhin-Turaev functorial construction.
\end{proof}
\subsection{Coloured Alexander Polynomials}
The quantum group $U_{\xi_{N}}(sl(2))$ at roots of unity of order $2N$ together with the modules $U_{\lambda}$ lead to the non-semisimple quantum invariant called coloured Alexander polynomials ( or ADO \cite{ADO}), as follows.
\begin{notation}
Let $ r^{\lambda}: U_{\lambda}\rightarrow \langle v_0 \rangle _{\C}$ the projection onto the subspace generated by the vector $v_0$ and $C: \langle v_0\rangle \  _{\C}\rightarrow \C$ the map which is given by the coefficients of the generator $v_0$. We denote their composition as follows:
\begin{equation}
\begin{cases}
p: U_{\lambda}\rightarrow \C\\
p=C \circ r^{\lambda}.
\end{cases}
\end{equation}
\end{notation}

\begin{proposition}(The ADO invariant from a braid presentation)\\{\label{defADO}}
Let $L$ be an oriented knot. Consider $\beta_n \in B_n $ such that $L=\hat{\beta_n}$.
Then, the $N^{th}$ coloured Alexander invariant of $L$ can be expressed as follows:
\begin{equation}\label{eq:A}
\begin{aligned}
\Phi_{N}(L,\lambda)=&{\xi_N}^{(N-1)\lambda w(\beta_n)} \ p \ \circ \\
&\left((Id\otimes \ev_{U_{\lambda}^{\otimes {n-1}}} ) \circ \varphi^{U_{\lambda}}_{2n-1} (\beta_{n} \cup \bar{\mathbb I}_{n-1} ) \circ (\Id \otimes {\tcoev}_{U_{\lambda}^{\otimes {n-1}}}) \right) (v_0).
\end{aligned}
\end{equation}
\end{proposition}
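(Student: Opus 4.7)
The plan is to mirror the derivation of the coloured Jones formula in Corollary~\ref{defCJ}, working at the root of unity $\xi_N$ with the module $U_{\lambda}$ in place of $V_N$, and accounting for the fact that $\qdim(U_{\lambda}) = 0$ at $\xi_N$ forces a renormalisation. First I would begin from the Reshetikhin–Turaev functor associated to the category of weight $\U_{\xi_N}$-modules: the braid group action $\varphi^{U_{\lambda}}_n$ from Definition~\ref{D:11} gives $\varphi^{U_{\lambda}}_n(\beta_n) \in \End(U_{\lambda}^{\otimes n})$, and the natural cups and caps of Notation~\ref{P:2} case~2) close the diagram into a scalar. Because $\qdim(U_{\lambda}) = 0$, the unrenormalised scalar produced by the full closure vanishes identically, so no invariant can be extracted directly by a quantum trace.

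Second, I would apply the Geer–Patureau-Mirand renormalisation of~\cite{GP} by cutting one strand: graphically this replaces the full closure by the tangle $\beta_n \cup \bar{\mathbb I}_{n-1}$, in which the distinguished top-left strand stays open. The composition
\[ f = (\Id \otimes \ev_{U_{\lambda}^{\otimes n-1}}) \circ \varphi^{U_{\lambda}}_{2n-1}(\beta_n \cup \bar{\mathbb I}_{n-1}) \circ (\Id \otimes \tcoev_{U_{\lambda}^{\otimes n-1}}) \]
is then an endomorphism of $U_{\lambda}$. For generic $\lambda$ the module $U_{\lambda}$ is irreducible and projective in the category of weight $\U_{\xi_N}$-modules, so Schur's lemma forces $f = c \cdot \Id_{U_{\lambda}}$ for some scalar $c \in \C$. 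Evaluating $f$ on the highest-weight vector $v_0$ and projecting via $p$ recovers $c$, which is by construction the modified-trace value attached to the cut presentation of $L$.

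Third, I would account for the framing factor ${\xi_N}^{(N-1)\lambda w(\beta_n)}$. Presenting $L$ as the closure of $\beta_n$ equips it with the blackboard framing, whose total twist equals $w(\beta_n)$. Each Reidemeister~I move on a strand coloured by $U_{\lambda}$ multiplies the RT output by the eigenvalue of the ribbon element on $U_{\lambda}$, which a direct computation from the action of $K$ in Definition~\ref{D:2} case~3) together with the ribbon formula for $U_q(sl(2))$ shows to act on $v_0$ as ${\xi_N}^{-(N-1)\lambda}$. Multiplying the cut-strand scalar by ${\xi_N}^{(N-1)\lambda w(\beta_n)}$ therefore cancels this framing contribution and produces a genuine invariant of the oriented knot $L$, matching the formula in the statement.

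The step I expect to be the main obstacle is justifying that $c$ is independent of which strand is cut, which is what makes the right-hand side depend only on $L$ and not on the particular braid presentation. This is exactly the content of the ambidexterity/modified-trace theorem of~\cite{GP} applied to the ideal of projective modules containing $U_{\lambda}$; once this cyclicity is in hand, Markov invariance reduces to the standard RT Markov moves corrected by the writhe factor above.
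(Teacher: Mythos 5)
Your proof takes a genuinely different route from the paper's. The paper does not re-derive the ADO formula: it cites \cite{Ito2}, where essentially this formula is proved for the usual (non-divided-powers) $U_{\xi_N}(sl(2))$, and then reduces to that result by observing that the $R$-matrix coefficients of Remark \ref{R:R} agree with those of \cite{Ito2}, so the divided-powers braid action $\varphi^{U_\lambda}_{2n-1}$ may be substituted (via Proposition \ref{P:7}, Lemma \ref{L:2}, Definition \ref{D:11}). You instead attempt a from-scratch derivation through the Geer--Patureau--Turaev modified-trace machinery: cut one strand, apply Schur's lemma to get a scalar, invoke ambidexterity for well-definedness, and correct by the ribbon twist.

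That route, as written, has a gap specific to this paper's setup. You invoke Schur's lemma and the ambidexterity theorem of \cite{GP} with $U_\lambda$ treated as an irreducible projective $\U_{\xi_N}$-module. But the quantum group here is the divided-powers version, and the remark immediately preceding Lemma \ref{L:2} states explicitly that, with the basis being used, $U_\lambda$ is \emph{not} a $\U_{\xi_N}$-submodule of $\hat{V}_{\xi_N,\lambda}$; Lemma \ref{L:2} only shows that the $R$-matrix action preserves the vector subspace $U_\lambda^{\otimes n}$. Consequently the map you call $f$ is merely a $\C$-linear endomorphism of $U_\lambda$, not a priori a $\U_{\xi_N}$-module morphism, so Schur's lemma does not directly force it to be a scalar multiple of the identity, and the ambidexterity statement about the ideal of projective $\U_{\xi_N}$-modules does not directly apply. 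Those arguments do hold over the usual $U_{\xi_N}(sl(2))$, where $U_\lambda$ is an honest simple projective module — the setting of \cite{Ito2} — but then you would still owe the comparison identifying the divided-powers action $\varphi^{U_\lambda}_{2n-1}$ with the one used there, which is precisely the content of the paper's proof and is absent from yours.
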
 
\begin{proof}
In \cite{Ito2} it is presented a formula for the coloured Alexander polynomial which is the same as the one from equation \eqref{eq:A} except the part concerning the braid group action. The difference occurs from the fact that in that context, it is used the usual version of the quantum group $U_{q}(sl(2))$ instead of the one with divided powers. However, the formula for the braid group action from \cite{Ito2} (equation $(2)$) is the same as the one presented in remark \ref{R:R}.  Then, the action $\hat{\varphi}_n|_{\eta_{\xi_N,\lambda}}$ is used on different bases in our case and in the definition from \cite{Ito2}. However, since these actions are closed up with the same evaluations and coevaluation, they will lead to the same result. Now, using proposition \ref{P:7} together with Lemma \ref{L:2} and definition \ref{D:11}, we conclude that we can use the braid group action $\varphi^{U_{\lambda}}_{2n-1}$, which concludes the proof.
\end{proof}
\section{Lawrence representation}\label{3}
In this part, we present a version of Lawrence representations that will be suitable for our topological models. We will use certain definitions from \cite{Martel}.

Let us fix two natural numbers $n$ and $m$. We consider $\mathscr D_n$ to be the two dimensional disc with boundary, with $n$ punctures: $$\mathscr D_n=\mathbb D^2 \setminus \{1,...,n\}.$$ Let us define the unordered configuration space in this punctured disc, by taking the product and making the quotient with respect to the $m^{th}$ symmetric group $Sym_m$:
$$C_{n,m}:=\{ (x_1,...,x_m)\in (\mathscr D_n)^{\times m} \mid x_i \neq x_j, \forall \ 1 \leq i < j\leq m\} /Sym_m. $$

For the sequel, we fix $d_1,..d_m \in \partial\mathscr D_n$ and ${\bf d}=(d_1,...,d_m)$ the corresponding base point in the configuration space.
\subsection{Homology of the covering space}
\begin{definition}(Local system)

Let $\rho: \pi_1(C_{n,m}) \rightarrow H_1\left( C_{n,m}\right)$ be the abelianisation map. 
For $m \geq 2$, the homology of the unordered configuration space, is known to be: 
\begin{equation*}
\begin{aligned}
H_1\left( C_{n,m}\right) \simeq & \ \ \Z^{n} \ \oplus \ \Z \ \ \\
& \langle \rho(\sigma_i) \rangle \  \langle \rho(\delta) \rangle,  \ \ \ \ \ {i\in \{1,...,n\}}.
\end{aligned}
\end{equation*}

Here, $\sigma_i\in \pi_1(C_{n,m})$ is represented by the loop in $C_{n,m}$ with $m-1$ fixed components (the base points $d_2$,...,$d_n$) and the first one going on a loop in $\mathscr D_n$ around the puncture $p_i$, as in the picture below. 

The generator $\delta \in \pi_1(C_{n,m})$ is given by a loop in the configuration space with $(m-2)$ constant points ( given by $d_3$,...,$d_{n}$) and the first two components which swap the two initial points $d_1$ and $d_2$, as in figure \ref{fig2}.  

\begin{figure}[H]
\begin{tikzpicture}
[scale=4.3/5]
\foreach \x/\y in {0/2,2/2,4/2,2/1,2.6/1,3.6/1.15} {\node at (\x,\y) [circle,fill,inner sep=1pt] {};}
\node at (0.2,2) [anchor=north east] {$1$};
\node at (2.2,2) [anchor=north east] {$i$};
\node at (4.2,2) [anchor=north east] {$n$};
\node at (3,2) [anchor=north east] {$\sigma_i$};
\node at (2.2,1) [anchor=north east] {$d_1$};
\node at (2.9,1.02) [anchor=north east] {$d_2$};
\node at (4,1.05) [anchor=north east] {$d_m$};
\node at (2.68,2.3) [anchor=north east] {$\wedge$};
\draw (2,1.8) ellipse (0.4cm and 0.8cm);
\draw (2,2) ellipse (3cm and 1cm);
\foreach \x/\y in {7/2,9/2,11/2,8.5/1,9.5/1,10.5/1.10} {\node at (\x,\y) [circle,fill,inner sep=1pt] {};}
\node at (7.2,2) [anchor=north east] {$1$};
\node at (9.2,2) [anchor=north east] {$i$};
\node at (11.2,2) [anchor=north east] {$n$};
\node at (9,1) [anchor=north east] {$d_1$};
\node at (9.7,1.02) [anchor=north east] {$d_2$};
\node at (10.9,1.05) [anchor=north east] {$d_m$};
\node at (8.5,1.7) [anchor=north east] {$\delta$};
\draw (9,2) ellipse (3cm and 1cm);
\draw (9.5,1)  arc[radius = 5mm, start angle= 0, end angle= 180];
\draw [->](9.5,1)  arc[radius = 5mm, start angle= 0, end angle= 90];
\draw (8.5,1) to[out=50,in=120] (9.5,1);
\draw [->](8.5,1) to[out=50,in=160] (9.16,1.19);
\end{tikzpicture}
\caption{}
\label{fig2}
\end{figure}
Let $p:\Z^n\oplus \Z\rightarrow \Z \oplus \Z$ be the augmentation map given by:
$$p(x_1,...,x_m,y)=(x_1+...+x_m,y).$$
Combining the two morphisms, let us consider the local system:
\begin{equation}\label{eq:23}
\begin{aligned}
\phi: \pi_1(C_{n,m}) \rightarrow \ & \Z \oplus \Z\\ 
 & \langle x \rangle \ \langle d \rangle\\
\phi= p \circ \rho. \ \ \ \ \ \ \ \ 
\end{aligned}
\end{equation}
\end{definition}
\begin{definition}(Covering of the configuration space)\label{D:12}
Let $\tilde{C}_{n,m}$ be the covering of $C_{n,m}$ corresponding to the local system $\phi$. Then, the deck transformations of this covering are given by:
$$Deck(\tilde{C}_{n,m},C_{n,m})\simeq \langle x \rangle \langle d \rangle .$$
\end{definition}
One of the main tools in this construction is the homology of this covering space. Let us fix a point $w \in \partial \mathscr D_n$. We will work with the part of the Borel-Moore homology of this covering which comes from the corresponding Borel-Moore homology of the base space twisted by the local system. 
In the sequel, we denote by $C_{w}$ the part of the boundary of $C_{n,m}$ represented by configurations containing the base point $w$. Also, by $\pi^{-1}({w})$ we denote the part of the boundary of $\tilde{C}_{n,m}$ represented by the fiber over $C_{w}$.
\begin{proposition}(\cite{Cr2}) The braid group action arising from the mapping class group property and the action coming from Deck transformations are compatible at the homological level:
$$B_n \curvearrowright H^{\text{lf}}_m(\tilde{C}_{n,m},\pi^{-1}(x); \Z) \ (\text{ as a module over }\Z[x^{\pm1}, d^{\pm1}]).$$ 
\end{proposition}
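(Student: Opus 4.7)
The plan is to realize the braid group topologically as the mapping class group of the punctured disc relative to its boundary, lift that action to the covering, and then check that a canonical lift can be chosen so as to commute with the deck transformations.

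First I would invoke Birman's identification of $B_n$ with $MCG(\mathscr D_n,\partial \mathscr D_n)$, the group of isotopy classes of homeomorphisms of $\mathscr D_n$ fixing the boundary pointwise. Any representative $f$ of $\beta \in B_n$ permutes the punctures and acts diagonally on unordered configurations, inducing a homeomorphism $f^{(m)}\colon C_{n,m}\to C_{n,m}$. Since $f$ fixes $\partial \mathscr D_n$ pointwise, $f^{(m)}$ fixes the base point $\mathbf d\in \partial C_{n,m}$, the boundary point $w$, and the subset $C_w$ of configurations meeting $w$. Isotopic choices of $f$ produce homotopic $f^{(m)}$, so the assignment descends to the mapping class group.

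Next I would verify the local-system compatibility $\phi\circ f^{(m)}_*=\phi$ on $\pi_1(C_{n,m},\mathbf d)$. This is the step where care is required, and it is the main technical point. Writing $\phi = p\circ \rho$ as in \eqref{eq:23}, one notes that $p$ records $(\sum_i \text{winding around puncture } p_i,\ \text{total braiding})$. A braid generator permutes the punctures, so it permutes the generators $\sigma_1,\dots,\sigma_n$ of the first factor of $H_1(C_{n,m})$ and sends $\delta$ either to $\delta$ or to a conjugate of $\delta$, depending on which strands are crossed. Both operations preserve the augmentation $p$: the sum $\sigma_1+\cdots+\sigma_n$ is permutation-invariant, and the $\delta$-component is unchanged under conjugation in the abelianization. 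Hence $f^{(m)}_*(\ker\phi)\subseteq \ker \phi$. It suffices to check this on the standard Artin generators of $B_n$, each of which acts by a half-twist exchanging two consecutive punctures, so the calculation is local and finite.

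Having established that $f^{(m)}$ preserves $\ker\phi$, standard covering-space theory produces a lift $\widehat{f}\colon \tilde C_{n,m}\to \tilde C_{n,m}$, unique once we normalise it on a chosen lift $\tilde{\mathbf d}$ of $\mathbf d$ (this is possible because $\mathbf d$ is fixed by $f^{(m)}$). By the uniqueness of lifts, the normalised $\widehat{f}$ commutes with every deck transformation $\tau\in \langle x\rangle\langle d\rangle$, because $\tau\circ\widehat{f}$ and $\widehat{f}\circ\tau$ are both lifts of $f^{(m)}\circ\pi$ agreeing with $\tau$ on $\tilde{\mathbf d}$.

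Finally I would pass to homology. Since $\widehat f$ preserves the preimage $\pi^{-1}(w)$ of the boundary piece containing $w$ (the boundary is fixed pointwise), it induces an automorphism of the pair $(\tilde C_{n,m},\pi^{-1}(w))$, and hence of $H^{\mathrm{lf}}_m(\tilde C_{n,m},\pi^{-1}(w);\Z)$. Because $\widehat{f}$ and the deck action commute, the induced map is $\Z[x^{\pm 1},d^{\pm 1}]$-linear, yielding the claimed braid group action by module automorphisms; independence of the representative $f$ and functoriality in $\beta$ follow from the uniqueness of the normalised lift. The main obstacle, as noted, is the explicit verification of $\phi\circ f^{(m)}_*=\phi$ on the generators of $\pi_1(C_{n,m})$; everything else is formal from covering-space theory and the functoriality of Borel--Moore homology.
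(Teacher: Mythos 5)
Your route is correct and is the standard one for these homological braid-group actions: identify $B_n$ with the mapping class group of $(\mathscr D_n,\partial\mathscr D_n)$, verify the local-system invariance $\phi\circ f^{(m)}_*=\phi$ on Artin generators, lift to the covering, and pass to Borel--Moore homology of the pair $(\tilde C_{n,m},\pi^{-1}(w))$; this is essentially the approach of \cite{Cr2}, to which the paper defers. One step is slightly compressed. In the deck-commutation argument you assert $\widehat f(\tau\cdot\tilde{\mathbf d})=\tau\cdot\tilde{\mathbf d}$, but this does not follow from $\widehat f(\tilde{\mathbf d})=\tilde{\mathbf d}$ alone — it is equivalent to the commutation you are trying to prove. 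The correct deduction is that $\phi\circ f^{(m)}_*=\phi$ forces the automorphism of the deck group $\Z\oplus\Z\cong\pi_1(C_{n,m})/\ker\phi$ induced by $f^{(m)}$ to be the identity, whence the standard lifting lemma gives $\widehat f\circ\tau=\tau\circ\widehat f$ for every deck transformation $\tau$. Note also that the weaker containment $f^{(m)}_*(\ker\phi)\subseteq\ker\phi$ already yields existence of the lift; it is the full equality $\phi\circ f^{(m)}_*=\phi$ that secures the $\Z[x^{\pm1},d^{\pm1}]$-linearity, which is the actual content of the proposition. With that clarification your argument is complete.
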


\begin{proposition}(\cite{CrM})
There is a natural injective map, compatible with the braid group actions:
$$\iota: H^{\text{lf}}_m(C_{n,m}, C_{w}; \mathscr L_{\phi})\rightarrow H^{\text{lf}}_m(\tilde{C}_{n,m}, \pi^{-1}({w});\Z).$$
\end{proposition}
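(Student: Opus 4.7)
The plan is to construct $\iota$ directly at the level of cellular locally-finite chain complexes, then verify the construction passes to homology. Fix a CW-decomposition of $C_{n,m}$ in which $C_w$ is a subcomplex, and choose a fundamental domain for the deck action so that each open cell $\sigma$ admits a distinguished lift $\tilde\sigma \subset \tilde C_{n,m}$; then $\pi^{-1}(\sigma)$ is the disjoint union of the deck translates $g\cdot \tilde\sigma$, $g \in \Z\oplus\Z$. A locally-finite cellular chain in the base with coefficients in $\mathscr L_\phi$ is a formal sum $c = \sum_\sigma \lambda_\sigma\, \sigma$ with $\lambda_\sigma \in \Z[x^{\pm 1},d^{\pm 1}]$, where local finiteness means that for every compact $K \subset C_{n,m}$ only finitely many $\sigma$ meeting $K$ have $\lambda_\sigma \neq 0$. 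I would define $\iota(c) := \sum_\sigma \lambda_\sigma \cdot \tilde\sigma$, with $\lambda_\sigma$ acting through the deck representation $\Z[x^{\pm 1},d^{\pm 1}] = \Z[\Z\oplus\Z]$.

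Next I would check that $\iota(c)$ is locally finite on $\tilde C_{n,m}$. Given a compact set $\tilde K \subset \tilde C_{n,m}$, its image $\pi(\tilde K)$ is compact, so only finitely many cells $\sigma_1,\dots,\sigma_r$ meet $\pi(\tilde K)$ with nonzero coefficient. Each $\lambda_{\sigma_i}$ is a Laurent polynomial, involving only finitely many monomials $g \in \Z\oplus\Z$, and only finitely many of the translates $g\cdot\tilde\sigma_i$ actually meet $\tilde K$ because the deck action is properly discontinuous; hence $\iota(c)$ has finite support on $\tilde K$. Compatibility with the boundary operator is immediate from the cellular formula together with the definition of $\mathscr L_\phi$ via $\phi$, and the relative structure is preserved since cells of $C_w$ lift to cells of $\pi^{-1}(w)$.

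Injectivity at the chain level follows because the deck translates $\{g\cdot\tilde\sigma\}_{g,\sigma}$ form a free $\Z$-basis of the cellular chain complex of $\tilde C_{n,m}$: if $\iota(c)=0$ then each coefficient vanishes individually, forcing $\lambda_\sigma=0$ for every $\sigma$. In fact I would identify $C_*^{\mathrm{lf}}(C_{n,m};\mathscr L_\phi)$ with the subcomplex of $C_*^{\mathrm{lf}}(\tilde C_{n,m};\Z)$ consisting of chains whose restriction to each deck orbit has finite support in $\Z\oplus\Z$, which realises $\iota$ as an inclusion of subcomplexes and upgrades chain-level injectivity to homology. For the braid group equivariance, each $\beta \in B_n$ is realised by a homeomorphism of $\mathscr D_n$ fixing $\partial\mathscr D_n$ pointwise, hence fixing $w$; it lifts uniquely to a homeomorphism of $\tilde C_{n,m}$ commuting with deck transformations, and since $\iota$ is defined via exactly this cellular data it intertwines the two actions.

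The main obstacle is controlling local finiteness in the presence of the infinite deck group $\Z\oplus\Z$: a general locally-finite chain on $\tilde C_{n,m}$ may involve, along a single deck orbit, an infinite $\Z$-combination of translates (subject only to proper discontinuity), whereas the image of $\iota$ is constrained to \emph{Laurent-polynomial} combinations per base cell. This is precisely the obstruction to surjectivity and explains why the proposition asserts only injectivity. Verifying that the boundary operator remains well-defined under this finiteness constraint, and doing so relative to $\pi^{-1}(w)$ rather than the full boundary of $\tilde C_{n,m}$, is the step that will require the most care.
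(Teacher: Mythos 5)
The chain-level construction is sound, and the local-finiteness check is careful: you correctly observe that the coefficients in $\mathscr L_\phi$ are genuine Laurent polynomials (finite support in $\Z\oplus\Z$), so the lift of a locally-finite chain in the base remains locally finite upstairs, and you are right that this same finiteness constraint is exactly what blocks surjectivity.

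There is, however, a genuine gap where you assert that realising $\iota$ as an inclusion of subcomplexes ``upgrades chain-level injectivity to homology.'' An inclusion of chain complexes $A_* \hookrightarrow B_*$ does \emph{not} in general induce an injection on homology: a cycle in $A_m$ may bound in $B_m$ without bounding in $A_m$ (this is precisely what the connecting homomorphism of a pair measures). The worry here is concrete: a cycle in $C^{\text{lf}}_m(C_{n,m}, C_w; \mathscr L_\phi)$ with Laurent-polynomial coefficients could a priori bound an $(m{+}1)$-chain in $C^{\text{lf}}_{m+1}(\tilde C_{n,m}, \pi^{-1}(w);\Z)$ whose coefficients along a deck orbit are an infinite $\Z$-combination --- exactly the kind of chain your image excludes. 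To close this you need an additional input. The route most compatible with your cellular setup is to invoke that $C_{n,m}$ relative to $C_w$ deformation retracts onto an $m$-dimensional complex (this is what the multiarc/fork cell structures in this circle of ideas provide), so that both chain complexes vanish in degree $m{+}1$; then $H_m$ is just the group of $m$-cycles on each side, and chain-level injectivity in degree $m$ does imply homological injectivity. Alternatively, one could exhibit a chain retraction of $C^{\text{lf}}_*(\tilde C_{n,m},\pi^{-1}(w);\Z)$ onto the Laurent-supported subcomplex. Either way this step is the heart of the proposition and cannot be left implicit.
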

\begin{notation}\label{R:1}  
Let $H^{\text{lf},-}_m(\tilde{C}_{n,m}, \Z)$ be the image if the map $\iota$.  
\end{notation}
\begin{figure}[H]
\begin{center}
\begin{tikzpicture}\label{pic'}
[x=0.7mm,y=0.03mm,scale=0.1/5,font=\Large]
\foreach \x/\y in {-1.2/2, 0.4/2 , 1.3/2 , 2.5/2 , 3.6/2 } {\node at (\x,\y) [circle,fill,inner sep=1.3pt] {};}
\node at (-1,2) [anchor=north east] {$w$};
\node at (0.6,2.5) [anchor=north east] {$1$};
\node at (0.2,1.55) [anchor=north east] {$\color{red}\eta^e_1$};
\node at (1,2) [anchor=north east] {$\color{red} \eta^e_{e_1}$};
\node at (3.8,1.7) [anchor=north east] {$\color{red} \eta^e_{m}$};
\node (dn) at (8,1.5) [anchor=north east] {\Large \color{red}$\eta_e=(\eta^e_1,...,\eta^e_m)$};
\node at (2,5.1) [anchor=north east] {\Large \color{red}$\tilde{\eta}_e$};

\node at (4,2.5) [anchor=north east] {n};
\node at (0.8,0.8) [anchor=north east] {\bf d$=$};
\node at (0.8,4.4) [anchor=north east] {\bf $\bf \tilde{d}$};
\node at (0.3,2.6) [anchor=north east] {$\color{black!50!red}\text{Conf}e_1$};
\node at (3.6,2.6) [anchor=north east] {$\color{black!50!red}\text{Conf} e_{n}$};
\node at (2.1,3) [anchor=north east] {\huge{\color{black!50!red}$\FF_e$}};
\node at (2.1,6.2) [anchor=north east] {\huge{\color{black!50!red}$\tilde{\FF}_e$}};
\node at (-2.5,2) [anchor=north east] {\large{$C_{n,m}$}};
\node at (-2.5,6) [anchor=north east] {\large{$\tilde{C}_{n,m}$}};
\draw [very thick,black!50!red,-][in=-160,out=-10](-1.2,2) to (0.4,2);
\draw [very thick,black!50!red,->] [in=-145,out=-30](-1.2,2) to (3.6,2);
 \draw[very thick,black!50!red] (2.82, 5.6) circle (0.6);
\draw (2,2) ellipse (3.2cm and 1.3cm);
\draw (2,5.4) ellipse (3cm and 1.11cm);
\node (d1) at (1.3,0.8) [anchor=north east] {$d_1$};
\node (d2) at (1.9,0.8) [anchor=north east] {$d_{e_1}$};
\node (dn) at (2.8,0.8) [anchor=north east] {$d_m$};
\draw [very thick,dashed, red,->][in=-60,out=-190](1.2,0.7) to  (-0.2,1.9);
\draw [very thick,dashed,red,->][in=-70,out=-200](1.3,0.7) to (0,1.9);
\draw [very thick,dashed,red,->][in=-90,out=0](2.5,0.7) to (3,1.6);
\draw [very thick,dashed,red,->][in=-70,out=-200](0.8,4.4) to (3,5);
\end{tikzpicture}
\end{center}
\caption{}
\label{fig3}
\end{figure}
\subsection{Lawrence representation} In this part, we recall the definition of the version of the homological Lawrence representation the braid groups  which uses this set-up.
\begin{definition}({\bf Multiarcs} \cite{Martel})\\
Let us fix ${\bf \tilde{d}} \in \tilde{C}_{n,m}$ be a lift of the base point ${ \bf d}=\{d_1,...,d_m\}$ in the covering.

a) Let us consider a partition $e \in E_{n,m}$. Then, for each $i \in \{ 1,...,n \} $, we consider the segment in $\mathscr D_n$ which starts at the point $w$ and ends at the $i^{th}$ puncture. Then we look at the space of ordered configurations of $e_i$ points on this segment, as drawn in figure \ref{fig3} from above. 
Let us denote the projection onto the unordered configuration space by:
$$\pi_m : {\mathscr D}^{\times m}_n \setminus \{x=(x_1,...,x_m)| \exists \  i,j , \ x_i=x_j \}) \rightarrow C_{n,m}.$$
Then, the product of these configuration spaces leads to a submanifold in the configuration space: 
$$\FF_e:=\pi_m (Conf_{e_1} \times ... \times Conf_{{e}_{n}})\subseteq C_{n,m}$$
b) Now, we fix a set of paths between the fixed points from the boundary and the segments: $${\eta}^{e}_k: [0,1] \rightarrow \mathscr D_n, k \in \{1,...,m\}$$ as in picture \ref{fig3}.
The set of paths $\{ \eta^e_k \}$ leads to a path in the configuration space:
$$\eta^e := \pi_m \circ (\eta^{e}_1, ..., \eta^{e}_m ) : [0,1] \rightarrow C_{n,m}.$$
We remark that:
\begin{equation}
\begin{cases}
\eta^e(0)= {\bf d} \\
\eta^e(1)\in \FF_e.
\end{cases}
\end{equation}
Let us consider the unique lift of the path $\eta^e$ and denote it by $\tilde{\eta}^e$ such that: 
\begin{equation}
\begin{cases}
\tilde{\eta}^e:  [0,1] \rightarrow \tilde{C}_{n,m}\\
\tilde{\eta}^e(0)=\bf \tilde{ d}.
\end{cases}
\end{equation}
{\bf 3) Multiarcs}\\ \label{f}
Let $\tilde{\FF}_{e}$ be the unique lift of the submanifold $\FF_e$ with the properties:
\begin{equation}
\begin{cases}
\tilde{\FF}_{e}: (0,1)^m\rightarrow \tilde{C}_{n,m}\\ 
\tilde{\eta}^e(1) \in \tilde{\FF}_{e}.
\end{cases}
\end{equation}
Using this submanifold, we obtain a class in the Borel-Moore homology
$$[\tilde{\FF}_{e}] \in H^{\text{lf},-}_m(\tilde{C}_{n,m}, \Z).$$ 
This is called the multiarc corresponding to the partition $e \in E_{n,m}$.
 \end{definition}
\begin{proposition}(\cite{Martel})
The set of all multiarcs 
$\{  [\tilde{\FF}_e] \ | \  e\in E_{n,m} \}$ is a basis for $ H^{\text{lf},-}_m(\tilde{C}_{n,m}, \Z).$
\end{proposition}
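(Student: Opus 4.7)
The plan is to establish this in two stages: match the rank and then prove linear independence via the intersection pairing with a dual family, following the strategy of Martel and the classical Lawrence-Bigelow approach. First I would recall that, as a module over $\Z[x^{\pm 1}, d^{\pm 1}]$, the relevant part of the Borel-Moore homology $H^{\text{lf},-}_m(\tilde{C}_{n,m},\Z)$ is known (via the classical Lawrence computation or the CW decomposition of $C_{n,m}$ adapted to ordered configurations on segments from $w$ to the punctures) to be free of rank $|E_{n,m}| = \binom{n+m-1}{m}$. So once I produce that many $\Z[x^{\pm 1},d^{\pm 1}]$-linearly independent multiarc classes, they automatically form a basis.

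The heart of the proof is therefore linear independence. For this I would introduce a dual family in $H^{\partial}_{n,m}$: for each $e'\in E_{n,m}$, a lift $\tilde{\mathscr{D}}_{e'}$ of a product of small ``figure-eight'' or ``fork'' loops positioned near the punctures with multiplicities $e'_1,\ldots,e'_n$ (essentially the dual multiarcs used in definition \ref{T3} and also the building blocks of $\mathscr G_n^N$). I would then compute the intersection matrix
\[
M_{e,e'} \ := \ \langle [\tilde{\FF}_e],\tilde{\mathscr D}_{e'}\rangle \ \in\ \Z[x^{\pm 1},d^{\pm 1}].
\]
Most entries vanish for geometric reasons: the support of $\FF_e$ is concentrated on the segments from $w$ to the punctures, so if the partitions $e$ and $e'$ disagree at some puncture, only a restricted type of intersection can occur. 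Ordering $E_{n,m}$ by reverse lexicographic order (or any order compatible with the transport of configuration points from the boundary), I expect $M$ to be triangular, with diagonal entries of the form $\pm x^{a_e} d^{b_e}$ (a unit in $\Z[x^{\pm 1},d^{\pm 1}]$) arising from the unique transverse intersection near the standardly lifted base configuration. Triangularity with unit diagonal gives $\det M \in \Z[x^{\pm 1},d^{\pm 1}]^{\times}$ and hence $\Z[x^{\pm 1},d^{\pm 1}]$-linear independence of the multiarcs.

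The delicate step will be the bookkeeping of the monodromy. Each transverse intersection of $\FF_e$ and $\mathscr D_{e'}$ in the base contributes a monomial $x^a d^b$ determined by the homotopy class, in the covering, of a loop built from $\tilde{\eta}^e$, the lift of the intersection path used to define $\tilde{\mathscr D}_{e'}$, and the fixed lifts $\bf{\tilde{d}}$. Getting the exponents right — in particular, checking that the off-diagonal entries with ``smaller'' $e'$ really vanish, not just have a single triangularity-compatible form — is where all the care goes. Once this is done, combining unit-diagonal triangularity with the known total rank concludes that $\{[\tilde{\FF}_e]\ |\ e\in E_{n,m}\}$ is a $\Z[x^{\pm 1},d^{\pm 1}]$-basis of $H^{\text{lf},-}_m(\tilde{C}_{n,m},\Z)$.
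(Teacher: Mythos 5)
The paper does not prove this proposition; it cites it directly from Martel \cite{Martel}, so there is no in-paper argument to compare against. Your outline (pin down the free rank of $H^{\text{lf},-}_m(\tilde{C}_{n,m},\Z)$, then produce a family of dual classes whose intersection matrix with the multiarcs is triangular with unit diagonal) is the right general strategy and is in the spirit of Lawrence, Bigelow and Martel. However, two of your steps do not yet close.

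First, over $\Z[x^{\pm 1},d^{\pm 1}]$, which is not a PID, linear independence plus knowing the module is free of rank $|E_{n,m}|$ does \emph{not} imply that your family is a basis: $|E_{n,m}|$ linearly independent elements can generate a proper free submodule of the same rank (compare $(1-d)R \subsetneq R$). The triangular-unit-diagonal argument genuinely certifies a basis only if you additionally know that the dual family $\{\mathscr D_{e'}\}$ pairs perfectly with an already-established basis of $H^{\text{lf},-}_m$, so that $M$ is literally a change-of-basis matrix, or equivalently that the pairing is unimodular. You would need to set this up explicitly (e.g.\ by exhibiting a CW/Morse basis and showing the $\mathscr D_{e'}$ are Kronecker-dual to it), and right now that step is absent.

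Second, the claim that the diagonal entries are units is not mere bookkeeping --- it depends heavily on the choice of dual classes. With the naive choice (a product of small embedded loops around the punctures with multiplicities $e'_i$), a single loop around a puncture meets the segment to that puncture at two transverse points with opposite local signs differing by a deck transformation, so each such factor contributes a $d$-quantum integer of the form $\pm(1-d)$, which is not a unit. This is precisely why Bigelow uses noodles and this paper uses figure-eights (Lemma~\ref{R:2}/\ref{R:1}): those dual objects are geometrically tuned so that the single relevant intersection point contributes a monomial. So ``I expect $M$ to be triangular with unit diagonal'' needs to be replaced by a concrete construction of $\mathscr D_{e'}$ plus a verification of both the vanishing pattern and the unit on the diagonal. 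With those two gaps filled, the rank-plus-pairing strategy does give the proposition.
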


\begin{notation}(Normalised multiarc)
For $e \in E_{n,m}$, we denote the normalisation of the multiarc $[\tilde{\FF}_e]$ as below:
$$\F_{e}:= x^{\frac{1}{2}\sum_{i=1}^{n}(i-1) e_i} [\tilde{\FF}_e]  \in H^{\text{lf},-}_m(\tilde{C}_{n,m}, \Z).$$
(we see later that we will use this normalisation when we specialise the coefficients and the fraction from the exponent from above will not occur in practice)
\end{notation}
In the sequel, we will recall other homology classes, which will be more suitable for the topological model that we have in mind. We refer to \cite{Martel} for the detailed definition. For each partition $e\in E_{n,m}$, one defines $U_e$ to be the $m$-dimensional submanifold in $C_{n,m}$ constructed in the same manner as above but starts from $m$ different segments rather than configuration spaces on segments. More precisely, one consider the product of $m$ different segments between $w$ and the punctures, prescribed by the partition, as presented in the picture \ref{fig4}, and then quotient it by the symmetric group.   
\begin{figure}[H]
\begin{center}
\begin{tikzpicture}\label{pic}
[x=0.5mm,y=0.02mm,scale=0.1/5,font=\Large]
\foreach \x/\y in {-1.2/2, 0.4/2 , 1.3/2 , 2.5/2 , 3.6/2 } {\node at (\x,\y) [circle,fill,inner sep=1.3pt] {};}
\node at (-1,2) [anchor=north east] {$w$};
\node at (0.6,2.5) [anchor=north east] {$1$};
\node at (4,2.5) [anchor=north east] {n};
\node at (0.3,2.6) [anchor=north east] {$\color{red}e_1$};
\node at (3.5,2.4) [anchor=north east] {$\color{red}e_{n}$};
\node at (2.1,3) [anchor=north east] {\huge{\color{black!50!red}$U_e$}};
\node at (2.1,6.2) [anchor=north east] {\huge{\color{black!50!red}$\tilde{U}_e$}};
\node at (2,5.1) [anchor=north east] {\Large \color{red}$\tilde{\eta}^e$};
\node at (-2.5,2) [anchor=north east] {\large{$C_{n,m}$}};
\node at (-2.5,6) [anchor=north east] {\large{$\tilde{C}_{n,m}$}};
\draw [very thick,black!50!red,-](-1.2,2) to (0.4,2);
\draw [very thick,black!50!red,-][in=-155,out=-30](-1.2,2) to (0.4,2);
\draw [very thick,black!50!red,-] [in=-145,out=-30](-1.2,2) to (3.6,2);
\draw [very thick,black!50!red,-] [in=-130,out=-40](-1.2,2) to (3.6,2);

\draw [very thick,black!50!red,->](-1.2,2) to (-0.5,2);
\draw [very thick,black!50!red,->][in=-160,out=-30](-1.2,2) to (-0.3,1.8);
\draw [very thick,black!50!red,->] [in=-185,out=-35](-1.2,2) to (1.6,1.25);
\draw [very thick,black!50!red,->] [in=-180,out=-40](-1.2,2) to (1.6,1);

\draw (2,2) ellipse (3.2cm and 1.3cm);
\draw (2,5.4) ellipse (3cm and 1.11cm);
\node (d1) at (1.6,0.8) [anchor=north east] {$d_1$};
\node (d2) at (2.2,0.8) [anchor=north east] {$d_2$};
\node (dn) at (2.8,0.8) [anchor=north east] {$d_m$};
\node (dn) at (4,1.7) [anchor=north east] {\Large \color{red}$\eta^e$};
\draw [very thick,dashed, red,->][in=-60,out=-190](1.2,0.7) to  (-0.3,2);
\draw [very thick,dashed,red,->][in=-70,out=-200](1.3,0.7) to (0,1.9);
\draw [very thick,dashed,red,->][in=-90,out=0](2.5,0.7) to (3,1.5);
\node at (0.8,0.8) [anchor=north east] {\bf d$=$};
\node at (0.8,4.4) [anchor=north east] {\bf $\bf \tilde{d}$};
\draw [very thick,dashed,red,->][in=-70,out=-200](0.8,4.4) to (3,5);
\node at (0.8,4.4) [anchor=north east] {\bf $\bf \tilde{d}$};
 \draw[very thick,black!50!red] (2.82, 5.6) circle (0.6);
\end{tikzpicture}
\end{center}
\caption{}
\label{fig4}
\end{figure}

\begin{definition}(Code sequence \cite{Martel})\\
Let $\tilde{U}_e$ be the corresponding lift in $\tilde{C}_{n,m}$ of the submanifold $U_e$ through $\tilde{\eta}^e(1)$:
$$\U_{e}:= [\tilde{U}_e]  \in H^{\text{lf},-}_m(\tilde{C}_{n,m}, \Z).$$
\end{definition}
\begin{notation}
Let us denote the following quantum integers:
\begin{equation}
(n)_{y}=\frac{1-y^n}{1-y} \ \ \ \ \ \ (n)_{y}!=(1)_{y} \cdot ... \cdot (n)_{y}.
\end{equation}
\end{notation}

\begin{proposition}(Relation multiarcs and code sequences) \label{E:1}\\ 
Combing the relation between configurations on segments and multi-segments with relations concerning the breaking of an arc by a puncture from \cite{Martel}(Corollary 4.9), we have:
\begin{equation}
\U_{e}=\prod_{i=1}^{n} (e_i)_{d}! \ [\tilde{\FF}_e].
\end{equation}
This shows that:
\begin{equation}
\mathscr F_e=\frac{x^{\frac{1}{2}\sum_{i=1}^{n}(i-1) e_i}}{\prod_{i=1}^{n} (e_i)_{d}!} \U_{e}.
\end{equation}
\end{proposition}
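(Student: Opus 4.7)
The plan is to prove the first identity $\U_e = \prod_{i=1}^n (e_i)_d! \cdot [\tilde{\FF}_e]$ by localizing at each puncture, then combine Martel's two structural comparisons (arcs vs.\ configurations on an arc, and compatibility with the multi-puncture product decomposition) to obtain the stated relation. The second identity then follows by unwinding the definition of $\F_e$.

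First, I would observe that both submanifolds $U_e$ and $\FF_e$ in $C_{n,m}$ decompose according to the partition $e$: the $e_i$ points assigned to puncture $i$ sit either on $e_i$ disjoint parallel arcs joining $w$ to puncture $i$ (for $U_e$) or in the ordered configuration space $\operatorname{Conf}_{e_i}$ on a single such arc (for $\FF_e$). Since the local system $\phi$ of \eqref{eq:23} factors through the abelianization and the contributions from distinct punctures are geometrically independent, the comparison reduces to $n$ local comparisons, one at each puncture, and the global relation is the product of the local ones. The path $\tilde{\eta}^e$ was chosen so that both $\tilde{U}_e$ and $\tilde{\FF}_e$ are lifted through $\tilde{\eta}^e(1)$, ensuring that base-point choices do not introduce any additional power of $x$ or $d$ when we combine the local pictures.

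Next, the key local statement at puncture $i$ is Martel's Corollary 4.9 on breaking an arc: one can continuously slide the $e_i$ parallel arcs together onto a single arc, producing an unordered configuration of $e_i$ points on that arc. In the covering, the lift of the resulting configuration manifold decomposes as a sum over the symmetric group $Sym_{e_i}$: each $\sigma \in Sym_{e_i}$ gives a distinct lift obtained from the base lift by a sequence of transpositions, and each transposition contributes one copy of the generator $\delta$ whose image under $\phi$ is $d$. Summing over the group weighted by these $d$-factors yields
\[
\sum_{\sigma \in Sym_{e_i}} d^{\ell(\sigma)} = (e_i)_d!,
\]
where $\ell(\sigma)$ counts inversions. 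Hence locally at the $i^{\text{th}}$ puncture one has a factor $(e_i)_d!$; taking the product over $i=1,\dots,n$ produces the first equality of the Proposition. Finally, substituting the definition $\F_e := x^{\frac{1}{2}\sum_{i=1}^n (i-1) e_i}[\tilde{\FF}_e]$ and solving for $\U_e$ immediately yields
\[
\mathscr F_e = \frac{x^{\frac{1}{2}\sum_{i=1}^n (i-1) e_i}}{\prod_{i=1}^n (e_i)_d!}\,\U_e.
\]

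The main subtlety is bookkeeping: one must check that the distinguished lift $\tilde{U}_e$ prescribed by $\tilde{\eta}^e(1)$ corresponds exactly to the identity term in the $Sym_{e_i}$-sum (so that no overall monomial in $d$ is lost), and that the $x$-normalization used to define $\F_e$ plays no role here since the local comparison only involves swaps in the configuration coordinates, which are registered by $d$ and not by $x$. Once this is verified puncture by puncture, the global formula is a straightforward multiplicative assembly of the local contributions.
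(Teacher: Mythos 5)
Your proposal is correct and follows essentially the route the paper takes: the paper simply invokes Martel's Corollary 4.9 (breaking an arc at a puncture and comparing parallel arcs with configurations on a single arc), and your puncture-by-puncture localization with the identity $\sum_{\sigma\in Sym_{e_i}} d^{\ell(\sigma)}=(e_i)_d!$ is precisely the content of that cited result, while the second identity is, as you say, just the definition of $\F_e$ combined with the first. The only caveat is conventional (the sign of the exponent of $d$ attached to a transposition depends on orientation choices), which does not affect the conclusion since the quantum factorial is exactly the inversion-generating function you compute.
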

\begin{notation}(Lawrence representation)\label{T1}\\
We denote the braid group action in the basis given by multiarcs $\mathscr{B}_{H^{\text{lf},-}_m(\tilde{C}_{n,m}, \Z)}:=\{ \F_{e}, e \in E_{n,m}\}\subseteq H^{\text{lf},-}_m(\tilde{C}_{n,m}, \Z)$ by:
$$l_{n,m}: B_n\rightarrow Aut(H^{\text{lf},-}_m(\tilde{C}_{n,m}, \Z)).$$
\end{notation}

For our case, we will use a Poincar\'e-Lefschetz duality, between middle dimensional homologies of the covering space with respect to different parts of its boundary. 
\begin{notation}\label{T2}
For the computational part, we will change slightly the infinity part of the configuration space and denote the following:
\begin{equation*}
\begin{aligned}
H_{n,m}:=&H^{\text{lf},\infty,-}_m(\tilde{C}_{n,m}; \Z) \text{ the homology relative to the infinity part that encodes }\\
& \text{ the open boundary of the covering of the configuration space consisting }\\
&\text{ in the configurations that project in the base space to a multipoint which }\\
& \text { touches a puncture from the punctured disc and relative to the } \\
& \text { boundary part defined above, whose projection contains the base point } w.\\
H^{\partial}_{n,m}:=&H^{lf, \Delta}_{m}(\tilde{C}_{n,m}, \partial^{-}; \Z) \text{ the homolgy relative to the boundary of } \tilde{C}_{n,m} \\
& \text{  which is not in the fiber over w and Borel-Moore}\\
& \text{ corresponding to collisions of points in the configuration space.}
\end{aligned}
\end{equation*}
\end{notation}
The details of this construction are presented in \cite{CrM}. In the sequel, we use the homology classes presented above, seen in the modified version of the homology $H_{n,m}$. However, following \cite{CrM}, all relations between the homology classes still hold in this version of the homology. 
\begin{definition}(Version of the Lawrence representation)\label{T1'}\\
We denote the braid group action in the basis given by multiarcs $\mathscr{B}_{H_{n,m}}:=\{ \F_{e}, e \in E_{n,m}\}\subseteq H_{n,m}$ by:
$$L_{n,m}: B_n\rightarrow Aut(H_{n,m}).$$
\end{definition}
\subsection{Identification between weight space representations and homological representations}
The advantage of the basis presented in the above section on the homological side consists in the fact that it corresponds to the natural basis in the generic weight space on the quantum side. More precisely, we have the following identification.
\begin{notation}
We will use the following specialisation:
\begin{equation}
\begin{cases}
\gamma: \Z[x^{\pm1},d^{\pm1}]\rightarrow \Z[q^{\pm1},s^{\pm1}]\\
\gamma(x)=s^2; \ \ \gamma(d)=q^{-2}.
\end{cases}
\end{equation}
\end{notation}
\begin{theorem}(\cite{Martel})\label{T:1}   
The quantum representation on weight spaces is isomorphic to the homological representation of the braid group:
\begin{equation}
\begin{aligned}
\left( \hat{\varphi}_{n,m}, \ \mathscr{B}_{\hat{V}_{n,m}} \right) \ \ \ & \ \ \ \left( l_{n,m}|_{\gamma}, \ \mathscr{B}_{H^{\text{lf},-}_m(\tilde{C}_{n,m}, \Z)}  \right)\\
B_n\curvearrowright \hat{V}_{n,m} & \simeq H^{\text{lf},-}_m(\tilde{C}_{n,m}, \Z)|{_{\gamma}}\curvearrowleft B_n\\
\Theta(v_{e_1}\otimes ... \otimes v_{e_{n}}) & =\mathscr F_{e}, \text{ for }  e=(e_1,...,e_n) \in E_{n,m}.\\
\end{aligned}
\end{equation}
\end{theorem}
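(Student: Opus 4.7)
The plan is to exhibit $\Theta$ as the unique $\Z[q^{\pm 1},s^{\pm 1}]$-linear map sending $v_e = v_{e_1}\otimes\cdots\otimes v_{e_n}$ to $\mathscr F_e$ for every $e\in E_{n,m}$, and then verify that this bijection between the two bases intertwines the braid group actions. Since both $\hat V_{n,m}$ and $H^{\text{lf},-}_m(\tilde C_{n,m},\Z)|_\gamma$ are free modules indexed by the same set $E_{n,m}$, $\Theta$ is automatically a module isomorphism, so the whole content of the statement is equivariance under $B_n$.

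I would check equivariance on the standard generators $\sigma_i$. Algebraically, $\hat\varphi_{n,m}(\sigma_i)$ acts as $\Id^{\otimes(i-1)}\otimes\mathscr R\otimes\Id^{\otimes(n-i-1)}$, and topologically the half-twist $\sigma_i$ only modifies multiarcs over the sub-disc containing punctures $i$ and $i+1$. Both sides are therefore local, so it suffices to treat the case $n=2$ and prove, for all $(i,j)$ with $i+j=m$, the identity
\begin{equation*}
\Theta\bigl(\mathscr R(v_i\otimes v_j)\bigr) \ = \ l_{2,m}(\sigma_1)|_\gamma\bigl(\mathscr F_{(i,j)}\bigr).
\end{equation*}
On the quantum side, Remark \ref{R:R} expands the left hand side as the sum over $n\in\{0,\dots,i\}$ of the vectors $v_{j+n}\otimes v_{i-n}$ weighted by $s^{-(i+j)} F_{i,j,n}(q)\prod_{k=0}^{n-1}(sq^{-k-j}-s^{-1}q^{k+j})$. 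On the topological side, one deforms the product of configuration spaces $\FF_{(i,j)}$ under a half-twist of the two punctures, and decomposes the resulting submanifold as a combination of the multiarcs $\mathscr F_{(j+n,i-n)}$. The coefficients collected during this decomposition come from three sources: winding numbers around punctures, contributing powers of $x$; crossings among strands of the two moving configurations, contributing powers of $d$; and combinatorial factors produced when a configuration space on a single segment is broken across a puncture, which by Proposition \ref{E:1} produces $q$-factorials in $d$.

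The main obstacle is the bookkeeping to show that these two sums agree term by term after applying $\gamma$, with $\gamma(x)=s^2$ and $\gamma(d)=q^{-2}$. The normalisation $x^{\frac{1}{2}\sum_i(i-1)e_i}$ in the definition of $\mathscr F_e$ is exactly what is needed to match the prefactor $s^{-(i+j)}$ on the quantum side, and the factor $1/\prod_i(e_i)_d!$ relating $\mathscr F_e$ to the code-sequence class $\U_e$ is what produces the $q$-binomial ${n+j\brack j}_q$ inside $F_{i,j,n}(q)$. The remaining powers of $q$ in $F_{i,j,n}(q)$ match the oriented intersection contributions of the braid half-twist, and the Pochhammer-type factor $\prod_k(sq^{-k-j}-s^{-1}q^{k+j})$ is produced by the combinatorics of pushing $n$ points from segment $1$ across the puncture $2$, together with the $s$-weights coming from loops around the second puncture.

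Granting the local computation in the $n=2$ case, extension to arbitrary $n$ is immediate by locality of both actions, and extension to the full weight space follows by $\Z[q^{\pm 1},s^{\pm 1}]$-linearity. Since the detailed calculation carrying out the above matching is exactly what is established in \cite{Martel}, invoking it completes the proof.
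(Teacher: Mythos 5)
The paper states Theorem \ref{T:1} by direct citation to \cite{Martel} and provides no proof of its own, so there is no in-paper argument against which to check yours in detail. Your proposal is consistent with that: you sketch a plausible outline of Martel's argument (the bases on each side are indexed by $E_{n,m}$ so $\Theta$ is automatically a module isomorphism; locality of both $\hat\varphi_{n,m}(\sigma_i)$ and $l_{n,m}(\sigma_i)$ reduces equivariance to the $n=2$ case; the coefficients from Remark \ref{R:R} are matched against winding numbers, crossing contributions, and the arc-breaking factorials from Proposition \ref{E:1}) and then, as the paper does, defer to \cite{Martel} for the detailed coefficient bookkeeping.
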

\begin{corollary}\hspace{-1.1mm}(Identification specialised at generic parameters or roots of unity)\label{T:1'''}          
The corresponding specialisations of the representations from Theorem \ref{T:1} are isomorphic as below:
\begin{equation}
\hat{\varphi}_{n,m}|_{\eta_{q,N-1}} \simeq l_{n,m}|_{\psi_{q,N-1}}
\end{equation}
\begin{equation}
\hat{\varphi}_{n,m}|_{\eta_{\xi_N,\lambda}} \simeq l_{n,m}|_{\psi_{\xi_N,\lambda}}
\end{equation}
\end{corollary}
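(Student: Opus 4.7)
The plan is to deduce both specialised identifications from Theorem~\ref{T:1} by a pure base-change argument. The starting observation is the commutative triangle of specialisations exhibited in Figure~\ref{fig:7}: on the generators $x$ and $d$ one checks directly that
\begin{equation*}
\eta_{q,N-1}\circ\gamma(x)=\eta_{q,N-1}(s^{2})=q^{2(N-1)}=\psi_{q,N-1}(x), \qquad \eta_{q,N-1}\circ\gamma(d)=q^{-2}=\psi_{q,N-1}(d),
\end{equation*}
so that $\psi_{q,N-1}=\eta_{q,N-1}\circ\gamma$ as ring morphisms $\Z[x^{\pm 1},d^{\pm 1}]\to\Z[q^{\pm 1}]$, and an identical computation yields $\psi_{\xi_{N},\lambda}=\eta_{\xi_{N},\lambda}\circ\gamma$ as maps into $\C$.

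Next, Theorem~\ref{T:1} provides a $B_{n}$-equivariant isomorphism of $\Z[q^{\pm 1},s^{\pm 1}]$-modules
\begin{equation*}
\Theta: \hat{V}_{n,m}\ \xrightarrow{\ \sim\ }\ H^{\mathrm{lf},-}_{m}(\tilde{C}_{n,m},\Z)\big|_{\gamma}.
\end{equation*}
I would apply the base-change functor $-\otimes_{\Z[q^{\pm 1},s^{\pm 1}]}\Z[q^{\pm 1}]$ through $\eta_{q,N-1}$ to both sides. On the algebraic side, Notation~\ref{N:spec} and the definitions in subsection~\ref{S:quan} identify the resulting module with $\hat{V}^{q,N-1}_{n,m}$ and, by Remark~\ref{P:hww}, endow it with the braid action $\hat{\varphi}_{n,m}|_{\eta_{q,N-1}}$. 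On the topological side, iterating specialisations means applying the composite $\eta_{q,N-1}\circ\gamma$, which by the first paragraph equals $\psi_{q,N-1}$; hence the right-hand side becomes $H^{\mathrm{lf},-}_{m}(\tilde{C}_{n,m},\Z)|_{\psi_{q,N-1}}$, carrying the action $l_{n,m}|_{\psi_{q,N-1}}$.

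The only point requiring a short verification is compatibility of the braid actions with base change. This is automatic because on each side the $B_{n}$-action is implemented by matrices whose entries lie in the respective base ring: for the quantum side the matrix entries of the specialised $R$-matrix come from Remark~\ref{R:R}, and specialising the matrices of $\hat{\varphi}_{n,m}$ through $\eta_{q,N-1}$ defines $\hat{\varphi}_{n,m}|_{\eta_{q,N-1}}$ by construction; for the homological side the action is $\Z[x^{\pm 1},d^{\pm 1}]$-linear, so tensoring commutes with composition of matrices and equivariance is preserved. Therefore $\Theta\otimes_{\eta_{q,N-1}}\mathrm{id}$ is the sought isomorphism of $B_{n}$-representations, proving the first equivalence.

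The root-of-unity case is handled in exactly the same way, now using the base-change functor $-\otimes_{\Z[q^{\pm 1},s^{\pm 1}]}\C$ through $\eta_{\xi_{N},\lambda}$. The corresponding algebraic side is $\hat{V}^{\xi_{N},\lambda}_{n,m}$ with the action $\hat{\varphi}^{\xi_{N},\lambda}_{n,m}$ from Remark~\ref{R:8}, and the topological side identifies via $\psi_{\xi_{N},\lambda}=\eta_{\xi_{N},\lambda}\circ\gamma$ with $H^{\mathrm{lf},-}_{m}(\tilde{C}_{n,m},\Z)|_{\psi_{\xi_{N},\lambda}}$ carrying $l_{n,m}|_{\psi_{\xi_{N},\lambda}}$. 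The expected main obstacle in a naive approach, namely that something could degenerate at the root of unity, is in fact already addressed upstream by Lemma~\ref{L:2}, which guarantees that the specialised $R$-matrix descends correctly; thus no additional analysis is needed and the corollary follows entirely from the functoriality of base change applied to Theorem~\ref{T:1}.
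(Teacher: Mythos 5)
Your proof is correct and is essentially the argument the paper implicitly invokes (the paper states the corollary with no explicit proof, treating it as a direct base-change consequence of Theorem~\ref{T:1}). The identities $\psi_{q,N-1}=\eta_{q,N-1}\circ\gamma$ and $\psi_{\xi_N,\lambda}=\eta_{\xi_N,\lambda}\circ\gamma$ are exactly what Figure~\ref{fig:7} records, and applying the corresponding base-change functor to the $B_n$-equivariant isomorphism $\Theta$ yields both specialised identifications. One small misattribution at the end: Lemma~\ref{L:2} concerns the preservation of the finite tensor power $U_\lambda^{\otimes n}$ inside $\hat{V}_{\xi_N,\lambda}^{\otimes n}$ under the specialised $R$-matrix; it is not what guarantees the root-of-unity case here. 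The corollary involves weight spaces of the full Verma module, and there is no degeneration to rule out because the $R$-matrix entries from Remark~\ref{R:R} are Laurent polynomials in $q,s$, so specialisation along $\eta_{\xi_N,\lambda}$ is unproblematic (cf.\ Proposition~\ref{P:7} and Remark~\ref{R:8}, not Lemma~\ref{L:2}). This does not affect the validity of your argument.
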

\subsection{Intersection pairing}\label{T3}
Going back to the version of the Lawrence representation that we will use, we present the duality that leads to a topological pairing between the two types of homology of the covering space, discussed in \cite{CrM}, which is defined as follows:
$$< , >:H_{n,m} \otimes H_{n,m}^{\partial}\rightarrow \Z[x^{\pm 1}, d^{\pm 1}].$$

\begin{definition}(Intersection form in the covering space)(\cite{Big}) \label{D:55}   

Let us consider two homology classes $\mathscr F \in H_{n,m}, \mathscr G \in H^{\partial}_{n,m}$. Suppose that these classes are represented by two $m$-manifolds $\tilde{M},\tilde{N}\subseteq \tilde{C}_{n,m}$, which intersect transversely such that:
\begin{equation}
\begin{aligned}
\mathscr F=[\tilde{M}]; \mathscr G=[\tilde{N}] \ \ \ \ \ \ \ \ \ \ \ \ \ \ \ \ \ \ \ \ \ \ \ \ \ \ \ \ \ \ \ \ \\
\text{ card }| \tilde{M} \cap t \tilde{N}  |< \infty, \ \forall t \in Deck(\tilde{C}_{n,m},C_{n,m}).
\end{aligned}
\end{equation}
Then, the intersection form is given by:
\begin{equation}
<[\tilde{M}],[\tilde{N}]>=\sum_{(u,v)\in \Z^{2}} \left(x^ud^v \tilde{M},\tilde{N}\right) x^u d^v.
\end{equation}
(here, $(\cdot, \cdot)$ is the usual geometric intersection number) 
\end{definition}
In the next part, we will see that if the homology classes are given by classes of certain submanifolds which are actually lifts of submanifolds in the base  configuration space, the intersection pairing in the covering space is encoded in the base configuration space. 

More precisely, let us suppose that there exist immersed submanifolds $M,N \subseteq C_{n,m}$ which intersect transversely in a finite number of points such that $
\tilde{M}$ is a  lift of M and  $\tilde{N}$ is a lift of $N$.
\begin{proposition}(Computing the intersection pairing directly from intersections in the base space and the local system \cite{Big})\label{P:3} 

Let $x \in M \cap N$. For each such $x$, we will construct an associated loop $l_x \subseteq C_{n,m}$.
Let us denote the geometric intersection number between $M$ and $N$ in $x$ by $\alpha_x$.

a) {\bf Construction of $l_x$}

Suppose we have two paths $\gamma_M, \delta_N:[0,1]\rightarrow C_{n,m}$ such that:
\begin{equation}
\begin{cases}
\gamma_M(0)={\bf d}; \ \gamma_M(1)\in M; \ \tilde{\gamma}_{M}(1) \in \tilde{M}\\
\delta_N(0)={ \bf d}; \ \delta_N(1)\in N; \ \tilde{\delta}_N(1) \in \tilde{N}
\end{cases}
\end{equation}
where $\tilde{\gamma}_{M},\tilde{\delta}_N$ are the unique lifts of $\gamma_M, \delta_N$ through $\bf \tilde{d}$.

Moreover, consider $\bar{\gamma}_M, \bar{\delta}_N:[0,1]\rightarrow C_{n,m}$ such that:
\begin{equation}
\begin{cases}
Im(\bar{\gamma}_M)\subseteq M; \bar{\gamma}_{M}(0)=\gamma_{M}(1);  \bar{\gamma}_{M}(1)=x\\
Im(\bar{\delta}_N)\subseteq N; \ \bar{\delta}_{N}(0)=\delta_{N}(1);  \ \bar{\delta}_{N}(1)=x.\\
\end{cases}
\end{equation}
Then, consider the loop as follows:
$$l_x:=\delta_N\circ\bar{\delta}_N\circ \bar{\gamma}_M^{-1}\circ \gamma_M^{-1}.$$
\end{proposition}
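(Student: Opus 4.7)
The plan is to show that each geometric intersection point $x \in M \cap N$ contributes a single monomial to the intersection pairing of Definition \ref{D:55}, and that the exponent of this monomial is precisely recorded by the local system $\phi$ evaluated on the loop $l_x$. Starting from the formula
\begin{equation*}
\langle [\tilde M], [\tilde N]\rangle = \sum_{(u,v)\in \Z^2} (x^u d^v \tilde M, \tilde N)\, x^u d^v,
\end{equation*}
I would first argue that since $\tilde M, \tilde N$ are lifts of $M, N$ and the covering map $\pi\colon \tilde C_{n,m}\to C_{n,m}$ is a local diffeomorphism, every transverse intersection of $\tilde M$ with a deck translate $x^u d^v\tilde N$ projects to a transverse intersection in $M \cap N$, with the same local multiplicity $\alpha_x$. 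Conversely, above each $x \in M \cap N$ there is exactly one pair $(u_x,v_x)\in \Z^2$ such that the lift $\tilde x_M \in \tilde M$ of $x$ equals the image of a lift $\tilde x_N \in \tilde N$ under $x^{u_x} d^{v_x}$. So the proof reduces to identifying $(u_x, v_x)$ with $\phi(l_x)$.

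Next, I would make this identification explicit by lifting the two concatenations $\gamma_M \cdot \bar\gamma_M$ and $\delta_N\cdot \bar\delta_N$ starting at $\tilde d$. By hypothesis, the lift of $\gamma_M$ terminates in $\tilde M$, and since $\bar\gamma_M$ lies inside $M$, its continuation remains in $\tilde M$, landing at $\tilde x_M$. Similarly, lifting $\delta_N \cdot \bar\delta_N$ produces a point $\tilde x_N \in \tilde N$ above $x$. The two points $\tilde x_M$ and $\tilde x_N$ lie over the same base point, so they differ by a unique deck transformation, and by construction this deck transformation is precisely the monodromy of the loop
\begin{equation*}
l_x = \delta_N \cdot \bar\delta_N \cdot \bar\gamma_M^{-1} \cdot \gamma_M^{-1},
\end{equation*}
i.e.\ $\phi(l_x) \in \Z \oplus \Z$. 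Thus $\tilde x_M \in \phi(l_x)\cdot \tilde N$, and the contribution of $x$ to the pairing is $\alpha_x \cdot \phi(l_x)$.

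To finish, I would sum over all points of $M \cap N$. Since intersections of $\tilde M$ with different deck translates of $\tilde N$ partition the preimage of $M \cap N$ in $\tilde M$, and each base point $x$ contributes to exactly one translate, the resulting sum rewrites the defining expression as
\begin{equation*}
\langle [\tilde M], [\tilde N]\rangle = \sum_{x \in M \cap N} \alpha_x \cdot \phi(l_x),
\end{equation*}
which is the formula claimed by the proposition.

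The principal obstacle will be the bookkeeping needed to ensure the bijective correspondence between intersection points in the base and intersection points of $\tilde M$ with the whole orbit $\{x^u d^v \tilde N\}_{(u,v)}$, in particular checking that the path-lifting procedure is independent of the auxiliary choices of $\gamma_M, \delta_N, \bar\gamma_M, \bar\delta_N$ up to the homotopy class of $l_x$; once that is granted, the formula follows from the general principle that covering-space monodromy is given by the local system, combined with transversality in the base.
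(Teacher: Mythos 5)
The paper does not prove Proposition \ref{P:3}; it is stated with a citation to Bigelow \cite{Big} and used directly, so there is no in-paper proof to compare against. Your sketch is the standard covering-space argument and its overall structure is correct: you set up a bijection between the intersection points of $\tilde M$ with the deck orbit $\{x^u d^v \tilde N\}_{(u,v)}$ and the points of $M\cap N$, and identify the deck transformation attached to each $x\in M\cap N$ with the monodromy $\phi(l_x)$.

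Two points deserve tightening. First, the uniqueness of the lift $\tilde x_M\in\tilde M$ over a given $x$ --- the step the whole correspondence rests on --- is not automatic: it requires that $\tilde M\to M$ be a homeomorphism, equivalently that $\phi$ restrict to the trivial homomorphism on the image of $\pi_1(M)$ in $\pi_1(C_{n,m})$, and likewise for $N$. This is the same condition behind the well-definedness issue you flag at the end (the ambiguity in $\bar\gamma_M$, $\bar\delta_N$ up to homotopy rel endpoints is exactly a loop in $M$, resp.\ $N$, so $\phi(l_x)$ is unambiguous precisely when $\phi$ kills such loops), and it should be stated as a standing hypothesis rather than a bookkeeping obstacle; in the paper's applications it holds because the multiarcs, code sequences and products of figure-eights all have $\phi$-null fundamental group. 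Second, there is a small inversion in one intermediate step: tracking the lift of $l_x=\delta_N\cdot\bar\delta_N\cdot\bar\gamma_M^{-1}\cdot\gamma_M^{-1}$ starting at $\tilde{\mathbf d}$, one goes first into $\tilde N$, reaches $\tilde x_N$, then travels along a deck translate of $\tilde M$ and ends at $\phi(l_x)\cdot\tilde{\mathbf d}$, which shows that $\phi(l_x)$ is the deck transformation carrying $\tilde x_M$ to $\tilde x_N$, i.e.\ $\tilde x_N\in\phi(l_x)\cdot\tilde M$, not $\tilde x_M\in\phi(l_x)\cdot\tilde N$ as written. Combined with the convention $\sum_{(u,v)}(x^u d^v\tilde M,\tilde N)\,x^u d^v$ in Definition \ref{D:55}, the contribution of $x$ is indeed $\alpha_x\,\phi(l_x)$, so the final formula you obtain is correct, but the intermediate identification should be stated in the other direction.
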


b) {\bf Formula for the intersection form}

Then, using these loops and the local system we obtain the intersection form as follows:
\begin{equation}\label{eq:1}  
<[\tilde{M}],[\tilde{N}]>=\sum_{x \in M \cap N} \alpha_x \cdot \phi(l_x) \in \Z[x^{\pm1}, d^{\pm1}].
\end{equation}

\section{Globalised definitions over two variables}\label{4}
In this part, we define certain evaluations and coevaluations over two variables, which correspond to the previous evaluations and coevaluations through the two specialisations of the coefficients. This construction will be used further on, when we will show that we can see the coloured Jones and Alexander polynomials from certain formulas over two variables, specialised in two different manners. 
 \begin{remark}
An important point for the further construction concerns the fact that the $R$-matrix used in the construction of the coloured Alexander polynomial (whose formula is presented in \cite{Ito}) can be seen from the generic $R$-matrix described in section \ref{SS:1} through the specialisation $\eta_{\xi_N,\lambda}$. Thus, at the level of braid group representations, we can use the generic $R$-matrix and then specialise it. The difference between the two cases will come from the dualities presented in equation \eqref{E:DualityForCat}.
 \end{remark}
 
 Let us start with an oriented knot $L$ that can be presented as a closure of a braid $\beta_n$ with $n$ strands. The Reshetikhin-Turaev construction is obtained from the corresponding functor applied to the three main levels of the diagram: the caps, the cups and the braid (see \cite{Cr2} for details about this construction):
 \begin{equation} \label{p1}  
 \begin{aligned}
1) \text{ the evaluation}                              
 \ \ \ \ \ \ \ \ \   \uparrow \tikz[x=1mm,y=1mm,baseline=0.5ex]{\draw[<-] (3,0) .. controls (3,3) and (0,3) .. (0,0); \draw[<-] (6,0) .. controls (6,6) and (-3,6) .. (-3,0); \draw[draw=none, use as bounding box](0,0) rectangle (3.5,3);} \ \ \ \ \ \ \ \ \ \ \ \ \ \ \ \ \ \ \ \ \ \ \ \ \ \ \ \ \\
\ \ \ \ \ \ \ \ \ \ \ \ \ \ \  2) \text{ braid level } \ \ \ \ \ \ \ \ \ \ \ \ \ \ \beta_n \  \cup \ \bar{\unit}_{n-1}. \ \ \ \  \ \ \ \ \ \ \ \ \ \ \ \ \ \ \ \ \ \ \ \\
3) \text{ the coevaluation }                                 
 \ \ \ \ \ \  \uparrow  \tikz[x=1mm,y=1mm,baseline=0.5ex]{\draw[<-] (0,3) .. controls (0,0) and (3,0) .. (3,3); \draw[<-] (-3,3) .. controls (-3,-3) and (6,-3) .. (6,3); \draw[draw=none, use as bounding box](-0.5,0) rectangle (3,3);} \ \ \ \ \ \ \ \ \ \ \ \ \ \ \ \ \ \ \ \ \  \ \ \ \ \ \  
\end{aligned}
\end{equation}
As we have seen above, for both quantum invariants coloured Jones case and coloured Alexander case, the braid part comes from the generic action $\hat{\varphi}_n$, as presented in Corollary \ref{defCJ} and Proposition \ref{defADO} (using Proposition \ref{P:6}, Proposition \ref{P:7} and Lemma \ref{L:2}). 

Now, we look at the bottom part of the knot $L$, corresponding to the level $3)$ in the diagram \eqref{p1}. We are interested in the image of the corresponding coevaluation. Since coevaluations are morphisms which commute with the $K$ action, we get the following property: 
\begin{equation}\label{eq:6}
\begin{cases}
K\left(\Id \otimes {\tcoev}_{V_{N}^{\otimes {n-1}}}\right)(v_0)=q^{N-1} \left(\Id \otimes {\tcoev}_{V_{N}^{\otimes {n-1}}}\right) (v_0) =\\ \ \ \ \ \ \ \ \ \ \ \ \ \ \ \ \ \ \ \ \ =q^{(2n-1)(N-1)-2{\color{red}(n-1)(N-1)}} \left(\Id \otimes {\tcoev}_{V_{N}^{\otimes {n-1}}}\right)(v_0) \\
K\left(\Id \otimes {\tcoev}_{U_{\lambda}^{\otimes {n-1}}}\right)(v_0)=\xi_N^{\lambda} \left(\Id \otimes {\tcoev}_{U_{\lambda}^{\otimes {n-1}}}\right)(v_0) =\\
 \ \ \ \ \ \ \ \ \ \ \ \ \ \ \ \ =\xi_N^{n\lambda+(n-1)(2N-2-\lambda)-2{\color{red}(n-1)(N-1)}} \left(\Id \otimes {\tcoev}_{U_{\lambda}^{\otimes {n-1}}}\right)(v_0) .\\
\end{cases}
\end{equation}
This shows precisely that the coevaluations from above send the vector $v_0$  in a vector from the weight space of weight $(n-1)(N-1)$ in the mixt tensor product with $2n-1$ components (at roots of unity it has the extra term $(n-1)(2N-2-\lambda)$ because in this case the representations are not self dual). However, we will not enter into these details and for our purpose, we will add an extra function such that we arrive in a weight space as presented in definition \ref{D:2}. 
\subsection{Construction }\label{SS:2}
In this part, we will modify the evaluation and coevaluation by an isomorphism such that it will make the computations on the homological side easier. 

\begin{notation}(Generic finite dimensional vector space)\\
We consider the generic vector space over $\Z[q^{\pm 1},s^{\pm 1}]$ generated by the first $N$ vectors in the Verma module:
$$G_N:=<v_0,...,v_{N-1}>\subseteq \hat{V}.$$
\end{notation}
We will introduce the notion of weight spaces corresponding to this subspace, in the generic situation, defined over two variables. They will encode the weight spaces from the finite dimensional modules, which are defined using specialisations of coefficients.
\begin{definition}(Generic weight spaces truncated up to level $N$) 
\label{D:8}\\
Let us consider the following:
\begin{equation}
V^{N-1}_{n,m}:= Ker (K-s^{n}q^{-2m}Id)\subseteq G^{\otimes n}_N.
\end{equation}
In other words, these weight spaces are obtained from the generic ones, by intersection with the finite dimensional part:
\begin{equation}
V^{N-1}_{n,m}=\hat{V}_{n,m}\cap G_{N}^{\otimes n}\subseteq \hat{V}^{\otimes n}.
\end{equation}
\end{definition}
\begin{definition}(Generic evaluations and coevaluations up to level $N$) \label{D:9}\\  
Let us define generic coevaluations and evaluations, up to level $N$, as below:
\begin{equation}\label{eq:0}
\begin{aligned}
& \hspace{-6mm} \bullet {\tcoev}^{\otimes {n-1}}_N:\Z[q^{\pm1}, s^{\pm1}]\rightarrow G_N^{\otimes n-1}\otimes (G_N^{*})^{\otimes n-1}\\
&{\tcoev}^{\otimes {n-1}}_N(1)=\sum_{i_1,...,i_{n-1}=0}^{N-1} v_{i_1}\otimes ... \otimes v_{i_{n-1}} \otimes v_{i_{n-1}}^* \otimes ... \otimes v_{i_1}^*.\\
&\hspace{-6mm} \bullet {\ev}^{\otimes {n-1}}_{N}: G_N^{\otimes n-1}\otimes (G_N^{*})^{\otimes n-1} \rightarrow  \Z[q^{\pm 1},s^{\pm 1}] \\
&{\ev}^{\otimes {n-1}}_{N}(w)=\\
&=s^{(n-1)} \begin{cases}  
q^{-2\sum_{i=k}^{n-1}i_k}, \ \ w=v_{i_1}\otimes ... \otimes v_{i_{n-1}} \otimes v_{i_{n-1}}^* \otimes ... \otimes v_{i_1}^*\\
0, \ \ otherwise.
\end{cases}\\
&\hspace{-6mm} \bullet {\ev}^{\otimes {n-1}}_{\xi_N}: G_N^{\otimes n-1}\otimes (G_N^{*})^{\otimes n-1} \rightarrow  \Z[q^{\pm 1},s^{\pm 1}] \\
&{\ev}^{\otimes {n-1}}_{\xi_N}(w)=\\
&\hspace{-4mm}=s^{(n-1)(1-N)}\begin{cases}  
q^{-2(1-N)\sum_{i=k}^{n-1}i_k}, \ \ w=v_{i_1}\otimes ... \otimes v_{i_{n-1}} \otimes v_{i_{n-1}}^* \otimes ... \otimes v_{i_1}^*\\
0, \  \ otherwise.
\end{cases}
\end{aligned}
\end{equation}
\end{definition}
In the following part, we notice that this lifting of the evaluations and coevaluations towards two variables recovers the original ones, by appropriate specialisations. 

\begin{remark}\label{R:5}
We have the following properties:
\begin{equation}
\begin{aligned}
& \left( {\tcoev}^{\otimes {n-1}}_N \right)|_{\eta_{q,N-1}}={\tcoev}_{{V_N}^{\otimes {n-1}}}\\
& \left( {\tcoev}^{\otimes {n-1}}_N \right)|_{\eta_{\xi_N,\lambda}}={\tcoev}_{{U_{\lambda}}^{\otimes {n-1}}}\\
& \left( {\ev}^{\otimes {n-1}}_N \right)|_{\eta_{q,N-1}}={\ev}_{{V_N}^{\otimes {n-1}}}\\
& \left( {\ev}^{\otimes {n-1}}_{\xi_N} \right)|_{\eta_{\xi_N,\lambda}}={\ev}_{{U_{\lambda}}^{\otimes {n-1}}}\\
\end{aligned}
\end{equation}
\end{remark}
Moreover, we remark that the same property as the one mentioned after  equation \eqref{eq:6} holds for the truncated generic coevaluation. More precisely, the coevaluation  $Id \otimes {\tcoev}^{\otimes {n-1}}_N$ arrives in the weight space of weight $ \color{red}(n-1)(N-1)$ inside the mixt tensor product $G_N^{\otimes n}\otimes (G_N^{*})^{\otimes n-1}$.

Now, we twist the evaluations and coevaluations by a certain morphism which preserves the weights in the tensor product. Let us start with the following isomorphism of vector spaces:
\begin{equation}\label{eq:7}   
\begin{aligned}
& f:(G_N^{*})^{\otimes n-1}\rightarrow G_N^{\otimes n-1}\\
& f \left(v_{i_{n-1}}^{*}\otimes ... \otimes v_{i_1}^{*}\right)=
v_{N-1-i_{n-1}}\otimes ... \otimes v_{N-1-i_1},\\ 
 & \ \ \ \ \ \ \ \ \ \ \ \ \ \ \ \ \ \ \ \ \ \ \ \ \ \ \ \ \ \ \ \ \ \ \ \ \ \ \forall \ 0 \leq i_1,...,i_{n-1} \leq N-1.
\end{aligned}
\end{equation}
\begin{definition}(Deformed evaluation and coevaluation)\\ \label{D:5} 
We start with the function $f$ from above. We denote the $f$-deformed evaluations and coevaluation as follows:
\begin{equation}
\begin{aligned}
\bullet \ {\tcoev}^{\otimes {n-1}}_{f}\hspace{-1mm}: & \ \Z[q^{\pm 1},s^{\pm 1}] \rightarrow V^{N-1}_{2n-2,(n-1)(N-1)}\\
&{\tcoev}^{\otimes {n-1}}_{f}=\left( Id^{\otimes n-1}_{G_N}\otimes f\right) \circ {\tcoev}^{\otimes {n-1}}_{N} \\
\bullet \ \ { \ev}^{\otimes {n-1}}_{f} \hspace{-1mm}: & \ V^{N-1}_{2n-2,(n-1)(N-1)}  \rightarrow  \Z[q^{\pm 1},s^{\pm 1}] \\
&{\ev}^{\otimes {n-1}}_{f}=  {\ev}^{\otimes {n-1}}_{N} \circ \left( Id^{\otimes n-1}_{G_N}\otimes f^{-1}\right)\\
\bullet \ \ {\ev}^{\otimes {n-1}}_{f,\xi_N}\hspace{-1mm} : & \ V^{N-1}_{2n-2,(n-1)(N-1)}  \rightarrow  \Z[q^{\pm 1},s^{\pm 1}] \\
&{\ev}^{\otimes {n-1}}_{f,\xi_N}=  {\ev}^{\otimes {n-1}}_{\xi_N} \circ \left( Id^{\otimes n-1}_{G_N}\otimes f^{-1}\right).\\
\end{aligned}
\end{equation}
\end{definition}
\begin{remark}(Well defined)\\
We remark that the function $f$ given by equation \eqref{eq:7} preserves the weights. This property combined with equation \eqref{eq:6} leads to the following: 
\begin{equation} {\tcoev}_f^{\otimes {n-1}}(1) \in V^{N-1}_{2n-2,(n-1)(N-1)}\subseteq G_{N}^{\otimes 2n-2}.
\end{equation}
This shows that the twisted coevaluations from definition \ref{D:5} are indeed well defined with values in the weight space $V^{N-1}_{2n-2,(n-1)(N-1)}$.
\end{remark}
\begin{definition}(Extension of the deformed evaluation)\\ \label{D:5'} 
Let us extend the $f$-deformed evaluations from the ``small'' generic weight spaces presented in definition \ref{D:8} towards the weight spaces from the Verma module as follows:
\begin{equation}\label{eq:21}
\begin{aligned}
\bullet \tilde{ \ev}^{\otimes {n-1}}_{f}: \ & \hat{V}_{2n-2,(n-1)(N-1)}  \rightarrow  \Z[q^{\pm 1},s^{\pm 1}] \\
&\tilde{\ev}^{\otimes {n-1}}_{f}= 
\begin{cases}  
\ev^{\otimes {n-1}}_{f}(w), \ w \in V^{N-1}_{2n-2,(n-1)(N-1)} \\
0, \ \ \ otherwise.
\end{cases}\\
\bullet \tilde{\ev}^{\otimes {n-1}}_{f,\xi_N}: \ & \hat{V}_{2n-2,(n-1)(N-1)}  \rightarrow  \Z[q^{\pm 1},s^{\pm 1}] \\
&\tilde{\ev}^{\otimes {n-1}}_{f,\xi_N}=  \begin{cases}  
\ev^{\otimes {n-1}}_{f,\xi_N}(w), \ w \in V^{N-1}_{2n-2,(n-1)(N-1)} \\
0, \ \ \ otherwise.
\end{cases}\\
\end{aligned}
\end{equation}
\end{definition}
\begin{proposition}\label{P:4}  (Twisted evaluations and coevaluations do not depend on $f$)

If we act with $\beta_n \cup \unit_{n-1}$, we could use the deformed evaluation and coevaluation instead of the usual ones. More specifically, we have the following property:
\begin{equation}\label{eq1} 
\begin{aligned}
\left(Id\otimes \ev^{\otimes {n-1}}_{N} \right)|_{\eta_{q,N-1}} \circ \varphi^{q,N-1}_{{2n-1,(n-1)(N-1)}} \left(\beta_{n} \cup \bar{\mathbb I}_{n-1} \right) \circ \left(\Id \otimes {\tcoev}^{\otimes {n-1}}_{N}\right) |_{\eta_{q,N-1}}&\\
\left(v_0\right)=&\\
=\left(Id\otimes \ev^{\otimes {n-1}}_{f} \right)|_{\eta_{q,N-1}} \circ \varphi^{q,N-1}_{{2n-1,(n-1)(N-1)}} \left(\beta_{n} \cup {\mathbb I}_{n-1} \right) \circ \left(\Id \otimes {\tcoev}^{\otimes {n-1}}_{f}\right)|_{\eta_{q,N-1}}&\\
\left(v_0\right).&\\
\end{aligned}
\end{equation}
\begin{equation}\label{eq1'} 
\begin{aligned}
\left(Id\otimes \ev^{\otimes {n-1}}_{\xi_N} \right)|_{\eta_{\xi_N,\lambda}} \circ \varphi^{\xi_N,\lambda}_{{2n-1,(n-1)(N-1)}} \left(\beta_{n} \cup \bar{\mathbb I}_{n-1} \right) \circ \left(\Id \otimes {\tcoev}^{\otimes {n-1}}_{N}\right)|_{\eta_{\xi_N,\lambda}}&\\
\left(v_0\right)=&\\
=\left(Id\otimes \ev^{\otimes {n-1}}_{f} \right)|_{\eta_{\xi_N,\lambda}} \circ \varphi^{\xi_N,\lambda}_{{2n-1,(n-1)(N-1)}} \left(\beta_{n} \cup {\mathbb I}_{n-1} \right) \circ \left(\Id \otimes {\tcoev}^{\otimes {n-1}}_{f}\right)|_{\eta_{\xi_N,\lambda}}&\\
\left(v_0\right).
\end{aligned}
\end{equation}

\end{proposition}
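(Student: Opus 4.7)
The plan is to substitute the definitions of $\tcoev^{\otimes n-1}_f$ and $\ev^{\otimes n-1}_f$ from Definition \ref{D:5} into the right-hand side of \eqref{eq1}, and then exploit the fact that $\beta_n \cup \mathbb{I}_{n-1}$ acts trivially on the last $n-1$ strands to slide the deforming isomorphism $f$ through the braid action. Writing $\tcoev^{\otimes n-1}_f = (\Id^{\otimes n-1}_{G_N} \otimes f) \circ \tcoev^{\otimes n-1}_N$ and $\ev^{\otimes n-1}_f = \ev^{\otimes n-1}_N \circ (\Id^{\otimes n-1}_{G_N} \otimes f^{-1})$, the right-hand side rewrites as a composition in which $\Id^{\otimes n}_{G_N} \otimes f$ appears immediately after the coevaluation and $\Id^{\otimes n}_{G_N} \otimes f^{-1}$ immediately before the evaluation, sandwiching the operator $\varphi^{q,N-1}_{2n-1,(n-1)(N-1)}(\beta_n \cup \mathbb{I}_{n-1})$.

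Next I would invoke Reshetikhin--Turaev functoriality: since $\beta_n$ acts only on the first $n$ strands, the operator $\varphi^{q,N-1}_{2n-1,(n-1)(N-1)}(\beta_n \cup \mathbb{I}_{n-1})$ factors as the tensor product of the nontrivial braid action on $G_N^{\otimes n}$ with $\Id_{G_N^{\otimes n-1}}$ on the remaining factors. Because $f$ and $f^{-1}$ live on those last $n-1$ factors, they commute past the braid operator and combine into $f^{-1} \circ \Id_{G_N^{\otimes n-1}} \circ f = \Id_{(G_N^*)^{\otimes n-1}}$, which is precisely the RT image of $\bar{\mathbb{I}}_{n-1}$. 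The whole composition therefore collapses to the LHS of \eqref{eq1}. The argument for \eqref{eq1'} is identical, with $\ev^{\otimes n-1}_{f,\xi_N}$ in place of $\ev^{\otimes n-1}_f$ and the specialisation $\eta_{\xi_N,\lambda}$ in place of $\eta_{q,N-1}$; no step of the commutation is sensitive to which specialisation is used.

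The step that does require genuine checking is a well-posedness condition: one must confirm that $(\Id \otimes \tcoev^{\otimes n-1}_f)(v_0)$ actually lies in the weight space $V^{N-1}_{2n-1,(n-1)(N-1)}$ on which $\varphi^{q,N-1}_{2n-1,(n-1)(N-1)}$ and $\varphi^{\xi_N,\lambda}_{2n-1,(n-1)(N-1)}$ are defined. This amounts to verifying that $f$ preserves weights in the appropriate sense, which is the content of the remark following Definition \ref{D:5}. I expect this weight-bookkeeping to be the main technical point, since a direct $K$-eigenvalue computation shows that $f$ only matches the $K$-actions on $(G_N^*)^{\otimes n-1}$ and $G_N^{\otimes n-1}$ after the substitution $s^{2}=q^{2(N-1)}$ (respectively $s=\xi_N^{\lambda}$) has been applied, so the argument uses the specialisations $\eta_{q,N-1}$ and $\eta_{\xi_N,\lambda}$ crucially. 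Once this is tracked carefully, the remainder of the proof reduces to the one-line functoriality calculation $f^{-1}\circ \Id \circ f = \Id$.
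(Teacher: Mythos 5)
Your argument matches the paper's proof: both proceed by unpacking $\tcoev^{\otimes n-1}_f$ and $\ev^{\otimes n-1}_f$, noting that $f,f^{-1}$ act only on the last $n-1$ tensor slots where $\beta_n\cup\mathbb I_{n-1}$ is trivial, and cancelling them so that $f^{-1}\circ\Id_{G_N^{\otimes n-1}}\circ f=\Id_{(G_N^*)^{\otimes n-1}}$ realises the orientation reversal from $\mathbb I_{n-1}$ to $\bar{\mathbb I}_{n-1}$. The paper records exactly this as the diagram in its proof and then applies $\eta_{q,N-1}$ or $\eta_{\xi_N,\lambda}$ to obtain the two stated identities.

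One clarification on your last paragraph, since it slightly misidentifies the role of the specialisations. The well-posedness you raise — that $(\Id\otimes\tcoev^{\otimes n-1}_f)(v_0)$ lies in $V^{N-1}_{2n-1,(n-1)(N-1)}$ — does \emph{not} require applying $\eta_{q,N-1}$ or $\eta_{\xi_N,\lambda}$: a direct computation gives
\begin{equation*}
K\bigl(v_{i_1}\otimes\cdots\otimes v_{i_{n-1}}\otimes v_{N-1-i_{n-1}}\otimes\cdots\otimes v_{N-1-i_1}\bigr)=s^{2n-2}q^{-2(n-1)(N-1)}\,(\cdot),
\end{equation*}
generically over $\Z[q^{\pm1},s^{\pm1}]$. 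What you actually computed is $K$-equivariance of $f$ \emph{as a map} $(G_N^*)^{\otimes n-1}\to G_N^{\otimes n-1}$, which would require $s^2=q^{2(N-1)}$; but this is a different (and stronger) condition than what the proof needs, and in fact it fails for $\eta_{\xi_N,\lambda}$ with generic $\lambda$ (since $\xi_N^{2\lambda}\ne\xi_N^{2(N-1)}$ in general). The proof only needs the image of the full coevaluation — including the first $n-1$ factors of $G_N$ — to land in the correct weight space, and that holds over two variables. The RT cancellation $f^{-1}\circ\Id\circ f=\Id$ is an identity of vector-space maps, indifferent to any $K$-intertwining property. So, as in the paper, the diagrammatic relation holds over $\Z[q^{\pm1},s^{\pm1}]$ and the specialisations are applied only at the final step to match the statement — they are not load-bearing for the commutation argument.
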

\begin{proof}
We notice that the part of the tensor product where $f$ acts corresponds to the last $n-1$ components, where the braid is trivial. So all what we have to do is to reverse the orientation of the strait strands from $\bar{\mathbb I}_{n-1}$ to $\mathbb I_{n-1}$ and then cancel $f$ with $f^{-1}$. We draw below the corresponding picture, which encodes the morphisms that we need to compose in order to get the formulas from equation \eqref{eq1} and \ref{eq1'}, reading from bottom to top.

 \begin{equation} \label{p2}  
 \begin{aligned}
1) \text{ evaluation}                              
 \ \ \ \ \  \  \ \ \uparrow \tikz[x=1mm,y=1mm,baseline=0.5ex]{\draw[<-] (3,0) .. controls (3,3) and (0,3) .. (0,0); \draw[<-] (6,0) .. controls (6,6) and (-3,6) .. (-3,0); \draw[draw=none, use as bounding box](0,0) rectangle (3.5,3);} \ \ \ \ \ \ \ \  \ \uparrow \tikz[x=1mm,y=1mm,baseline=0.5ex]{\draw[<-] (3,0) .. controls (3,3) and (0,3) .. (0,0); \draw[<-] (6,0) .. controls (6,6) and (-3,6) .. (-3,0); \draw[draw=none, use as bounding box](0,0) rectangle (3.5,3);} \ \ \ \ \ \ \ 1') \text{ f-evaluation} \ \ \ \\
Id^{\otimes n-1}_{G_N}\otimes f^{-1}  \ \  \ \ \ \ \ \ \ \ \ \ \ \ \ \ \ \ \ \ \ \ \ \ \ \ \\
 2) \text{ braid level } \ \ \ \  \ \  \beta_n \  \cup \ \bar{\unit}_{n-1} \ \ \ \ \ \beta_n \  \cup \ \unit_{n-1} \ \ \ \ \ 2') \text{ braid level } \ \ \ \ \\
Id^{\otimes n-1}_{G_N}\otimes f  \ \ \ \  \ \ \ \ \ \ \ \ \ \ \ \ \ \ \ \ \ \ \ \ \ \ \ \ \\
3) \text{ coevaluation }                                 
 \ \ \ \  \uparrow  \tikz[x=1mm,y=1mm,baseline=0.5ex]{\draw[<-] (0,3) .. controls (0,0) and (3,0) .. (3,3); \draw[<-] (-3,3) .. controls (-3,-3) and (6,-3) .. (6,3); \draw[draw=none, use as bounding box](-0.5,0) rectangle (3,3);} \ \  \ \ \ \ \ \ \ \ \uparrow  \tikz[x=1mm,y=1mm,baseline=0.5ex]{\draw[<-] (0,3) .. controls (0,0) and (3,0) .. (3,3); \draw[<-] (-3,3) .. controls (-3,-3) and (6,-3) .. (6,3); \draw[draw=none, use as bounding box](-0.5,0) rectangle (3,3);} \ \ \ \ \ 3) \text{ f-coevaluation } \ 
\end{aligned}
\end{equation}
Now, if we specialise the relation corresponding to the diagram \ref{p2} from above through $\eta_{q,N-1}$ we obtain equation \eqref{eq1}. Similarly, the specialisation $\eta_{\xi_N,\lambda}$ leads to equation \eqref{eq1'}. 
\end{proof}
For our purpose, we will need to pass from a multiarc with all multiplicities less than $N-1$ to the corresponding code sequence. In order to do this, we will need to invert the correspoding coefficients, which are given by the following formula:
$$\mathscr C_{i_1,..,i_{n-1}}=\prod_{k=1}^{n-1}(i_k)_{q^{-2}}!~(N-1-i_k)_{q^{-2}}!, \ \ 0 \leq i_1,...,i_{n-1} \leq N-1.$$
This motivates the following definition.

\begin{definition}(Choice of normalisation)
We will work at a certain point over a slightly bigger ring, where we invert the quantum factorials smaller than $N-1$ and use the following notations:
\begin{equation}
\begin{aligned}
&\tilde{\Li}_N:=\Z[q^{\pm 1},s^{\pm 1}](I_N)^{-1} \subseteq \Q(q,s) \ \  \\
&I_N:=<(k)_{q^{-2}} \mid  0 \leq k \leq N-1 >.
\end{aligned}
\end{equation}
($I_N$ is the multiplicatively closed system generated by the above quantum numbers)
Then, let us consider the inclusion map $$\iota_N: \Z[q^{\pm 1},s^{\pm 1}] \rightarrow \Z[q^{\pm 1},s^{\pm 1}](I_N)^{-1}.$$
Further on, we will use the specialisation map
$\tilde{\gamma}_N:\Z[x^{\pm},d^{\pm}]\rightarrow \tilde{\Li}_N$, given by 
$$\tilde{\gamma}_N:=\iota_N \circ \gamma.$$
\end{definition}
We have presented in figure \ref{fig:7} all these specialisations as well as the relations between them.

In the following part we will discuss about the precise formulas for the dualities that lead to the two quantum invariants. We start from the remark that the evaluations for the coloured Jones case and for the coloured Alexander case ( presented in proposition \ref{P:2}), differ by the following factor: the first one is twisted the $K$ action whereas the evaluation at roots of unity has the action of the element $K^{1-N}$. Now, we remind that we can use the evaluations and coevaluations over two variables $q,s$ and recover the $U_q(sl(2))$ dualities through the two specialisations (as we have discussed in definition \ref{D:9} and remark \ref{R:5}). More precisely, we have the following formulas:
\begin{equation}\label{D:3}  
\begin{aligned}
\ev_{V^{\otimes n-1}_{N}}\left(w\right)&=\\
=\eta_{q,N-1} \circ &  
\begin{cases}
\begin{aligned}
&s^{(n-1)}q^{-2\sum_{i=k}^{n-1}i_k}, w=v_{i_1}\otimes ... \otimes v_{i_{n-1}} \otimes v_{i_{n-1}}^* \otimes ... \otimes v_{i_1}^*\\
&0, \ \ \ \ \ \ \ \ \ \ \ \ \ \ \ \ \ \ \ \ \ otherwise
\end{aligned}
\end{cases}\\
\end{aligned}
\end{equation}
\begin{equation}\label{E:4}  
\begin{aligned}
\ev & _{U^{\otimes n-1}_{\lambda}} \left(w\right) =\\
= \eta_{\xi_N,\lambda} \circ & 
\begin{cases}
\begin{aligned}
&s^{(n-1)(1-N)}q^{-2(1-N)\sum_{i=k}^{n-1}i_k}, w=v_{i_1}\otimes ... \otimes v_{i_{n-1}} \otimes v_{i_{n-1}}^* \otimes ... \otimes v_{i_1}^*\\
&0, \ \ \ \ \ \ \ \ \ \ \ \ \ \ \ \ \ \ \ \ \ otherwise.
\end{aligned}
\end{cases}
\end{aligned}
\end{equation}
We will keep in mind these coefficients, which motivate the following definiton.
\begin{notation}
For a fixed set of indices $i_1,...,i_{n-1} \in \{0,...,N-1 \}$, let us consider the corresponding polynomials in $\Z[q^{\pm1},s^{\pm 1 }]$ that encode these evaluation maps as follows:
\begin{equation}\label{E:2}
\begin{cases}
p_n^{N}(i_1,..,i_{n-1})=s^{(n-1)}q^{-2\sum_{i=k}^{n-1}i_k}\\
p_n^{\xi_N}(i_1,..,i_{n-1})=s^{(n-1)(1-N)}q^{-2(1-N)\sum_{k=1}^{n-1}i_k}.
\end{cases}
\end{equation}
\end{notation}
With these notations, we have that:
\begin{equation}
\begin{cases}
\eta_{q,N-1} \circ p_n^{N}(i_1,..,i_{n-1})=~ \ev_{V^{\otimes n-1}_{N}}v_{i_1}\otimes ... \otimes v_{i_{n-1}} \otimes v_{i_{n-1}}^* \otimes ... \otimes v_{i_1}^*)\\
\eta_{\xi_N,\lambda} \circ p_n^{\xi_N}(i_1,..,i_{n-1})=~ \ev_{U^{\otimes n-1}_{\lambda}}(v_{i_1}\otimes ... \otimes v_{i_{n-1}} \otimes v_{i_{n-1}}^* \otimes ... \otimes v_{i_1}^*).
\end{cases}
\end{equation}
\begin{remark}\label{R:9}
In the next sections we will use the extended evaluations introduced in definition \ref{D:5'}. Using remark \ref{R:5} together with the notations from above, we conclude the following property.

\begin{equation*} \label{eq:22}
\begin{aligned}
 \bullet \ & \tilde{ \ev}^{\otimes {n-1}}_{f}: \  \hat{V}_{2n-2,(n-1)(N-1)}  \rightarrow  \Z[q^{\pm 1},s^{\pm 1}] \\
&\tilde{\ev}^{\otimes {n-1}}_{f}(w)= 
\begin{cases}  
 p_n^{N}(i_1,..,i_{n-1}) , \ \text{ if } \exists \  0 \leq i_1,...,i_{n-1} \leq\ N-1 \text{ such that }\\
 \hspace{24mm} w=v_{i_1}\otimes ... \otimes v_{i_{n-1}} \otimes v_{N-1-i_{n-1}}\otimes ... \otimes v_{N-1-i_1} \\
\hspace{8mm }0, \hspace{35mm} otherwise.
\end{cases}\\
 \bullet \ & \tilde{\ev}^{\otimes {n-1}}_{f,\xi_N}: \  \hat{V}_{2n-2,(n-1)(N-1)}  \rightarrow  \Z[q^{\pm 1},s^{\pm 1}] \\
&\tilde{\ev}^{\otimes {n-1}}_{f,\xi_N}(w)= 
\begin{cases}  
p_n^{\xi_N}(i_1,..,i_{n-1}) , \ \text{ if } \exists \  0 \leq i_1,...,i_{n-1} \leq\ N-1 \text{ such that } \\
 \hspace{24mm} w=v_{i_1}\otimes ... \otimes v_{i_{n-1}} \otimes v_{N-1-i_{n-1}}\otimes ... \otimes v_{N-1-i_1} \\
\hspace{8mm }0, \hspace{35mm} otherwise.
\end{cases}\\
\end{aligned}
\end{equation*}

\end{remark}
\subsection{Change of the homological basis}\label{}
Now, we go back to the homological side. We will see in the main proof that for our case it will be more convenient to use code sequences instead of multiarcs. In the following part we prove that actually for the braid actions which occur in our constructions, those which have identity on the last strands, we will be able to use these elements instead.

Let us fix a colour $N \in \N$ and two parameters $n,m \in \N $. The identification from Theorem \ref{T:1} leads to the following correspondence:
\begin{equation} \label{L:4}  
\begin{aligned}
(B_n \cup \mathbb I_{n-1}) \curvearrowright \hat{V}_{2n-1,m} & \simeq H^{\text{lf},-}_{m}(\tilde{C}_{2n-1,m}, \Z)|{_{\gamma}}\curvearrowleft (B_n \cup \mathbb I_{n-1})\\
\Theta(v_{e_1}\otimes ... \otimes v_{e_{2n-1}})& =\mathscr F_{e}, \ \ \ \forall \  e=(e_1,...,e_{n-1}) \in E_{2n-1,m}.
\end{aligned}
\end{equation}
We want to pass from the multiarc $\mathscr F_e$ to the corresponding code sequence. In order to do that, we use the following algebraic remark. 
\begin{lemma}\label{L:1}
Let $g: \hat{V}_{2n-1,m}|_{\iota_N} \rightarrow \hat{V}_{2n-1,m}|_{\iota_N}$ be an isomorphism of modules such that:
\begin{equation}
\begin{aligned}
g(v_{e_1}\otimes...\otimes v_{e_{2n-1}})= y(e_{n+1},...,e_{2n-1}) \ & v_{e_1} \otimes...\otimes v_{e_{2n-1}},\\
& \forall e=(e_1,...,e_{2n-1}) \in E_{2n-1,m}
\end{aligned}
\end{equation}
 with $y(e_{n+1},...,e_{2n-1}) \in \tilde{\Li}_N$. 
 In other words, $y$ can be seen as a rational function which depends only on the last $n-1$ coordinates of the $(2n-1)$-tuple $e$.
Then $$\Theta\circ g: \hat{V}_{2n-1,m}|_{\iota_N} \simeq H^{\text{lf},-}_{m}(\tilde{C}_{2n-1,m}, \Z)|{_{\tilde{\gamma}_N}}$$ is still an equivariant isomorphism with respect to the $B_n \cup \mathbb I_{n-1}$-action and the bases correspond as follows:
$$\Theta\circ g \left(g^{-1}(v_{e_1}\otimes ... \otimes v_{e_{2n-1}})\right)=\mathscr F_{e}.$$.
\end{lemma}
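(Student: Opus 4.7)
The plan is to reduce the lemma to two observations: that $\Theta$ is already equivariant for the $B_n\cup\mathbb{I}_{n-1}$-action (this is just Theorem \ref{T:1} restricted to braids whose last $n-1$ strands are trivial), and that $g$ itself commutes with this action. Once these are in hand, equivariance of the composition $\Theta\circ g$ follows formally, and the basis correspondence $(\Theta\circ g)(g^{-1}(v_e))=\mathscr F_e$ is simply $\Theta(v_e)=\mathscr F_e$ by the definition in \eqref{L:4}.

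The key point is thus to verify that $g$ commutes with $\hat{\varphi}_{2n-1,m}(\beta_n\cup\mathbb{I}_{n-1})$ for every $\beta_n\in B_n$. First I would observe that, since the $R$-matrix acts only on the strands being crossed, the operator $\hat{\varphi}_{2n-1,m}(\beta_n\cup\mathbb{I}_{n-1})$ acts as identity on the last $n-1$ tensor components. Explicitly, for any basis vector $v_e=v_{e_1}\otimes\cdots\otimes v_{e_{2n-1}}$ we can write
\begin{equation*}
\hat{\varphi}_{2n-1,m}(\beta_n\cup\mathbb{I}_{n-1})(v_e)=\sum_{e'} c_{e,e'}\, v_{e'_1}\otimes\cdots\otimes v_{e'_n}\otimes v_{e_{n+1}}\otimes\cdots\otimes v_{e_{2n-1}},
\end{equation*}
where the sum is over tuples $e'\in E_{2n-1,m}$ whose last $n-1$ entries coincide with those of $e$ (and $c_{e,e'}\in\tilde{\Li}_N$).

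Now apply $g$ to both sides. On the left, $g\circ\hat{\varphi}_{2n-1,m}(\beta_n\cup\mathbb{I}_{n-1})(v_e)$ multiplies each output summand by $y(e_{n+1},\dots,e_{2n-1})$, because the last $n-1$ indices of every summand agree with those of $e$. On the right, $\hat{\varphi}_{2n-1,m}(\beta_n\cup\mathbb{I}_{n-1})\circ g(v_e)=y(e_{n+1},\dots,e_{2n-1})\cdot\hat{\varphi}_{2n-1,m}(\beta_n\cup\mathbb{I}_{n-1})(v_e)$, which produces exactly the same expression. Hence $g$ and $\hat{\varphi}_{2n-1,m}(\beta_n\cup\mathbb{I}_{n-1})$ commute.

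From this commutation together with the equivariance of $\Theta$ from Theorem \ref{T:1}, the composition $\Theta\circ g$ is equivariant with respect to the $(B_n\cup\mathbb{I}_{n-1})$-action. Invertibility of $\Theta\circ g$ is immediate since both factors are isomorphisms, and the basis identification is tautological: $(\Theta\circ g)(g^{-1}(v_e))=\Theta(v_e)=\mathscr F_e$ for every $e\in E_{2n-1,m}$, concluding the proof. There is no real obstacle here; the only subtlety is simply keeping track of the fact that the scalar $y$ depends exactly on the coordinates that the partial braid action leaves untouched, which is what makes the diagonal twist $g$ compatible with this (restricted) braid representation.
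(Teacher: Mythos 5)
Your proof is correct and follows the same route as the paper's one-line argument: you show $g$ commutes with the restricted action $\hat{\varphi}_{2n-1,m}(\beta_n\cup\mathbb{I}_{n-1})$ precisely because that action leaves the last $n-1$ tensor indices unchanged while $g$ only depends on them, and then equivariance of $\Theta\circ g$ follows from the equivariance of $\Theta$ in Theorem \ref{T:1}. Your write-up simply makes explicit the step that the paper leaves implicit, and the basis correspondence is, as you note, tautological.
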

\begin{proof}
We notice that the isomorphism $g$ is equivariant with respect to the action of braids from $B_n \cup \mathbb I_{n-1} $. Combining this with the property that $\Theta$ is equivariant with respect to the $B_{2n-1}$-action, we obtain the desired correspondence. 
\end{proof}
\begin{remark}
This lemma will allow us to use the code sequences instead of the normalised multiarcs in the specialised cases. We will have to work over $\tilde{\Li}_N$, but actually the elements that we are interested in will be defined over $\Li$, so overall we will obtain results over $\Li$.
\end{remark}
\section{Construction of the generic homology classes} \label{5}

In this section we aim to construct certain homology classes in the generic Lawrence representation, which correspond to the evaluation and coevaluation on the quantum side. Since the evaluations for the root of unity case and generic case differ by a coefficient, we will take this into account in our construction. In the following two sections, we will prove that these classes specialised to the two cases, the generic case and the root of unity case lead to the coloured Jones polynomial and coloured Alexander polynomial respectively. 

In the first part, we aim to construct a homology class in the dual homology of the covering  $H^{\partial}_{2n-1,(n-1)(N-1)}$ that will encode the algebraic evaluation. This will be done in two main steps: first we will find the right manifold in the base configuration space and secondly we will choose a good lift to the covering space. 

\subsection*{ Step I- Choice of a good geometric support}

\

For the following part, for $i_1,..,i_{n-1}\in \{0,...,N-1\}$ we will denote by $$\bar{U}_{0,i_1,...,i_{n-1},N-1-i_{n-1},...,N-1-i_{1}}$$ the submanifold which has the same support as $U_{0,i_1,...,i_{n-1},N-1-i_{n-1},...,N-1-i_{1}}$, but which is oriented using the orientations of each individual red segment as in picture \ref{fig4}. More specifically, we change the orientations of the red segments which arrive in a puncture with the index bigger than $n$.
\begin{remark}\label{sign}
The effect of this change of orientations on the corresponding homology classes (given by lifts of these submanifolds) will be that they will differ by the sign $(-1)^{n-1}$.
\end{remark}
Let us fix an arbitrary lift of the submanifold $\bar{U}_{0,i_1,...,i_{n-1},N-1-i_{n-1},...,N-1-i_{1}}$ in $\tilde{C}_{2n-1,(n-1)(N-1)}$ and denote its class by $$\tilde{\mathscr U}_{0,i_1,...,i_{n-1},N-1-i_{n-1},...,N-1-i_{1}} \in H_{2n-1,(n-1)(N-1)}.$$
\begin{figure}[H]
$${\color{red} \Huge \tilde{\mathscr U}_{0,i_1,...,i_{n-1},N-1-i_{n-1},...,N-1-i_{1}} \in H_{2n-1,(n-1)(N-1)}} \ \ \ \ \ \ \ \ \ \ \ \ \ {\color{green} \tilde{G}_{n}^N \in H^{\partial}_{2n-1,(n-1)(N-1)}} \ \ \ \ \ 
$$
\centering
\includegraphics[scale=0.5]{multinoodles2.pdf}
$${\color{red} \Huge  \bar{U}_{0,i_1,...,i_{n-1},N-1-i_{n-1},...,N-1-i_{1}}} \ \ \ \ \ \ \ \ \ \ \ \ {\color{green} {G}_{n}^N} \ \ \ \ \ \ \ \    C_{2n-1,(n-1)(N-1)} 
$$
\vspace{-5 mm}
\caption{}
\label{fig4}
\end{figure}
\begin{definition}(Dual manifold given by an arbitrary lift)

Let us consider the immersed submanifold $ G_{n}^N \subseteq C_{2n-1,(n-1)(N-1)}$ given by the product of $(n-1)$ configuration spaces of $N-1$ points on eights around the symmetric punctures, as in figure \ref{fig4}. 

For the first part, we consider an arbitrary lift of ${G}_{n}^N$ to the covering $\tilde{C}_{2n-1,(n-1)(N-1)}$ and denote its homology class by $$\tilde{G}_{n}^N \in H^{\partial}_{2n-1,(n-1)(N-1)}.$$ 
\end{definition}
\begin{lemma}(Figure-eight intersection)\label{R:2}\\
The intersection pairing between these classes has the following form:
\begin{equation*}
<\tilde{\mathscr U}_{0,i_1,...,i_{n-1},N-1-i_{n-1},...,N-1-i_{1}}, \tilde{G}_{n}^N>=x^{m(i_1,...,i_{n-1})}d^{m'(i_1,...,i_{n-1})}
\end{equation*}
where $m,m'$ are polynomials in $(n-1)$ variables, which depend on the particular choice of the lift $\tilde{G}_{n}^N$ in the covering space.
\end{lemma}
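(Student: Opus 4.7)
The plan is to apply the intersection formula from Proposition~\ref{P:3}, which reduces the computation of the graded intersection pairing in the covering $\tilde{C}_{2n-1,(n-1)(N-1)}$ to a sum over transverse geometric intersection points of the base-space submanifolds $\bar U := \bar U_{0,i_1,\dots,i_{n-1},N-1-i_{n-1},\dots,N-1-i_1}$ and $G_n^N$, each weighted by the local system $\phi$ evaluated on an associated loop $l_x$.

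First, I would analyse the geometric intersection $\bar U \cap G_n^N$ inside $C_{2n-1,(n-1)(N-1)}$. The manifold $\bar U$ is supported on products of red segments from $w$ to the punctures with multiplicities $0,i_1,\dots,i_{n-1},N-1-i_{n-1},\dots,N-1-i_1$, while $G_n^N$ is a product of $(n-1)$ configuration spaces of $N-1$ points on figure-eights, the $k$-th eight encircling the symmetric pair of punctures $k$ and $2n-k$. An intersection configuration requires that all $N-1$ points on the $k$-th eight lie on one of the two red segments to these punctures; since those red segments carry exactly $i_k$ and $N-1-i_k$ points respectively and the configurations are unordered, the combinatorial distribution is uniquely forced, so there is a single intersection configuration. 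Each red segment meets each lobe of the corresponding eight in a single point in the punctured disc, and the product of these one-dimensional transverse meetings produces a transverse intersection in the configuration space with geometric multiplicity $\pm 1$, the sign being absorbed into the change of orientation described in Remark~\ref{sign}.

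Next I would apply the explicit formula of Proposition~\ref{P:3} to this single intersection point $x$: take reference paths $\gamma_{\bar U},\delta_{G_n^N}$ from $\mathbf d$ into $\bar U$ and into the chosen lift of $G_n^N$, and close them up along the two submanifolds to reach $x$, obtaining a loop $l_x \subseteq C_{2n-1,(n-1)(N-1)}$. Applying the local system $\phi$ from \eqref{eq:23} to $l_x$ reads two combinatorial invariants: the total winding of the configuration points around the punctures (giving the exponent of $x$) and the total algebraic number of half-swaps of the points (giving the exponent of $d$). Both counts can be computed eight-by-eight, as a sum of local contributions indexed by the pair $(i_k,N-1-i_k)$, hence give polynomial expressions $m(i_1,\dots,i_{n-1})$ and $m'(i_1,\dots,i_{n-1})$.

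Finally, I would observe that replacing the chosen lift $\tilde G_n^N$ by another lift alters $\phi(l_x)$ by a single fixed monomial $x^u d^v$ coming from the corresponding deck transformation of Definition~\ref{D:12}, which only shifts $m$ and $m'$ by constants and preserves the polynomial form stated in the lemma. The main technical obstacle will be the bookkeeping in the previous step: keeping track of the windings that the red configuration points accumulate as they travel along the lobes of the eights to reach $x$, and of the crossings between distinct configuration strands during this travel, is what actually pins down the polynomials $m$ and $m'$ and shows that the pairing is a pure monomial (with no cancellations from multiple intersection configurations, which is guaranteed by the uniqueness established in the first step).
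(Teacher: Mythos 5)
Your proposal follows essentially the same route as the paper's proof: you establish that $\bar U_{0,i_1,\dots,i_{n-1},N-1-i_{n-1},\dots,N-1-i_1}$ and $G_n^N$ meet in a unique transverse point of $C_{2n-1,(n-1)(N-1)}$ by the same eight-by-eight distribution count, and then conclude via Proposition~\ref{P:3} (equivalently Definition~\ref{D:55}) that the pairing is a single group-ring element, i.e.\ a monomial $x^{m}d^{m'}$, with the exponents depending on the lift chosen.

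One genuine imprecision: the lemma asserts the coefficient of the monomial is exactly $+1$, so you must rule out the local sign being $-1$. You attribute the sign to Remark~\ref{sign}, but that remark only records the global $(-1)^{n-1}$ relating the classes of $U_e$ and $\bar U_e$; it says nothing about the local orientation at the unique intersection point. The paper instead argues directly that, with the orientation on $\bar U$, the tangent frame given by the red segments together with the tangent direction of the figure-eight is positively oriented at each of the $(n-1)(N-1)$ planar intersection points, hence the induced orientation in $C_{2n-1,(n-1)(N-1)}$ gives geometric multiplicity $+1$. You would still need to supply this local orientation check to close the argument.
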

\begin{proof}
We notice that each $i \in \{1,...,n-1\}$, the space of configurations of $(N-1)$ points on the fixed figure eight around the punctures $i$ and $2n-1-i$ intersects uniquely the collection of red segments with multiplicities ($e_i$, $N-1-e_i$). 

Doing this for all pairs indexed by $i \in \{1,...,n-1\}$, we conclude that $$\bar{U}_{0,i_1,...,i_{n-1},N-1-i_{n-1},...,N-1-i_{1}}  \ \ \ \ \text {    and    } \ \ \ \ G_{n}^N$$ intersect in an unique point $\bf p$ in the configuration space $C_{2n-1,(n-1)(N-1)}$. Thus, in order to compute the intersection between their lifts:
$\tilde{\mathscr U}_{0,i_1,...,i_{n-1},N-1-i_{n-1},...,N-1-i_{1}}$ and $ \tilde{G}_{n}^N$, we only need to compute the scalar that corresponds to the point $\bf p$. The definition of the intersection form from definition \ref{D:55}, shows that this scalar will be an element of the deck transformation group, given by a monomial in $x$ and $d$ which we denote by $x^{m(i_1,...,i_{n-1})}d^{m'(i_1,...,i_{n-1})}$.

Related to the sign that occurs in this computation, we notice that for all intersection points from above, the orientation given by the tangent vectors at the red segments and the tangent vector at the figure eight is always positive.
\end{proof}
\subsection*{ Step II- Choice of particular lifts} \label{S:1}  

\

Now, we pass to the second part and we will proceed explicit computations. We aim to choose particular lifts for the submanifolds  $\bar{U}_{0,i_1,...,i_{n-1},N-1-i_{n-1},...,N-1-i_{1}}$ and  ${G}_{n}^N$ such that their intersection pairing is easy to compute. More precisely, we will define two paths in the base configuration space, whose lifts through $\bf \tilde{d}$ will prescribe the lifts for the two submanifolds:

\vspace{-4.5mm}
\begin{figure}[H]
\centering
$${\color{red} \tilde{\mathscr U}_{0,i_1,...,i_{n-1},N-1-i_{n-1},...,N-1-i_{1}} \in H_{2n-1,(n-1)(N-1)}} \ \ \text{ and }\ \  {\color{green}\mathscr G_{n}^N \in H^{\partial}_{2n-1,(n-1)(N-1)}}.$$
\includegraphics[scale=0.5]{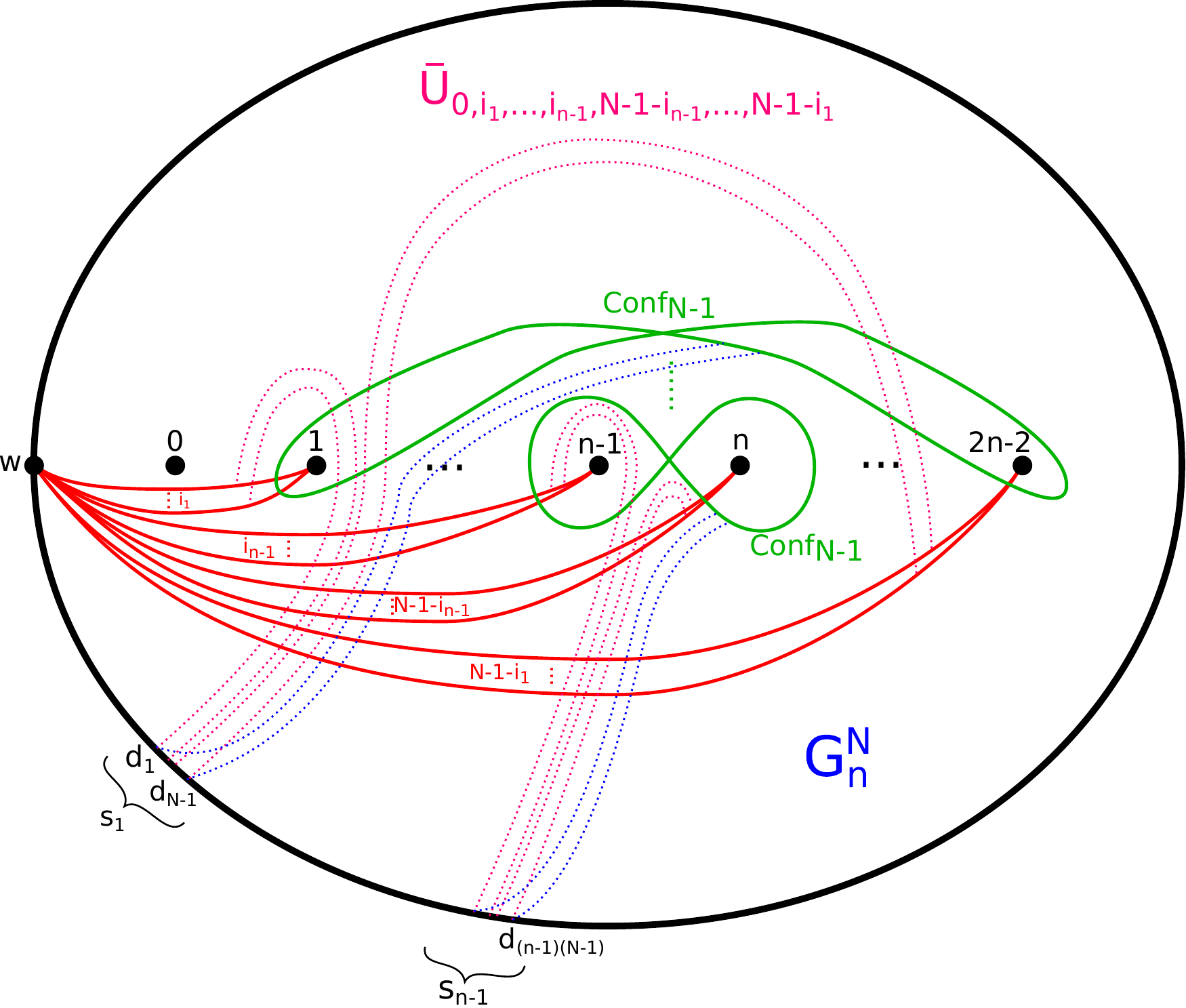}
\vspace{-3mm}
\caption{}
\label{fig5}
\end{figure}
\begin{definition}(Paths in the configuration space)\\ \label{D:4} 
Let us start by splitting the set of base points into $(n-1)$ sets, each of which has $N-1$ points as follows:
\begin{equation}
\begin{cases}
S_1=\{d_1,...,d_{N-1}\}\\
S_2=\{d_N,...,d_{2N-1}\}\\
...\\
S_{n-1}=\{d_{(N-1)(n-2)+1},...,d_{(N-1)(n-1)}\}. 
\end{cases}
\end{equation}
Then, from each set $S_k \ (k \in \{1,...,n-1 \})$, we construct the following paths:
\begin{itemize}
\item[1)] $N-1$ paths to the red segments which end at the punctures $(k, 2n-1-k)$
\item[2)] $N-1$ paths towards the figure eight which ends/ go around the punctures $(k, 2n-1-k)$, as in figure \ref{fig5}. 
\end{itemize}
We denote these sets of paths, which form two paths in the configuration space $C_{2n-1,(n-1)(N-1)}$ by:
\begin{itemize}
\item[1)] $\sigma^F_{i_1,...,i_{n-1}}$-~from the base point ${\bf d}=\{d_1,...,d_{(n-1)(N-1)}\} $ towards the submanifold $\bar{U}_{0,i_1,...,i_{n-1},N-1-i_{n-1},...,N-1-i_{1}}$
\item[2)] $\sigma^G$-~from the base point ${\bf d}=\{d_1,...,d_{(n-1)(N-1)}\}$ to ${G}_{n}^N$. 
\end{itemize}
\end{definition}
\begin{notation}(Lifts of the base paths)\label{N:2}  

Let us consider $\tilde{\sigma}^F_{i_1,...,i_{n-1}}$ and $\tilde{\sigma}^G$ to be the lifts of the paths $\sigma^F_{i_1,...,i_{n-1}}$ and $\sigma^G$ to the covering, which satisfy the following:
\begin{equation}
\tilde{\sigma}^F_{i_1,...,i_{n-1}}(0)={\bf \tilde{d}} \ \ \ \text{ and } \ \ \tilde{\sigma}^G(0)={\bf \tilde{d}}.
\end{equation}
\end{notation}
\begin{definition}(Homology classes given by these paths)\label{D:10}

a) Let $\tilde{\mathscr U}_{0,i_1,...,i_{n-1},N-1-i_{n-1},...,N-1-i_{1}} \in H_{2n-1,(n-1)(N-1)}$ be the homology class given by the lift of $ \bar{U}_{0,i_1,...,i_{n-1},N-1-i_{n-1},...,N-1-i_{1}}$ through $\tilde{\sigma}^F_{i_1,...,i_{n-1}}(1)$. 

b) Similarly, we define $ \mathscr G_{n}^N \in H^{\partial}_{2n-1,(n-1)(N-1)}$ to be the class given by the lift of the immersed submanifold $ G_{n}^N$ in the covering of the configuration space, through $\tilde{\sigma}^G(1)$. 
\end{definition}
\begin{lemma}(Intersection pairing between the chosen homology classes)\label{R:1}\\
The intersection pairing between these particular lifts gives the following:
\begin{equation}
\begin{aligned}
<\tilde{\mathscr U}_{0,i_1,...,i_{n-1},N-1-i_{n-1},...,N-1-i_{1}}, & \mathscr G_{n}^N>=1\\
  & \forall \  i_1,...,i_{n-1} \in \{0,...,N-1\}.
\end{aligned}
\end{equation}
\end{lemma}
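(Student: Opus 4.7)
The plan is to apply the explicit formula from Proposition \ref{P:3} with the two base paths furnished by Definition \ref{D:4}. Take $\gamma_M := \sigma^F_{i_1,\ldots,i_{n-1}}$ and $\delta_N := \sigma^G$; by Notation \ref{N:2} and Definition \ref{D:10}, the lifts $\tilde\sigma^F(1)$ and $\tilde\sigma^G(1)$ are precisely the points through which we lifted $\bar U_{0,i_1,\ldots,i_{n-1},N-1-i_{n-1},\ldots,N-1-i_1}$ and $G_n^N$ to obtain the classes $\tilde{\mathscr U}$ and $\mathscr G_n^N$. By Lemma \ref{R:2}, the submanifolds $\bar U$ and $G_n^N$ meet transversely in a single point $\mathbf{p} \in C_{2n-1,(n-1)(N-1)}$, and the geometric sign at $\mathbf{p}$ was observed there to be $\alpha_{\mathbf{p}} = +1$. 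Hence the pairing collapses to the single monomial $\phi(l_{\mathbf{p}})$, and the whole task is to verify $\phi(l_{\mathbf{p}}) = 0 \in \mathbb{Z}\oplus\mathbb{Z}$.

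Next, I would choose the in-submanifold paths $\bar\gamma_M \subseteq \bar U$ and $\bar\delta_N \subseteq G_n^N$ block by block. The set of base points splits as $S_1 \sqcup \cdots \sqcup S_{n-1}$; for each $k$, the $N-1$ points of $S_k$ are distributed along the two red segments from $w$ to the punctures $k$ and $2n-1-k$ at the end of $\sigma^F$, and along the two lobes of the $k$-th figure eight at the end of $\sigma^G$. By construction of $\mathbf{p}$, in both cases these $N-1$ points end up on the same two radial arcs issuing from $w$, with the same multiplicities $(i_k, N-1-i_k)$. Therefore $\bar\gamma_M$ and $\bar\delta_N$ can be taken to slide the points radially, within each block, from their ``fork positions'' to their ``eight positions'' without self-collisions and without encircling any puncture.

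The main obstacle is to show that the composite loop
\begin{equation*}
l_{\mathbf{p}} \;=\; \sigma^G \cdot \bar\delta_N \cdot \bar\gamma_M^{-1} \cdot (\sigma^F)^{-1}
\end{equation*}
has trivial image under $\phi = p \circ \rho$; equivalently, that the total winding of the moving strands around the punctures and the total number of half-swaps between pairs of strands both vanish. The key geometric input is that $\sigma^F$ and $\sigma^G$ were designed in Definition \ref{D:4} as \emph{parallel companions}: in each block $k$, the $N-1$ points of $S_k$ are pushed toward the puncture pair $(k, 2n-1-k)$ along essentially the same arcs, the only difference being that $\sigma^F$ terminates on the straight fork segments while $\sigma^G$ continues along the figure eight (see Figure \ref{fig5}). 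Concatenating $\sigma^G$ with the reverse of $\sigma^F$ produces a based loop whose $k$-th block is a null-homotopic loop in the punctured disc, and a direct inspection shows that within a block no two strands perform a half-twist around each other; the various blocks are supported near disjoint pairs of punctures and so contribute independently. Consequently the $\rho$-class of $l_{\mathbf{p}}$ has trivial augmentation in both coordinates.

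Combining the two ingredients with Proposition \ref{P:3} yields
\begin{equation*}
\langle \tilde{\mathscr U}_{0,i_1,\ldots,i_{n-1},N-1-i_{n-1},\ldots,N-1-i_1},\ \mathscr G_n^N \rangle \;=\; \alpha_{\mathbf{p}} \cdot \phi(l_{\mathbf{p}}) \;=\; (+1) \cdot x^0 d^0 \;=\; 1,
\end{equation*}
which is the claimed identity. The moral is that the particular lifts prescribed by the auxiliary paths $\sigma^F$ and $\sigma^G$ are precisely calibrated to cancel the deck-transformation monomial $x^{m(i_1,\ldots,i_{n-1})} d^{m'(i_1,\ldots,i_{n-1})}$ that Lemma \ref{R:2} allowed for an arbitrary lift, so that the intersection pairing normalises to $1$ uniformly in $(i_1,\ldots,i_{n-1})$.
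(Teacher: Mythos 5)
Your proposal is correct and follows essentially the same route as the paper: apply Proposition \ref{P:3} at the unique intersection point from Lemma \ref{R:2}, observe $\alpha_{\mathbf p}=+1$, and show the associated loop has trivial image under $\phi$ by checking both the winding around punctures (the $x$-exponent) and the mutual braiding of the moving points (the $d$-exponent) vanish, thanks to the parallel design of $\sigma^F$ and $\sigma^G$. The paper organises the verification slightly more finely by splitting $l_{\mathbf p}$ into one closed component loop per intersection point $x^k_r$, but your block-by-block decomposition is an equivalent way of seeing the same cancellation.
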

\begin{proof}
We compute the intersection pairing using the formula presented in equation \eqref{eq:1}. In order to do this, we have three main steps: 
\begin{enumerate}
\item first we need to compute the sign which corresponds to the geometric intersection
\item find out the power of the variable $d$ 
\item compute the power of the variable $x$ which corresponds to the appropriate deck transformation.
\end{enumerate}
{\bf 1) Sign from the orientations } As we have seen in Lemma \ref{R:2}, the submanifolds $\bar{U}_{0,i_1,...,i_{n-1},N-1-i_{n-1},...,N-1-i_{1}}$ and $G_n^N$ intersect exactly in one point in the configuration space and all the local signs coming from their orientations are positive. 

In order to compute the next part, we introduce the following notations. For $k \in \{1,...,n-1\}$ let us denote by:
\begin{itemize}
\item $x^k_1,...,x^k_{i_k}$ to be the intersection points between the $k^{th}$ figure eight and the red segments with multiplicity $i_k$
\item $x^k_{i_k+1},...,x^k_{N-1}$ to be the intersection points between the $k^{th}$ figure eight and the red segments with multiplicity $N-1-i_k$.
\end{itemize}
We denote the corresponding point in $C_{2n-1,(n-1)(N-1)}$ by: $$ {\bf x}= \left\lbrace x^1_{1},..,x^{1}_{N-1},..., x^{n-1}_{1},..,x^{n-1}_{N-1} \right\rbrace.$$
\begin{figure}[H]
\centering
\includegraphics[scale=0.5]{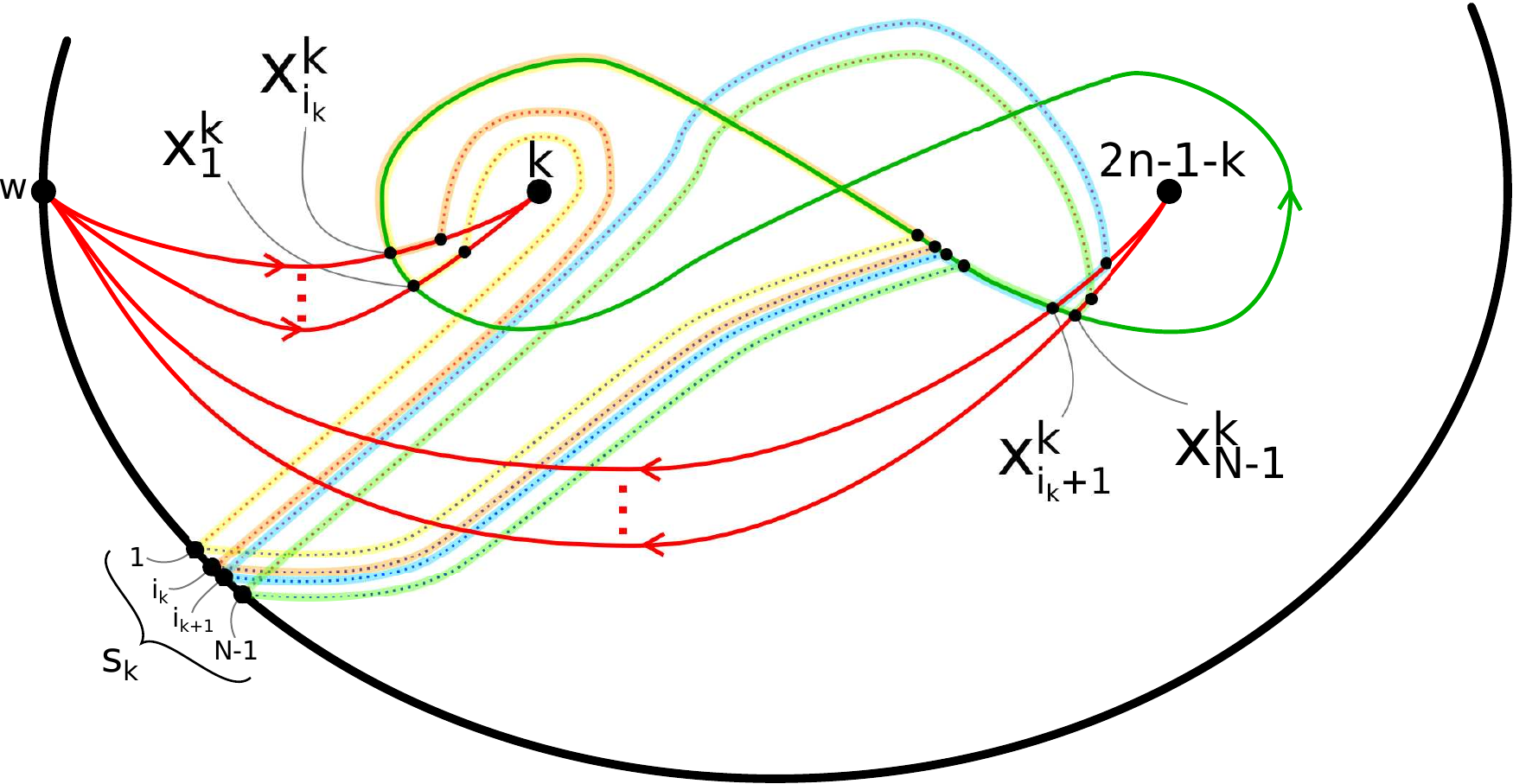}
\caption{}
\label{fig6}
\end{figure}
This multipoint gives exactly the intersection between the two submanifolds in the configuration space:
$$\bar{U}_{0,i_1,...,i_{n-1},N-1-i_{n-1},...,N-1-i_{1}} \cap G_n^N= {\bf x}$$
With this notation, we have $\alpha_{\bf x}=1$.
Then, the intersection pairing between the two lifts $\tilde{\mathscr U}_{0,i_1,...,i_{n-1},N-1-i_{n-1},...,N-1-i_{1}}$ and $ \mathscr G_{n}^N$ is given by the element from the deck transformation corresponding to this unique intersection point.  

Now we look at the loop $l_{\bf x}$ associated to $\bf x$. For each intersection point $x^k_r$, let $l_{x^k_{r}}$ be the corresponding path, constructed following the recipe given in proposition \ref{P:3}. We have drawn in figure \ref{fig6} the loops 
\definecolor{corn}{rgb}{0.98, 0.75, 0.6}
\definecolor{babyblue}{rgb}{0, 0.8, 0.92}
$$\color{black!20!yellow} l_{x^k_{1}},...,\color{orange}l_{x^k_{i_k}}, \color{babyblue}l_{x^k_{i_{k+1}}},...,\color{green}l_{x^k_{N-1}}$$
corresponding to the intersection points around the punctures $(k,2n-1-k)$.

Then, we consider the loop in the configuration space given by the union of these paths:
$$l_{\bf x}=\left\lbrace  l_{x^1_1},...,l_{x^{n-1}_{N-1}} \right\rbrace.$$
Using formula \eqref{eq:1}, we obtain that the intersection between the two lifts can computed using the loop $l_{\bf x}$ and the local system: 
\begin{equation}
<\tilde{\mathscr U}_{0,i_1,...,i_{n-1},N-1-i_{n-1},...,N-1-i_{1}}, \mathscr G_{n}^N>=\phi(l_{\bf x}).
\end{equation}
In the sequel, we compute $\phi(l_{\bf x})$ in two steps.

{\bf 2) Rotation in the configuration space: coefficient of $d$}

Grace to the choice of the base points, we remark that the paths $l_{x^1_1},...,l_{x^{n-1}_{N-1}}$ are all closed loops. Moreover, we notice that they have no winding number one around the other (as paths in the configuration space). This shows that the exponent corresponding to the power of $d$ in the evaluation of the loop $l_{\bf x}$ is zero.

{\bf 3) Winding number around punctures: coefficient of $x$}

The last part that remains to be checked concerns the winding around punctures. Looking at each individual loop $l_{x^k_r} $ in the punctured disc, we see that it has no winding around any puncture.

 These two steps show that the variables $x$ and $d$ do not appear in the intersection formula. Putting everything together, we conclude that:
$$\phi(l_{\bf x})=1$$ and so we have the desired formula for the intersection pairing.
 \end{proof}
\subsection{First and Second homology classes}
\begin{definition}(Second homology class)\label{shc}

We fix the class given by this choice of the lift $\mathscr G_{n}^N \in H^{\partial}_{2n-1,(n-1)(N-1)}$ to be the homology class which will be used for the model from Theorem \ref{THEOREM}.
\end{definition}

In this moment, we have two types of homology classes which are geometrically dual one to the other. In the next part we aim to construct the first homology class for our topological model. Let us start with the class $\mathscr{P}_n^N$ which is the sum of all classes given by manifolds with $\bar{U}$-supports as below.
\begin{definition}\label{D:7}  
Let us consider $\mathscr{P}_n^N\in H_{2n-1,(n-1)(N-1)}$ given by:
\begin{equation}
\mathscr P_n^N:=\sum_{i_1,...,i_{n-1}=0}^{N-1} \tilde{\mathscr U}_{0,i_1,...,i_{n-1},N-1-i_{n-1},...,N-1-i_{1}}.
\end{equation}
\end{definition}
This will be the direct correspondent of the coevaluation from the algebraic side. 

On the other hand, since the algebraic evaluation is twisted by the action of the quantum group, we need to take this into account on the homological side. We choose to modify the first class $\mathscr P_n^N$ (encoding this twisting) and keep $\mathscr G_{n}^N$ as the second homology class which will correspond to the evaluation without any twisting.
\begin{definition}(Global class over two variables)\label{fhc}

Let us consider $\mathscr{E}_n^N\in H_{2n-1,(n-1)(N-1)}$ given by:
\begin{equation}
\mathscr E_n^N:=\sum_{i_1,...,i_{n-1}=0}^{N-1} d^{\sum_{k=1}^{n-1}i_k} \  \tilde{\mathscr U}_{0,i_1,...,i_{n-1},N-1-i_{n-1},...,N-1-i_{1}}.
\end{equation}
\end{definition}
In order to make the connection with the evaluations that occur for the generic case and the root of unity case respectively, the computations of $p_n^N$ and $p_n^{\xi_N}$ from equation \eqref{E:2} lead us to the following classes.
\begin{definition}(Global classes for the generic and roots of unity case)\label{E:3}\\
Let us consider $\mathscr{F}_n^N, \mathscr{F}_n^{\xi_N}\in H_{2n-1,(n-1)(N-1)}|_{{\gamma}}$ given by:
\begin{equation}
\mathscr F_n^N:=s^{(n-1)} \mathscr E_n^N|_{{\gamma}}; \ \ \  \mathscr F_n^{\xi_N}:=s^{(1-N)(n-1)} \mathscr E_n^N|_{{\gamma}}.
\end{equation}
More precisely, we have that:
\begin{equation}\label{eq:19} 
\begin{aligned}
\mathscr F_n^N:= \ \ \ s^{(n-1)} \ \ \ \sum_{i_1,...,i_{n-1}=0}^{N-1} q^{-2 \sum_{k=1}^{n-1}i_k}   \tilde{\mathscr U}_{0,i_1,...,i_{n-1},N-1-i_{n-1},...,N-1-i_{1}}|_{\gamma}\\
\mathscr F_n^{\xi_N}:=s^{(1-N)(n-1)}\sum_{i_1,...,i_{n-1}=0}^{N-1} q^{-2\sum_{k=1}^{n-1}i_k} \tilde{\mathscr U}_{0,i_1,...,i_{n-1},N-1-i_{n-1},...,N-1-i_{1}}|_{\gamma}.
\end{aligned}
\end{equation}
\end{definition}
We end this section with a remark which will be very useful for the topological models from the next two sections. 

\begin{proposition}(Relation between specialisations of the homology classes)\label{P:8}\\ 
With these notations, we have the following properties:
\begin{equation}
\begin{aligned}
&\mathscr F_n^N|_{\eta_{q,N-1}}=\sum_{i_1,...,i_{n-1}=0}^{N-1} \eta_{q,N-1} \circ p_n^{N}(i_1,..,i_{n-1})\\
&\hspace{40mm}  \tilde{\mathscr U}_{0,i_1,...,i_{n-1},N-1-i_{n-1},...,N-1-i_{1}}|_{\psi_{q,N-1}}\\
&\mathscr F_n^{\xi_N}|_{\eta_{\xi_N,\lambda}}:=\sum_{i_1,...,i_{n-1}=0}^{N-1} \eta_{\xi_N,\lambda} \circ p_n^{\xi_N}(i_1,..,i_{n-1}) \\
&\hspace{40mm} \tilde{\mathscr U}_{0,i_1,...,i_{n-1},N-1-i_{n-1},...,N-1-i_{1}}|_{\psi_{\xi_N,\lambda}}.
\end{aligned}
\end{equation}
\end{proposition}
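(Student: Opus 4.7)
The proof will be a direct computation, unpacking each side of the two claimed equalities using the explicit definitions. My plan is to treat the two formulas separately and reduce each to an identification of scalar coefficients, once we observe that the specialised homology classes on both sides are the same.

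First I would recall the chain of specialisations. The class $\mathscr E_n^N$ lives over $\Z[x^{\pm 1},d^{\pm 1}]$, and the maps $\mathscr F_n^N$, $\mathscr F_n^{\xi_N}$ are obtained by applying $\gamma$ to it and multiplying by a power of $s$. The composition $\eta_{q,N-1}\circ\gamma$ sends $x\mapsto q^{2(N-1)}$ and $d\mapsto q^{-2}$, which is precisely the specialisation $\psi_{q,N-1}$; similarly $\eta_{\xi_N,\lambda}\circ\gamma=\psi_{\xi_N,\lambda}$. Consequently, in both equalities the underlying homology classes $\tilde{\mathscr U}_{0,i_1,\dots,i_{n-1},N-1-i_{n-1},\dots,N-1-i_1}|_{\psi}$ agree on the two sides, and the statement reduces to verifying an equality of scalar coefficients in front of each $\tilde{\mathscr U}$.

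For the generic part, I would start from \eqref{eq:19} and apply $\eta_{q,N-1}$ term by term. The factor $s^{(n-1)}$ becomes $q^{(n-1)(N-1)}$, while $q^{-2\sum_k i_k}$ is invariant under $\eta_{q,N-1}$. Thus the coefficient of $\tilde{\mathscr U}_{\dots}|_{\psi_{q,N-1}}$ equals $q^{(n-1)(N-1)} q^{-2\sum_k i_k}$. On the other hand, from the definition $p_n^{N}(i_1,\dots,i_{n-1})=s^{(n-1)}q^{-2\sum_k i_k}$ of \eqref{E:2}, the map $\eta_{q,N-1}$ produces exactly the same scalar, so the first equality holds term by term.

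For the root-of-unity part the computation is the same in structure but contains one subtle point, which I expect is the only non-trivial step. Applying $\eta_{\xi_N,\lambda}$ to $\mathscr F_n^{\xi_N}$ yields the coefficient $\xi_N^{(1-N)(n-1)\lambda}\,\xi_N^{-2\sum_k i_k}$, whereas $\eta_{\xi_N,\lambda}\circ p_n^{\xi_N}(i_1,\dots,i_{n-1})$ gives $\xi_N^{(1-N)(n-1)\lambda}\,\xi_N^{-2(1-N)\sum_k i_k}$. To reconcile the two I would use that $\xi_N=e^{2\pi i/(2N)}$ is a $2N$-th root of unity, so $\xi_N^{2N}=1$ and hence
\begin{equation*}
\xi_N^{-2(1-N)\sum_k i_k}=\xi_N^{-2\sum_k i_k}\cdot\xi_N^{2N\sum_k i_k}=\xi_N^{-2\sum_k i_k}.
\end{equation*}
This matches the two sides and concludes the argument.

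The only step that requires attention is this last observation at the root of unity, which is exactly the kind of phenomenon that makes the two polynomial pairings collapse to the same specialisation of the homology class $\mathscr E_n^N$; everything else is a direct substitution of the definitions of $\gamma$, $\eta_{q,N-1}$, $\eta_{\xi_N,\lambda}$ and of the polynomials $p_n^N$, $p_n^{\xi_N}$ from \eqref{E:2}.
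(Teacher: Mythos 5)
Your proof is correct and follows the same route as the paper: decompose $\psi_{q,N-1}=\eta_{q,N-1}\circ\gamma$ (resp.\ $\psi_{\xi_N,\lambda}=\eta_{\xi_N,\lambda}\circ\gamma$), reduce both identities to matching scalar coefficients in front of each $\tilde{\mathscr U}$, and for the root-of-unity case observe that $\xi_N^{2N}=1$ kills the extra factor $\xi_N^{2N\sum_k i_k}$ coming from $p_n^{\xi_N}$. The paper phrases this last step as the ``key remark'' $\eta_{\xi_N,\lambda}(p_n^{\xi_N}(i_1,\dots,i_{n-1}))=\eta_{\xi_N,\lambda}(s^{(1-N)(n-1)}q^{-2\sum_k i_k})$, which is exactly the identity you verified; you simply spell out the arithmetic in slightly more detail.
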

\begin{proof}
We remind the following property regarding the specialisations of coefficients (see diagram \ref{fig:7}):
\begin{equation}
\psi_{q,N-1}=\eta_{q,N-1} \circ \gamma.
\end{equation}
The first equation comes directly from the definition of the class $\mathscr F_n^N$ and the formula for $p_n^N$  from equation \eqref{E:2}.
The second formula has a subtlety. There, the specialisation at roots of unity plays a very important role and the key remark is the following relation:
\begin{equation}
\eta_{\xi_N,\lambda} \left( p_n^{\xi_N}(i_1,..,i_{n-1}) \right)= \eta_{\xi_N,\lambda}\left(   s^{(1-N)(n-1)} q^{-2\sum_{k=1}^{n-1}i_k} \right).
\end{equation}
This equation combined with the definition of the class $\mathscr F_n^{\xi_N}$ concludes the second relation. 
\end{proof}
\section{Topological intersection model for the coloured Jones invariants} \label{6}
In this part, we prove the topological model for the coloured Jones polynomials, as stated in Theorem \ref{THEOREM}. We remind the description of this invariant, as presented in equation \eqref{eq:J}:
\begin{equation}\label{eq:J1}
\begin{aligned}
J_N(L,q)=&q^{-(N-1)w(\beta_n)} \ \pi \circ \\
&\left(\left(Id\otimes \ev_{V_N^{\otimes {n-1}}} \right) \circ \varphi^{V_{N}}_{2n-1} \left(\beta_{n} \cup \bar{\mathbb I}_{n-1} \right) \circ \left(\Id \otimes {\tcoev}_{V_N^{\otimes {n-1}}}\right) \right) (v_0).
\end{aligned}
\end{equation}
\subsection{Step I-Invariant through the weight spaces}\label{SCJ:1}
Let us start with the morphism that corresponds to the bottom part of the diagram. The first step is based on the remark which tells us that this morphism arrives in a particular weight space, as shown in equation \eqref{eq:6}. 

More specifically, we have that $Id \otimes {\tcoev}_{V_N^{\otimes {n-1}}}(v_0)$ belongs to the weight space of weight $ \color{red}(n-1)(N-1)$ inside $V_N^{\otimes n}\otimes (V_N^{*})^{\otimes n-1}$. Since braid group actions preserve weight spaces, we can see the whole invariant through this particular weight space, as below.
\begin{equation}
\begin{aligned}
J_N(L,q)=& q^{-(N-1)w(\beta_n)} \ \pi \circ
\left( Id\otimes \ev_{V_N^{\otimes {n-1}}} \right) 
\circ  \\
& \circ \varphi^{q,N-1}_{2n-1,(n-1)(N-1)} \left(\beta_{n} \cup \bar{\mathbb I}_{n-1} \right) \circ
\left(\Id \otimes {\tcoev}_{V_N^{\otimes {n-1}}}\right) (v_0).
\end{aligned}
\end{equation}
Using the lift of evaluations and coevaluations from definition \ref{D:9} together with remark \ref{R:5}, we obtain the following formula:
\begin{equation}
\begin{aligned}
J_N(L,q)=& q^{-(N-1)w(\beta_n)} \ \pi \circ
\left( Id\otimes \ev_{N}^{\otimes {n-1}}|_{\eta_{q,N-1}} \right) 
\circ  \\
& \circ \varphi^{q,N-1}_{2n-1,(n-1)(N-1)} \left(\beta_{n} \cup \bar{\mathbb I}_{n-1} \right) \circ
\left(\Id \otimes {\tcoev}_{N}^{\otimes {n-1}}|_{\eta_{q,N-1}}\right) (v_0).
\end{aligned}
\end{equation}

Now we will use the morphism $f$ of the form presented in equation \eqref{eq:7}. Then, proposition \ref{P:4} tells us that if we add the extra morphisms corresponding to this twisting function, they do not change the definition of the invariant.
\begin{equation}
\begin{aligned}
\hspace{-6mm}J_N(L,q)= & q^{-(N-1)w(\beta_n)} \ \pi \circ \left( Id\otimes \ev_{N}^{\otimes {n-1}} \right) {\color{blue}\left( Id \otimes f^{-1}|_{\eta_{q,N-1}}\right) } \circ \\
& \hspace{20mm}\circ \varphi^{q,N-1}_{2n-1,(n-1)(N-1)} \left(\beta_{n} \cup {\mathbb I}_{n-1} \right) \circ \\
& \hspace{20mm}\circ {\color{blue}\left( Id\otimes f|_{\eta_{q,N-1}} \right) } \circ  \left(\Id \otimes {\tcoev}_{N}^{\otimes {n-1}}\right) (v_0).
\end{aligned}
\end{equation}
 In the following expression, we use the normalised evaluations and coevaluations from definition \ref{D:5}, which lead to the following formula:
\begin{equation}\label{eq:12}
\begin{aligned}
J_N(L,q)=&q^{-(N-1)w(\beta_n)}\pi \circ \left(Id\otimes \ev^{\otimes {n-1}}_{f}{|_{\eta_{q,N-1}} } \right) \\
& \circ \varphi^{q,N-1}_{2n-1,(n-1)(N-1)} \left(\beta_{n} \cup {\mathbb I}_{n-1} \right) \circ \left( \Id \otimes {\tcoev}_f^{\otimes {n-1}}{|_{\eta_{q,N-1}}} \right) (v_0).
\end{aligned}
\end{equation}
\subsection{Step II-Using the weight spaces from the Verma module}\label{SCJ:2} Having in mind that there are homological correspondents for the weight spaces in the Verma module, we aim to use these ones instead of the ``small'' weight spaces. More precisely, let us consider the embedding of the weight spaces into the generic ones as below: 
$$\iota:V^{q,N-1}_{2n-1,(n-1)(N-1)}\hookrightarrow \hat{V}^{q,N-1}_{2n-1,(n-1)(N-1)}.$$
\begin{remark}\label{R:3}   
Based on the properties presented in subsection \ref{S:quan}, we remark that the quantum representation specialised by $\psi_{q,N-1}$ preserves small weight spaces inside the weight spaces from generic Verma module. More precisely, we have the following commutative diagram:
\begin{center}
\begin{tikzpicture}
[x=1.2mm,y=1.4mm]

\node (b1)  [color=blue]             at (0,10)    {$V^{q,N-1}_{2n-1,(n-1)(N-1)}$};
\node (t1) [color=black] at (30,10)   {$\hat{V}^{q,N-1}_{2n-1,(n-1)(N-1)}$};
\node (b2) [color=blue] at (0,0)  {$\varphi^{q,N-1}_{2n-1,(n-1)(N-1)}$};
\node (t2)  [color=black]             at (30,0)    {$\hat{\varphi}^{q,N-1}_{2n-1,(n-1)(N-1)}$};
\node (d2) [color=black] at (15,10)   {$\hookrightarrow$};
\node (d2) [color=black] at (15,12)   {$\iota$};
\node (d2) [color=black] at (0,5)   {$\circlearrowleft$};
\node (d2) [color=black] at (30,5)   {$\circlearrowleft$};
\node (d2) [color=black] at (15,5)   {$\equiv$};

\
\end{tikzpicture}
\end{center}
\end{remark}
Further on, if we use the $f$-coevaluation and compose it with this inclusion, it leads to the same thing as the extended coevaluation (which is defined in definition \ref{D:5'}) and so we have that:
\begin{equation}
\iota \circ \left(\Id \otimes {\tcoev}_f^{\otimes {n-1}}{|_{\eta_{q,N-1}}}\right)=\tilde{{\tcoev}}_f^{\otimes {n-1}}{|_{\eta_{q,N-1}}}.
\end{equation}
Moreover, thanks to remark \ref{R:3}, the whole construction corresponding to the first two floors (the cups and the braid part) arrives anyway in the small weight space. So, when we close up the formula by the evaluation (corresponding to the caps), we could use the extended evaluation instead. 

Combining all these remarks we conclude that we can describe the invariant through the weight spaces from the Verma module as below:
\begin{equation}\label{eq:13}  
\begin{aligned}
J_N(L,q)=&q^{-(N-1)w(\beta_n)}\pi \circ \left(Id\otimes \tilde{\ev}^{\otimes {n-1}}_{f}{|_{\eta_{q,N-1}} }\right) \circ \\
& \circ \hat{\varphi}^{q,N-1}_{2n-1,(n-1)(N-1)} \left(\beta_{n} \cup {\mathbb I}_{n-1} \right) \circ \left(\Id \otimes \tilde{{\tcoev}}_f^{\otimes {n-1}}{|_{\eta_{q,N-1}}}\right) (v_0).
\end{aligned}
\end{equation}

\subsection{Step III-Twisting the evaluations and coevaluations corresponding to the geometric part}\label{SCJ:3}

\

From now on we pass towards the homological part. We will do this using the construction of the homology classes presented in section \ref{5}, specialised by $\psi_{q,N-1}$.

We start with the formulas from equation \eqref{eq:7} and definition \ref{D:5'} which lead to the following:
\begin{equation} 
\begin{aligned}
&\hspace{-10mm} \left(Id\otimes \tilde{{\tcoev}}_f^{\otimes {n-1}}\right)(v_0)=\\
=& \sum_{i_1,...,i_{n-1}=0}^{N-1}  \ v_0 \otimes v_{i_1}\otimes ... \otimes v_{i_{n-1}} \otimes v_{N-1-i_{n-1}}\otimes ... \otimes v_{N-1-i_1}.
\end{aligned}
\end{equation}
This gives the following formula for the invariant:
\begin{equation}\label{eq:8}
\begin{aligned}
& \hspace{-9mm}J_N(L,q)=q^{-(N-1)w(\beta_n)}  \ \pi \circ {\color{green} (Id\otimes \ev^{\otimes {n-1}}_{f}{|_{\eta_{q,N-1}} })} \ \circ \\
& \hspace{4mm} \circ \hat{\varphi}^{q,N-1}_{2n-1,(n-1)(N-1)} (\beta_{n} \cup {\mathbb I}_{n-1} )\\
 & \hspace{5mm}\left(  \sum_{i_1,...,i_{n-1}=0}^{N-1}  \ v_0 \otimes v_{i_1}\otimes ... \otimes v_{i_{n-1}} \otimes v_{N-1-i_{n-1}}\otimes ... \otimes v_{N-1-i_1} \right).
\end{aligned}
\end{equation}

In the next part we aim pass from the quantum side from above towards the topological one. Looking at the identification from Theorem \ref{T1} we have:

\begin{equation}
\begin{aligned}
\hat{\varphi}_{2n-1,(n-1)(N-1)} \hspace{20mm} l_{2n-1,(n-1)(N-1)} \ \ \ \ \ \ \ \ &\\
(B_n \cup \mathbb I_{n-1}) \curvearrowright \hspace{25mm} \curvearrowleft (B_n \cup \mathbb I_{n-1}) \ \ \ \ \ \ \ \ &\\
 \hat{V}_{2n-1,(n-1)(N-1)}  \ \simeq \ H^{\text{lf},-}_{(n-1)(N-1)}(\tilde{C}_{2n-1,(n-1)(N-1)}, \Z)|{_{\gamma}}&\\
\hspace{-25mm} \Theta\left(v_0 \otimes v_{i_1}\otimes ... \otimes v_{i_{n-1}} \otimes v_{N-1-i_{n-1}}\otimes ... \otimes v_{N-1-i_1}\right) =\hspace{1mm}& \\
&\hspace{-40mm}= \mathscr F_{(0,i_1,...,i_{n-1}, N-1-i_{n-1},...,N-1-i_{1})}.
\end{aligned}
\end{equation}

This shows that the coevaluation corresponds to a sum of multiarcs. However, as we have discussed in section \ref{S:1}, we are interested in using code sequences instead. Concerning this question,  proposition \ref{E:1} tells us that the difference occurs just in certain coefficients:
$$\mathscr F_e=\frac{1}{\prod_{i=1}^{n} (e_i)_{d}!} \cdot \ x^{\frac{1}{2}\sum_{i=1}^{n}(i-1) e_i} \cdot \U_{e}, \forall e \in E_{n,m}.
$$
This shows that:
\begin{equation}
\begin{aligned}
&\mathscr F_{(0,i_1,...,i_{n-1}, N-1-i_{n-1},...,N-1-i_{1})}= \frac{1}{\prod_{k=1}^{n-1} (i_k)_{d}!(N-1-i_k)_{d}!}  \cdot \\
& \hspace{10mm} \cdot x^{\frac{1}{2}\sum_{k=1}^{n-1}\left(k i_k+(2n-1-k)(N-1-i_k)\right)} \cdot  \U_{(0,i_1,...,i_{n-1}, N-1-i_{n-1},...,N-1-i_{1})}.
\end{aligned}
\end{equation}
On the other hand, in subsection \ref{S:1} we worked on the geometrical side in order to understand a good pairing between the two types of homologies and we have chosen very particular classes for which the pairing was easy to compute. More precisely, we have constructed in definition \ref{D:10} $\tilde{\U}_{(0,i_1,...,i_{n-1}, N-1-i_{n-1},...,N-1-i_{1})}$ to be another lift of the same geometric submanifold $\bar{U}_{(0,i_1,...,i_{n-1}, N-1-i_{n-1},...,N-1-i_{1})}$ from $C_{2n-1,(n-1)(N-1)}$. This means that $$\tilde{\U}_{(0,i_1,...,i_{n-1}, N-1-i_{n-1},...,N-1-i_{1})} \ \text {  and  } \ \U_{(0,i_1,...,i_{n-1}, N-1-i_{n-1},...,N-1-i_{1})}$$ differ by an element of the deck transformations 
(up to the sign $(-1)^{n-1}$ given by orientations, as in remark \ref{sign})
\begin{remark}
For the next part, we look at the classes 
$$\tilde{\U}_{(0,i_1,...,i_{n-1}, N-1-i_{n-1},...,N-1-i_{1})} \ \text {  and  } \ \U_{(0,i_1,...,i_{n-1}, N-1-i_{n-1},...,N-1-i_{1})}$$ 
in the homology $H^{\text{lf},-}_{(n-1)(N-1)}(\tilde{C}_{2n-1,(n-1)(N-1)}, \Z)$ and later we will look at them in our version of the homology $H_{2n-1,(n-1)(N-1)}$.
\end{remark}
More concretely, we have:
\begin{equation}
\begin{aligned}
\U_{(0,i_1,...,i_{n-1}, N-1-i_{n-1},...,N-1-i_{1})}& = (-1)^{n-1}x^{\alpha(i_1,...,i_{n-1})}d^{\beta(i_1,...,i_{n-1})} \cdot \\&\cdot \tilde{\U}_{(0,i_1,...,i_{n-1}, N-1-i_{n-1},...,N-1-i_{1})}
\end{aligned}
\end{equation}
for some integer numbers $\alpha(i_1,...,i_{n-1}),\beta(i_1,...,i_{n-1})$. Putting these together we conclude the following relation:
\begin{equation}
\begin{aligned}
\tilde{\U}_{(0,i_1,...,i_{n-1}, N-1-i_{n-1},...,N-1-i_{1})}= \ &{x^{- \frac{1}{2}\sum_{k=1}^{n-1}\left(k i_k+(2n-1-k){(N-1-i_k)}\right)}}\cdot\\
\cdot  \prod_{k=1}^{n-1} (i_k)_{d}!(N-1-i_k)_{d}! & \cdot  (-1)^{n-1}   x^{-\alpha(i_1,...,i_{n-1})}d^{-\beta(i_1,...,i_{n-1})}\\ 
& \cdot \mathscr F_{(0,i_1,...,i_{n-1}, N-1-i_{n-1},...,N-1-i_{1})}.
\end{aligned}
\end{equation}
In the following part, we want to define the coefficients of the twisting function $g$ that we will use for the identification. We start with the following definition.
\begin{definition}(Choice of coefficients)

For a fixed set of indices $i_1,...,i_{n-1} \in \{0,...,N-1\}$, let us define the coefficient $c(i_1,...,i_{n-1}) \in \Z[q^{\pm 1}, s^{\pm 1}]$ which counts exactly the difference between $ \mathscr F_{(0,i_1,...,i_{n-1}, N-1-i_{n-1},...,N-1-i_{1})}$ and $\tilde{\U}_{(0,i_1,...,i_{n-1}, N-1-i_{n-1},...,N-1-i_{1})}$ (computed above), specialised by the function $\gamma$:
\begin{equation}
\begin{aligned} 
c(i_1,...,i_{n-1})& =
 (-1)^{n-1} {s^{-\sum_{k=1}^{n-1}\left(k i_k+(2n-1-k)(N-1-i_k)\right)}} \cdot \\
 & \cdot  s^{-2\alpha(i_1,...,i_{n-1})}q^{2\beta(i_1,...,i_{n-1})} \cdot  {\prod_{k=1}^{n-1} (i_k)_{q^{-2}}!(N-1-i_k)_{q^{-2}}! }\cdot\\ 
\end{aligned}
\end{equation}
\end{definition}
With this particular choice of coefficients we have:
\begin{equation}\label{eq:9}   
\begin{aligned}
\tilde{\mathscr U}_{(0,i_1,...,i_{n-1}, N-1-i_{n-1},...,N-1-i_{1})}& =c(i_1,...,i_{n-1}) \cdot \\
\cdot & \mathscr F_{(0,i_1,...,i_{n-1}, N-1-i_{n-1},...,N-1-i_{1})}\\
& \forall \  0\leq i_1,...,i_{n-1}\leq N-1.
\end{aligned}
\end{equation}
Now we want to use the identification between homological and quantum representation as in Lemma \ref{L:1}. We aim to correlate the basis given by monomials from the weight spaces with the corresponding code sequences. In order to do that, we use a function $g$ whose twisting coefficients correspond exactly to the change between the code sequence and the normalised multiarcs. More precisely, we choose the coefficients as follows.
\begin{definition} (Twisting of the quantum basis via the function $g$)\label{D:6} \\
Let us define the function $g$ as in \ref{L:1}, corresponding to the following coefficients:
\begin{equation}
\begin{aligned}
& y(e_{n+1},...,e_{2n-1})=\\
&=\begin{cases}
c(N-1-e_{2n-1},...,N-1-e_{n+1}), \  if \  0\leq e_{n+1},...,e_{2n-1}\leq N-1\\
 \ \ \ \ \ \ \ 1,  \ \ \ \ \ \ \ \ \ \ \ \ \ \ \ \ \ otherwise.
\end{cases}
\end{aligned}
\end{equation}
\end{definition}
\begin{remark}\label{R:4} 
Using this twisting function $g$, we have the following correspondence over $\tilde{\Li}_N$, with respect to the braid group action $B_n \cup \mathbb I_{n-1}$, for any indices $i_1,...,i_{n-1} \in \{0,...,N-1\}$:
\begin{equation}
\begin{aligned}
 c(i_1,...,i_{n-1})^{-1} \ v_0 \otimes v_{i_1}\otimes ... \otimes v_{i_{n-1}} \otimes v_{N-1-i_{n-1}}\otimes ... \otimes v_{N-1-i_1}\\ \longleftrightarrow^{\color{blue} \Theta \circ g}  \ \ \ \ \ \ \ \ \ \ \ \ \F_{(0,i_1,...,i_{n-1}, N-1-i_{n-1},...,N-1-i_{1})}. 
 \end{aligned}
 \end{equation}
Using equation \eqref{eq:9}, this is equivalent to:
\begin{equation}\label{eq:10''}
\begin{aligned}
 v_0 \otimes v_{i_1}\otimes ... \otimes v_{i_{n-1}} \otimes v_{N-1-i_{n-1}}\otimes ... \otimes v_{N-1-i_1}\\ \longleftrightarrow^{\color{blue} \Theta \circ g}  \ \ \ \ \ \ \ \ \ \ \ \ \tilde{\U}_{(0,i_1,...,i_{n-1}, N-1-i_{n-1},...,N-1-i_{1})}. 
 \end{aligned}
 \end{equation}
\end{remark}
\begin{notation}
Let us consider the set of ``symmetric idices'' as follows:
\begin{equation}
\begin{aligned}
E^{N,symm}=\{e=(0,e_1,...,e_{2n-2})\in E_{2n-1,(n-1)(N-1)}\mid & e_i=N-1-e_{2n-1-i}\\
& \hspace{7mm}\forall 1 \leq i \leq n-1 \}.
\end{aligned}
\end{equation}

\end{notation}
\begin{corollary}\label{C:2}
From the identification from equation \eqref{eq:10''}, which holds over $\tilde{\Li}_N$, we conclude the following correspondence over $\Z[s^{\pm1}, q^{\pm 1}]$: 
\begin{equation}\label{eq:10}
\begin{aligned}
\hat{\varphi}_{2n-1,(n-1)(N-1)} \hspace{20mm} l_{2n-1,(n-1)(N-1)}|_{\gamma} \ \ \ \ \ \ \ \ \ \ \  \ \ &\\
(B_n \cup \mathbb I_{n-1}) \curvearrowright \hspace{15mm} \curvearrowleft (B_n \cup \mathbb I_{n-1}) \hspace{29mm} &\\
  \ \ \ \ \ \ \ \ \ \ \ \hat{V}_{2n-1,(n-1)(N-1)}  \ \simeq \ H^{\text{lf},-}_{(n-1)(N-1)}(\tilde{C}_{2n-1,(n-1)(N-1)}, \Z)|_{\gamma} &\\
 {\color{blue} \Theta' \circ g} \left(v_0 \otimes v_{i_1}\otimes ... \otimes v_{i_{2n-2}} \right) =\hspace{30mm}& \\
& \hspace{-95 mm }=\begin{cases}
\tilde{\U}_{(0,i_1,...,i_{n-1}, N-1-i_{n-1},...,N-1-i_{1})}, \text{ if } (0,i_1,...,i_{2n-2}) \in E^{N,symm}\\
\F_{(0,i_1,...,i_{2n-2})}, \ \ \ \ \ \ \ \ \ \ \ \ \text{otherwise}.
\end{cases}
\end{aligned}
\end{equation}
\end{corollary}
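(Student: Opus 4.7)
The plan is to lift the identification from Remark \ref{R:4}, which holds only over the localised ring $\tilde{\Li}_N$, down to a statement over the honest polynomial ring $\Z[s^{\pm 1},q^{\pm 1}]$. The key observation is that although $g^{-1}$ requires inverting quantum factorials and hence needs the ring $\tilde{\Li}_N$, the map $g$ itself multiplies each basis vector by a scalar lying in $\Z[s^{\pm 1},q^{\pm 1}]$. Therefore, even if $g$ fails to be an isomorphism over the smaller ring, the composition $\Theta' \circ g$ is well-defined as an equivariant $\Z[s^{\pm 1},q^{\pm 1}]$-linear map between the weight space $\hat{V}_{2n-1,(n-1)(N-1)}$ and its homological counterpart $H^{\text{lf},-}_{(n-1)(N-1)}(\tilde{C}_{2n-1,(n-1)(N-1)},\Z)|_{\gamma}$.

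First I would verify that $y$ defined in Definition \ref{D:6} takes values in $\Z[s^{\pm 1},q^{\pm 1}]$. Inspecting the formula for $c(i_1,\dots,i_{n-1})$, every ingredient — the sign, the monomials in $s$ and $q$ coming from $\alpha$ and $\beta$, and the quantum factorials $(i_k)_{q^{-2}}!\cdot (N-1-i_k)_{q^{-2}}!$ — is a polynomial in $q^{\pm 1}$ and $s^{\pm 1}$, so $y\in \Z[s^{\pm 1},q^{\pm 1}]$. Consequently $g$ restricts to a $\Z[s^{\pm 1},q^{\pm 1}]$-linear endomorphism of the unlocalised weight space, and Theorem \ref{T:1} already provides $\Theta': \hat{V}_{2n-1,(n-1)(N-1)} \simeq H^{\text{lf},-}_{(n-1)(N-1)}(\tilde{C}_{2n-1,(n-1)(N-1)},\Z)|_{\gamma}$ over this ring.

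Next I would evaluate $\Theta' \circ g$ on each standard basis vector $v_0 \otimes v_{i_1}\otimes \dots \otimes v_{i_{2n-2}}$. If the tuple $(0,i_1,\dots,i_{2n-2})$ lies in $E^{N,symm}$ then by Definition \ref{D:6} we have $g(v_e) = c(i_1,\dots,i_{n-1})\, v_e$, so $\Theta' \circ g(v_e) = c(i_1,\dots,i_{n-1})\,\mathscr F_e$; substituting Equation \eqref{eq:9} converts this product into exactly $\tilde{\mathscr U}_{(0,i_1,\dots,i_{n-1},N-1-i_{n-1},\dots,N-1-i_1)}$. For any tuple outside $E^{N,symm}$, the scalar $y$ is either $1$ or a unit-free polynomial absorbed into the freedom of choosing a lift; in either case $\Theta' \circ g(v_e) = \mathscr F_e$ as required.

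Finally I would check equivariance under $B_n \cup \mathbb I_{n-1}$. Since this braid acts trivially on the last $n-1$ tensor factors and $g$ is a diagonal operator whose eigenvalues depend only on the coordinates $e_{n+1},\dots,e_{2n-1}$ of those last factors, $g$ commutes with the representation of $B_n \cup \mathbb I_{n-1}$; combined with the $B_{2n-1}$-equivariance of $\Theta$ given by Theorem \ref{T:1}, this delivers equivariance of $\Theta' \circ g$. The only real obstacle is the first step: confirming that every coefficient produced by the twisting function lies in the unlocalised ring $\Z[s^{\pm 1},q^{\pm 1}]$, so that passing from the $\tilde{\Li}_N$-isomorphism of Lemma \ref{L:1} to an honest $\Z[s^{\pm 1},q^{\pm 1}]$-linear correspondence is legitimate; once this is granted, the rest of the proof reduces to bookkeeping with Equation \eqref{eq:9} and the triviality of the action on the last $n-1$ strands.
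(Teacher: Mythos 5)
Your argument is correct in substance but reaches the conclusion by a different mechanism than the paper. The paper's proof does not examine the integrality of $g$ at all: it records the identification over the localised ring $\tilde{\Li}_N$, observes that the matrix coefficients of the braid action $\hat{\varphi}_{2n-1,(n-1)(N-1)}(\beta_n\cup\mathbb I_{n-1})$ on the quantum side and of the homological action in the corresponding basis both lie in $\Z[s^{\pm1},q^{\pm1}]$, and then invokes the injectivity of the inclusion $\Z[s^{\pm1},q^{\pm1}]\hookrightarrow\tilde{\Li}_N$ to conclude that an equality of coefficients valid over the localisation is already an equality over the polynomial ring. You instead note that the twisting $g$ (as opposed to $g^{-1}$) multiplies each basis vector by an element of $\Z[s^{\pm1},q^{\pm1}]$ --- a sign, monomials in $s,q$, and quantum factorials --- so that $\Theta'\circ g$ is an honestly $\Z[s^{\pm1},q^{\pm1}]$-linear equivariant map from the start, and equivariance plus the basis computation give the statement. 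Both routes are legitimate: yours makes explicit why the correspondence of distinguished vectors is integrally defined, while the paper's descent argument more directly targets what is used downstream, namely that the coefficients $\alpha(j_0,\dots,j_{n-1})$ agree and are polynomial. (In either reading, $\Theta'\circ g$ is injective but not surjective over the unlocalised ring, so the displayed ``$\simeq$'' must be understood as a correspondence of bases intertwining the two actions rather than a module isomorphism.)

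One step of yours is not right as written: for a tuple $(0,i_1,\dots,i_{2n-2})\notin E^{N,symm}$ whose last $n-1$ entries all lie in $\{0,\dots,N-1\}$, Definition \ref{D:6} gives $y=c(N-1-e_{2n-1},\dots,N-1-e_{n+1})\neq 1$, and this factor contains quantum factorials, which are not deck transformations and therefore cannot be ``absorbed into the freedom of choosing a lift''; so $\Theta\circ g(v_e)=y\,\F_e$ is not literally $\F_e$ in that case. This imprecision is shared by the statement of the corollary itself (it is presumably the role of the unexplained passage from $\Theta$ to $\Theta'$), and it is harmless for the sequel because only the classes indexed by $E^{N,symm}$ are evaluated non-trivially; but the non-symmetric case does not follow from lift-independence as you claim.
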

\begin{proof}
Following the identification from equation \eqref{eq:10''}, we have the correspondence over $\tilde{\Li}_N$:
\begin{equation}
\begin{aligned}
(B_n \cup \mathbb I_{n-1}) \curvearrowright \hspace{15mm} \curvearrowleft (B_n \cup \mathbb I_{n-1}) \hspace{29mm}&\\
  \ \ \ \ \ \ \ \ \ \ \ \hat{V}_{2n-1,(n-1)(N-1)}|_{\iota_N}  \ \simeq \ H_{2n-1,(n-1)(N-1)}|{_{\tilde{\gamma}_N}} \ \ \ \ \ \ \ \ \ \ \ \ \ \ \ \ \ \ \ &\\
 {\color{blue} \Theta' \circ g} \left(v_0 \otimes v_{i_1}\otimes ... \otimes v_{i_{2n-2}} \right) =\hspace{45mm}& \\
& \hspace{-96 mm }=\begin{cases}
\tilde{\U}_{(0,i_1,...,i_{n-1}, N-1-i_{n-1},...,N-1-i_{1})}, \text{ if } (0,i_1,...,i_{2n-1}) \in E^{N,symm}\\
\F_{(0,i_1,...,i_{2n-2})}, \ \ \ \ \ \ \ \ \ \ \ \ \text{otherwise}.
\end{cases}
\end{aligned}
\end{equation}
However, we know that the braid group action on the quantum side $$\hat{\varphi}_{2n-1,(n-1)(N-1)}(v_0 \otimes v_{i_1}\otimes ... \otimes v_{i_{2n-2}})$$ has all the coefficients in $\Z[s^{\pm 1}, q^{\pm 1}]$. The same holds for the homological side as well. This shows that the two representations, which we know that are equal (in the corresponding bases) over $\tilde{\Li}_N$, actually have all the coefficients belonging to $\Z[s^{\pm1},q^{\pm1}]$. But the inclusion:
$$\Z[s^{\pm1},q^{\pm1}] \subseteq {\tilde{\Li}_N=\mathbb Z[s^{\pm1},q^{\pm1}]\left(I_N \right)^{-1}}$$ is injective. Thus, overall, the identification is true over $\Z[s^{\pm 1}, q^{\pm 1}]$.
\end{proof}
\subsection{Step IV-Homological correspondent of the coevaluation}
\label{SCJ:4}
\

In this step we combine these two ideas and use the twisted evaluations together with the new correspondence in order to reach the geometric part that is convenient for us, given by the code sequences. More precisely, using the formula from equation \eqref{eq:8} and the identification from Corollary \ref{C:2} we have:
\begin{equation}\label{eq:14}
\begin{aligned}
J_N(L,q) &  =q^{-(N-1)w(\beta_n)}\pi \circ {\color{green} \left(Id\otimes \tilde{\ev}^{\otimes {n-1}}_{f}{|_{\eta_{q,N-1}} }\right)}\circ \\
& \circ {\color{blue} ( \Theta' \circ g)^{-1}} \circ l_{2n-1,(n-1)(N-1)}|_{\psi_{q,N-1}} (\beta_{n} \cup {\mathbb I}_{n-1} ) \circ  {\color{blue} ( \Theta' \circ g )}\\
& \left( \sum_{i_1,...,i_{n-1}=0}^{N-1}v_0 \otimes v_{i_1}\otimes ... \otimes v_{i_{n-1}} \otimes v_{N-1-i_{n-1}}\otimes ... \otimes v_{N-1-i_1} \right)=\\
=^{Eq \ \eqref{eq:10}}&q^{-(N-1)w(\beta_n)}\pi \circ {\color{green} \left(Id\otimes \tilde{\ev}^{\otimes {n-1}}_{f}{|_{\eta_{q,N-1}} }\right)} \circ {\color{blue}( \Theta' \circ g)^{-1}} \circ \\
& \circ  l_{2n-1,(n-1)(N-1)}|_{\psi_{q,N-1}} (\beta_{n} \cup {\mathbb I}_{n-1} ) \\
&  \ \ \ \ \ \ \ \ \ \ \ \ \ \ \ \left( \sum_{i_1,...,i_{n-1}=0}^{N-1} \tilde{\U}_{(0,i_1,...,i_{n-1}, N-1-i_{n-1},...,N-1-i_{1})} \right).
\end{aligned}
\end{equation}
Using the notation from definition \ref{D:7}, we conclude the following formula, where the coevaluation is transported to the homological side:
\begin{equation}\label{eq:11}
\begin{aligned}
J_N(L,q)=&q^{-(N-1)w(\beta_n)}\pi \circ {\color{green} \left(Id\otimes \tilde{\ev}^{\otimes {n-1}}_{f} {|_{\eta_{q,N-1}} }\right)}  \circ (\Theta' \circ g)^{-1} \\
&  \circ l_{2n-1,(n-1)(N-1)} |_{\psi_{q,N-1}}(\beta_{n} \cup {\mathbb I}_{n-1} ){ \color{red} \mathscr P_n^{N}}.
\end{aligned}
\end{equation}
(for now, we look at the class $\mathscr P_n^{N}$ in the homology $H^{\text{lf},-}_{(n-1)(N-1)}(\tilde{C}_{2n-1,(n-1)(N-1)}, \Z)$).
\subsection{Step V-Passing to our version of Lawrence representation}\label{SCJ:5}

\

In this part, we will focus on the upper part of the diagram, aiming to understand the homological correspondent of the caps. Using the formula that gives the evaluation, presented in remark \ref{R:9}, we have the following:
\begin{equation}\label{eq:15}
\begin{aligned}
& \hspace{0mm}\pi \circ \left( Id \otimes \tilde{\ev}^{\otimes {n-1}}_{f} {|_{\eta_{q,N-1}} }\right): \hat{V}^{q,N-1}_{2n-1,(n-1)(N-1)} \rightarrow \Z[q^{\pm 1}]\\
& \pi \circ \left( Id \otimes \tilde{\ev}^{\otimes {n-1}}_{f} {|_{\eta_{q,N-1}} }\right)(w)= \\
& =\begin{cases}
\eta_{q,N-1} \circ p_n^N(i_1,...,i_{n-1}), \ \ if  \ \ \ 0 \leq i_1,...,i_{n-1}\leq N-1 \ \ and \\
 \ \ \ \ \ \ \ \ \ \ \ \ \ \ \ w=  v_0 \otimes v_{i_1}\otimes ... \otimes v_{i_{n-1}} \otimes v_{N-1-i_{n-1}}\otimes ... \otimes v_{N-1-i_1}\\
0, \ \ \ \ \ \ otherwise.
\end{cases}
\end{aligned}
\end{equation}


Going back to the geometrical picture, we remind correspondence  $\Theta' \circ g $ from Corollary \ref{C:2}, for partitions $e=(e_0,e_1,...,e_{2n-2})\in E_{2n-1,(n-1)(N-1)}$: 
\begin{equation}\label{R:7}
\begin{aligned} 
w=v_{e_0}\otimes v_{e_1} \otimes ... \otimes v_{e_{2n-2}} \leftrightarrow  \tilde{\mathscr U}_{e_0,e_1,...,e_{2n-2}} \text{ if } (e_0,e_1,...,e_{2n-2})\in E^{N,symm}\\
\F_{e_0,e_1,...,e_{2n-2}} \text{ if } (e_0,e_1,...,e_{2n-2})\notin E^{N,symm}
\end{aligned}
\end{equation}
On the quantum side, let us look at one of the terms from the formula given in equation \eqref{eq:8}:
\begin{equation} \label{eq:25}
\begin{aligned}
\hspace{1mm}\hat{\varphi}^{q,N-1}_{2n-1,(n-1)(N-1)} (\beta_{n} \cup {\mathbb I}_{n-1} )&\\
&\hspace{-32mm} \left( v_0 \otimes v_{i_1}\otimes ... \otimes v_{i_{n-1}} \otimes v_{N-1-i_{n-1}}\otimes ... \otimes v_{N-1-i_1}\right)=\\
&\hspace{-49mm}=\sum_{j_0,...,j_{n-1}=0}^{N-1} \alpha(j_0,...,j_{n-1}) v_{j_0} \otimes v_{j_1}\otimes ... \otimes v_{j_{n-1}} \otimes v_{N-1-i_{n-1}}\otimes ... \otimes v_{N-1-i_1},
\end{aligned}
\end{equation}
for some coefficients $\alpha(j_0,...,j_{n-1}) \in \Z[q^{\pm 1}, s^{\pm 1}]$.
\begin{remark}\label{R:6} 
However, the only vector that gets evaluated non-trivially by the above evaluation is the monomial which corresponds to a symmetric partition:
$$v_{0} \otimes v_{i_1}\otimes ... \otimes v_{i_{n-1}} \otimes v_{N-1-i_{n-1}}\otimes ... \otimes v_{N-1-i_1}.$$
\end{remark}
On the other hand, the element from \eqref{eq:25} corresponds through the identification $\Theta' \circ g$ to the following sum:
\begin{equation}
\begin{aligned}
&\hspace{-3mm}l_{2n-1,(n-1)(N-1)}(\beta_{n} \cup {\mathbb I}_{n-1} )(\tilde{\mathscr U}_{0,i_1...,i_{n-1},N-1-i_{n-1},...,N-1-i_1})=\\
&\hspace{5mm}=\alpha(0,i_1,...,i_{n-1}) \tilde{\mathscr U}_{0,i_1...,i_{n-1},N-1-i_{n-1},...,N-1-i_1}+\\
&\hspace{5mm}+\sum_{\substack {j_0,...,j_{n-1}=0\\ (j_0,...,j_{n-1})\neq (0,i_1,...,i_{n-1})}}^{N-1}  \alpha(j_0,...,j_{n-1}) \cdot 
\F_{j_0,...,j_{n-1},N-1-i_{n-1},...,N-1-i_1}.
\end{aligned}
\end{equation}
Now, we would like to pass from the Borel-Moore homology $ H^{\text{lf},-}_{m}(\tilde{C}_{n,m}, \Z)$ to the homology $H_{2n-1,(n-1)(N-1)}$ defined in definition \ref{T1'}. We use the following result.

\begin{proposition}(\cite{CrM})\label{P:9}
The map induced by the inclusion at the homological level:
$$I: H_{2n-1,(n-1)(N-1)}\rightarrow H^{\text{lf},-}_{(n-1)(N-1)}(\tilde{C}_{2n-1,(n-1)(N-1)}, \Z)$$ is injective. Moreover braid group action $L_{n,m}$ on $H_{n,m}$ in the basis of code sequences is isomorphic to the action coming from $l_{n,m}$ on the free subspace generated by code sequences in $H^{\text{lf},-}_{m}(\tilde{C}_{n,m}, \Z)$. 
\end{proposition}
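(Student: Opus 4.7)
The plan is to verify both claims by exploiting the explicit bases (code sequences and multiarcs) on both sides. First, I would describe the map $I$ concretely: a chain representing a class in $H_{n,m}$, which is relative to both the puncture-collision locus and the boundary piece $\partial^-$ over $w$, can be viewed as a chain in the complex defining $H^{\text{lf},-}_m$, where only the $\pi^{-1}(w)$ part is collapsed and the remaining non-compactness is treated \`a la Borel--Moore through the image of $\iota$ from notation \ref{R:1}. This inclusion of pairs descends to homology and yields $I$; crucially it sends the class $\U_e \in H_{n,m}$, represented by a specific lift of the submanifold $U_e$, to the class of the same lift in $H^{\text{lf},-}_m$.

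For injectivity, I would use Martel's basis theorem: the normalised multiarcs $\{\F_e\}_{e\in E_{n,m}}$ form a free basis of $H^{\text{lf},-}_m$ over $\Z[x^{\pm 1},d^{\pm 1}]$, and by Proposition \ref{E:1} the code sequences $\{\U_e\}$ are obtained from them by multiplication by the non-zero factors $\prod_{i=1}^n (e_i)_d!$. Hence the code sequences are linearly independent in $H^{\text{lf},-}_m$. Since the same family $\{\U_e\}$ generates $H_{n,m}$ by construction, any non-trivial $\Z[x^{\pm 1},d^{\pm 1}]$-combination killed by $I$ would yield a non-trivial dependence in $H^{\text{lf},-}_m$, which is impossible. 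This establishes injectivity and identifies $H_{n,m}$ with the submodule of $H^{\text{lf},-}_m$ freely spanned by the code sequences.

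For the equivariance assertion, I would invoke the naturality of the braid action. A braid $\beta \in B_n$ acts on $\mathscr D_n$ by a diffeomorphism supported away from $\partial \mathscr D_n$, hence fixing a neighbourhood of $w$; using the covering construction of definition \ref{D:12}, this lifts to a diffeomorphism of $\tilde C_{n,m}$ that preserves both the puncture-collision stratum and the fiber $\pi^{-1}(w)$. It therefore induces well-defined actions on both homologies, and the inclusion of pairs commutes with these actions already at the chain level. Combined with injectivity, this is precisely the statement that the action $L_{n,m}$ on $H_{n,m}$ written in the code-sequence basis coincides, under $I$, with $l_{n,m}$ restricted to the span of the code sequences in $H^{\text{lf},-}_m$.

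The main technical obstacle is being careful about which lifts to the cover are used to produce the classes $\U_e$ in each homology, so that $I$ really sends the basis element in $H_{n,m}$ to the corresponding element in $H^{\text{lf},-}_m$. This is a compatibility check on the basepoint and path conventions across the two constructions rather than a conceptual difficulty, but it has to be done explicitly in order to apply Martel's basis theorem in the form stated.
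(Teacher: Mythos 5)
The paper offers no internal proof of Proposition \ref{P:9}: it is a citation to \cite{CrM}, so there is no argument here to compare your proposal against directly. Judged on its own, your outline has the right shape but leans on two facts that it does not establish and that are precisely where the substance of the result lies.

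First, you assert that the code sequences $\{\U_e\}$ ``generate $H_{n,m}$ by construction.'' That is not how $H_{n,m}$ is built: it is defined (notation \ref{T2}) as a relative Borel--Moore-type homology group $H^{\text{lf},\infty,-}_m(\tilde C_{n,m};\Z)$ with a prescribed stratified boundary, not as the span of any explicit family. Showing that the multiarcs or code sequences span (let alone freely span) this group is a computation — the analogue of Martel's basis theorem for the modified boundary conditions — and it is not automatic. Without it, the injectivity argument ``non-trivial dependence in $H^{\text{lf},-}_m$ is impossible'' only establishes that $I$ is injective on the submodule the $\U_e$ do span, not on all of $H_{n,m}$. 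Second, you flag the lift/basepoint compatibility (that $I$ sends the class called $\U_e$ in one theory to the class called $\U_e$ in the other) as the ``main technical obstacle'' but characterize it as a bookkeeping check. For homology of covering spaces taken relative to different pieces of the boundary, the very existence of a natural comparison map and its effect on chosen cycle representatives can be subtle: the two theories impose different admissibility conditions on chains near the collision locus and at infinity, and one must verify that a representative of $\U_e$ in $H_{n,m}$ is admissible as a locally finite relative cycle in $H^{\text{lf},-}_m$ and maps to the class built there from the same lift of $U_e$. That is not a convention check; it is the part of \cite{CrM} that makes the proposition true. The equivariance paragraph, by contrast, is sound: the braid action comes from homeomorphisms of $\mathscr D_n$ fixing a neighbourhood of $w$ and of the collision strata, so it acts on both theories and commutes with $I$ at the chain level once $I$ has been correctly defined.
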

Combining the correspondence from \eqref{R:7} with remark \ref{R:6}, we obtain that the only element which should be evaluated on the homological side is $$\tilde{\mathscr U}_{0,i_1...,i_{n-1},N-1-i_{n-1},...,N-1-i_1}.$$ This means that from the braid group action $$l_{2n-1,(n-1)(N-1)}(\beta_{n} \cup {\mathbb I}_{n-1} )(\tilde{\mathscr U}_{0,i_1...,i_{n-1},N-1-i_{n-1},...,N-1-i_1})$$ we need the coefficient $\alpha(0,i_1,...,i_{n-1})$. Now, based on proposition \ref{P:9}, we see that this coefficient is the same as the coefficient of the corresponding class $\tilde{\mathscr U}_{0,i_1...,i_{n-1},N-1-i_{n-1},...,N-1-i_1}$ in the expression written using the braid action $L_{2n-1,(n-1)(N-1)}(\beta_{n} \cup {\mathbb I}_{n-1})$.

We conclude that we can use the homological action $L_{2n-1,(n-1)(N-1)}$ instead of $l_{2n-1,(n-1)(N-1)}$. This leads to the formula:
\begin{equation}\label{eq:11''}
\begin{aligned}
J_N(L,q)=&q^{-(N-1)w(\beta_n)}\pi \circ {\color{green} \left(Id\otimes \tilde{\ev}^{\otimes {n-1}}_{f} {|_{\eta_{q,N-1}} }\right)}  \circ (\Theta' \circ g)^{-1} \\
&  \circ L_{2n-1,(n-1)(N-1)} |_{\psi_{q,N-1}}(\beta_{n} \cup {\mathbb I}_{n-1} ){ \color{red} \mathscr P_n^{N}}.
\end{aligned}
\end{equation}
(now, the class $\mathscr P_n^{N}$ is seen in the homology $H_{2n-1,(n-1)(N-1)}$).

\subsection{Step VI- Homology classes given by figure eights are a natural choice, predicted from the quantum side}\label{SCJ:6}

\

In the following, we will show that the homology classes given by figure eights are a natural choice, predicted from the quantum side. More precisely, we want to find a geometric correspondent of the algebraic evaluation. 

From the previous remarks, we conclude that we need a submanifold which intersects $\tilde{ \mathscr U }_{e}$ non-empty if and only if $e$ is a partition which satisfies the following condition:
\begin{equation} 
\begin{aligned}
(*)\begin{cases}
\ \forall i \in \{1,...,n-1\} \ e_i=e_{2n-1-i}, \ e_0=0, \  \text { and }\\
 0\leq e_i \leq N-1,  \forall i\in \{1,...,n-1\}.
\end{cases}
\end{aligned}
\end{equation}
Actually, these are exactly the partitions that occur in the homological correspondent of the evaluation, given by $\mathscr P_{n}^N$. This specific requirement will motivate and explain why the {\em submanifolds given by figure eights are a natural choice.} 
 
This observation suggests that the dual manifold should be given by a geometric support which is {\em symmetric up to the reflection of the punctured disc with respect to its middle axis} (where we forget the first puncture), in the sense that the corresponding symmetric components add up to $N-1$. Now if we draw a figure eight between each such symmetric punctures $(k,2n-1-k)$ and take configuration spaces of $N-1$ points on such figures (as in diagram \ref{fig9}), we see that this has exactly the property that we need. 
 
 We conclude that these figure eights in the punctured disc lead to the good building blocks for our construction and taking their product, we obtain the homology class $\mathscr G_{n}^N$ as discussed in definition \ref{shc}.

\subsection{Step VII-Proof of the intersection formula}\label{SCJ:6}
\begin{figure}[H]
\centering
$ \color{red} \mathscr F_n^{N} \ \ \ =  \ \  \ \ \ \ \  s^{(n-1)} q^{-2\sum_{k=1}^{n-1}i_k} \ \ \ \ \ \ \ \ \ \ \ \ \ \ \ \ \ \ \ \ \ \ \ \ \ \ \ {\color{green} \mathscr G_{n}^N} \ \ \ \ \ \ \ \ \ $\\
${ \color{red}  \nwarrow \textit{  deformation} } $\\
${\color{red} \Huge \mathscr P_n^N= \sum_{i_1,...,i_{n-1}=0}^{N-1} \ \  \ \ \tilde{\mathscr U}_{0,i_1,...,i_{n-1},N-1-i_{n-1},...,N-1-i_{1}}} \ \ \ \ \ \ \ \ \ \ \ \ \ \ \ \ \ \ \ \ \ \ \ \ \ 
$
\includegraphics[scale=0.5]{multinoodles2.pdf}
\caption{}
\label{fig9}
\end{figure} In the next part, in order to make the notation easier, we will replace the action $L_{2n-1,(n-1)(N-1)} |_{\psi_{q,N-1}}(\beta_{n} \cup {\mathbb I}_{n-1} )$ directly by $(\beta_{n} \cup {\mathbb I}_{n-1} )$.
Putting everything together, we compute the model as follows: 
 \begin{equation}\label{eq:16}
\begin{aligned}
J_N(L,q)& =^{Eq \ \eqref{eq:11''}}q^{-(N-1)w(\beta_n)} \ \pi \circ {\color{green} \left(Id\otimes \tilde{\ev}^{\otimes {n-1}}_{f}{|_{\eta_{q,N-1}} }\right)} {\color{blue} (\Theta' \circ g)^{-1}} \\
&  \left( (\beta_{n} \cup {\mathbb I}_{n-1} ){ \color{red} \sum_{i_1,...,i_{n-1}=0}^{N-1} \tilde{\mathscr U}_{0,i_1,...,i_{n-1},N-1-i_{n-1},...,N-1-i_{1}}} \right).
\end{aligned}
\end{equation}
Now, we look at each term which occurs in this formula:
$$(\beta_{n} \cup {\mathbb I}_{n-1} ) \ { \color{red}  \tilde{\mathscr U}_{0,i_1,...,i_{n-1},N-1-i_{n-1},...,N-1-i_{1}}}.$$
We remark that this expression will be a linear combination of classes of the form:
\begin{equation}\label{eq:20}
\tilde{\mathscr U}_{{\color{teal}j_0,j_1,...,j_{n-1}},{\color {red}N-1-i_{n-1},...,N-1-i_{1}}}|_{\psi_{q,N-1}}, \ \text { for } {\color{teal} j_0,...,j_{n-1} \in \{0,...,N-1\}}.
\end{equation}

More precisely, we notice that the last $n-1$ indices of the classes that occur after the action of $\beta_n \cup \mathbb I_{n-1}$ are not changed. On the other hand, we know from from Corollary \ref{T:1'''} that this homological action (specialised through $\psi_{q,N-1}$) corresponds to the action onto weight spaces from the Verma module (specialised through $\eta_{q,N-1}$). However, we start with all indices less than $N-1$, corresponding to the ``small'' weight spaces, which we know that are preserved by the specialised braid group action. Going back on the homological side we conclude that we arrive with all indices less than $N-1$ as in equation \eqref{eq:20}.

Now, we want to apply the evaluation. From equation \eqref{eq:15}, we see that the only way in which a class is evaluated non-trivially by the identity union with the caps is if its indices are symmetric and the first one is zero, meaning the following form: 
\begin{equation}\label{eq:17} 
\begin{aligned}
\tilde{\mathscr U}_{0,j_1,...,j_{n-1},N-1-i_{n-1},...,N-1-i_{1}} \ \  \text{  where } & j_1=i_1,...,j_{n-1}=i_{n-1}. 
\end{aligned}
\end{equation}
Furthermore, the corresponding coefficient is given by the evaluation $\eta_{q,N-1}$ of the polynomial $p_n^N(i_1,...,i_{n-1})$, as we have discussed in remark \ref{eq:22}. 
Now, we look at the dual manifold. We notice the following.
\begin{lemma}
For indices $j_0,j_1,...,j_{2n-2} \in \{0,...,N-1\}$ such that $(j_0,j_1,...,j_{2n-2}) \in E_{2n-1,(n-1)(N-1)}$ we have the intersection formula:
\begin{equation}
\begin{aligned}
<\tilde{\mathscr U}_{j_0,j_1,...,j_{2n-2}}, \mathscr G_{n}^N>=\begin{cases}
1, \ \text {if} \  (j_0,...,j_{2n-2}) \in E^{N,symm}\\
0, \text{otherwise}.
\end{cases}
\end{aligned}
\end{equation}
\end{lemma}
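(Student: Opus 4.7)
The plan is to reduce both cases to an analysis of the base-space intersection $\bar U_{j_0,\ldots,j_{2n-2}} \cap G_n^N$ in $C_{2n-1,(n-1)(N-1)}$, and then apply Proposition \ref{P:3} (i.e.\ formula \eqref{eq:1}) to lift to the covering. For the symmetric case $(j_0,\ldots,j_{2n-2})\in E^{N,\mathrm{symm}}$, the indices are of the form $(0,i_1,\ldots,i_{n-1},N-1-i_{n-1},\ldots,N-1-i_1)$, and the claim is precisely Lemma \ref{R:1}: one obtains a unique transverse intersection point $\mathbf{x}$ in the base space, the local sign is $+1$ because each red-segment tangent meets the figure-eight tangent positively, and the associated loop $l_{\mathbf{x}}$ is a concatenation of small closed curves that neither wind around any puncture nor link each other, so $\phi(l_{\mathbf{x}})=1$.

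For the non-symmetric case I would show that $\bar U_{j_0,\ldots,j_{2n-2}} \cap G_n^N = \emptyset$, which forces the pairing to vanish. The geometric key, implicit in Figure \ref{fig4} and already used in Lemma \ref{R:2}, is that the $k$-th figure-eight making up $G_n^N$ meets only the two red segments going to its encircled (symmetric) pair of punctures, transversally. A point of $\bar U \cap G_n^N$ is a configuration in which each of the $(n-1)(N-1)$ points lies simultaneously on a red segment and on a figure-eight. Thus the $N-1$ points constrained to the $k$-th figure-eight must be exactly the red points on the two segments it meets, forcing $j_{p_k}+j_{p_k'}=N-1$ for each symmetric pair $\{p_k,p_k'\}$. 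The remaining unpaired puncture meets no figure-eight, so its red multiplicity must vanish; in the indexing of $E^{N,\mathrm{symm}}$, this is $j_0=0$. Together these are precisely the defining conditions of $E^{N,\mathrm{symm}}$, so if $(j_0,\ldots,j_{2n-2})\notin E^{N,\mathrm{symm}}$ the base intersection is empty.

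Because $\tilde{\mathscr U}_{j_0,\ldots,j_{2n-2}}$ and $\mathscr G_n^N$ are represented by lifts of these base-space submanifolds, any two lifts $\tilde M,\tilde N$ satisfy $\pi(\tilde M \cap t\tilde N)\subseteq \bar U \cap G_n^N = \emptyset$ for every deck transformation $t\in \langle x\rangle\langle d\rangle$. By Definition \ref{D:55} the intersection pairing is then the empty sum, hence zero, which completes the non-symmetric case.

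The step I expect to be the main obstacle is the transversality/disjointness claim that the $k$-th figure-eight, as a subset of $C_{2n-1,(n-1)(N-1)}$, meets no red segment other than the two going to the punctures it encircles. This is visually obvious from Figure \ref{fig4} but needs to be pinned down to guarantee that the base intersection is genuinely empty outside $E^{N,\mathrm{symm}}$; once this is established, both cases of the lemma follow cleanly from the earlier Lemmas \ref{R:2} and \ref{R:1} together with Definition \ref{D:55}.
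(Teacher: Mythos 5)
Your proposal is correct and takes essentially the same route as the paper's proof: analyze the base-space intersection $\bar U_{j_0,\ldots,j_{2n-2}} \cap G_n^N$, observe $j_0=0$ since no figure-eight meets the segment to the first puncture, deduce $j_k+j_{2n-1-k}=N-1$ for each symmetric pair, and invoke Lemma \ref{R:1} for the symmetric case. The only cosmetic difference is that you identify the $N-1$ configuration points on the $k$-th figure-eight directly with the red points on its two matching segments to obtain equality, whereas the paper first derives the inequality $j_k+j_{2n-1-k}\geq N-1$ for each $k$ and then sums, using the global constraint $j_1+\cdots+j_{2n-2}=(n-1)(N-1)$ to force equality.
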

\begin{proof}
We start by investigating when we can have a non-zero intersection pairing. In order to have a non-trivial intersection between lifts in the covering, we have to have at least an intersection point in the base configuration space. This will be encoded by $(n-1)(N-1)$ points in the disc, at the intersection between red and green segments. Since $j_0+j_1+...+j_{2n-2}=(n-1)(N-1)$ and the segments with multiplcities $j_0$ do not intersect the manifold $G_n^{N}$, it means that $j_0=0$. Further on, in order to have an intersection in the configuration space, each figure eight around the points $(k,2n-1-k)$ should support exactly $N-1$ points at the intersection with the red segments with multiplicities $(j_k,j_{2n-1-k})$. This shows that:
\begin{equation}\label{eq:26}
j_k+j_{2n-1-k}\geq N-1, \forall k\in \{1,...,n-1\}.
\end{equation}
However, we have that $j_1+...+j_{2n-2}=(n-1)(N-1)$. These two conditions imply that we have equalities in equation \eqref{eq:26} and so:
$$ j_k+j_{2n-1-k}=N-1, \forall k\in \{1,...,n-1\}.$$
This shows that if the intersection that we are interested in is non-zero, then the partition $(j_0,...,j_{2n-2}) \in E^{N,symm}$. 
 For the last part, we remind the computation of the intersection pairing from remark \ref{R:1}:
$$<\tilde{\mathscr U}_{0,i_1,...,i_{n-1},N-1-i_{n-1},...,N-1-i_{1}},  \mathscr G_{n}^N>=1, \ \ \ 
  \forall \  i_1,...,i_{n-1} \in \{0,...,N-1\}$$
which concludes the proof.
\end{proof}
Going back to the braid group action from \eqref{eq:20}, we have the following property:
\begin{equation}
\begin{aligned}
<\tilde{\mathscr U}_{j_0,j_1,...,j_{n-1},N-1-i_{n-1},...,N-1-i_{1}},  \mathscr G_{n}^N>&=\\
&\hspace{-20 mm}=\begin{cases}
1, \ \text {if} \  (j_0,...,j_{n-1})=(0,i_1,...,i_{n-1})\\
0, \text{otherwise}.
\end{cases}
\end{aligned}
\end{equation}
Using this, we continue the computation as below.

 \begin{equation}
\begin{aligned}
J_N(L,q)=& ^{Eq \eqref{eq:16}} q^{-(N-1)w(\beta_n)} \sum_{i_1,...,i_{n-1}=0}^{N-1} < { \color{green}  p_n^{N}(i_1,..,i_{n-1})}\\
&(\beta_{n} \cup {\mathbb I}_{n-1} )  { \color{red}  \tilde{\mathscr U}_{0,i_1,...,i_{n-1},N-1-i_{n-1},...,N-1-i_{1}}|_{\gamma}}, {\color{green} \mathscr G_n^N|_{\gamma}}> |_{\eta_{q,N-1}}.
\end{aligned}
\end{equation}
Moving the coefficient on the other side of the braid group action and using proposition \ref{P:8}, we obtain the following formula :
\begin{equation}
\begin{aligned}
& J_N(L,q)  = q^{-(N-1)w(\beta_n)} \cdot \hspace{-3mm} \sum_{i_1,...,i_{n-1}=0}^{N-1} \hspace{-3mm} < \left(\beta_{n} \cup {\mathbb I}_{n-1} \right) { \color{green}  p_n^{N}(i_1,..,i_{n-1})} \\
& \hspace{40mm}{ \color{red}\tilde{\mathscr U}_{0,i_1,...,i_{n-1},N-1-i_{n-1},...,N-1-i_{1}}|_{\gamma}},{\color{green} \mathscr G_n^N|_{\gamma}}> |_{\eta_{q,N-1}}=\\
&\hspace{14mm} = q^{-(N-1)w(\beta_n)} \cdot  < \left(\beta_{n} \cup {\mathbb I}_{n-1} \right) \sum_{i_1,...,i_{n-1}=0}^{N-1} { \color{green}   p_n^{N}(i_1,..,i_{n-1})} \cdot \\
& \hspace{38mm} \cdot { \color{red}\tilde{\mathscr U}_{0,i_1,...,i_{n-1},N-1-i_{n-1},...,N-1-i_{1}}|_{\gamma}},{\color{green} \mathscr G_n^N|_{\gamma}}> |_{\eta_{q,N-1}}=\\
& \ \ \ \ \ \ \ \ \ \ \ =^{Prop \ref{P:8}} q^{-(N-1)w(\beta_n)} < (\beta_{n} \cup {\mathbb I}_{n-1} )  { \color{red}  \mathscr F_n^N}, {\color{green}\mathscr G_n^N|_{\gamma}}> |_{\eta_{q,N-1}}.
\end{aligned}
\end{equation}
Using the property of the homology classes introduced in definition \ref{E:3}:
$$\mathscr F_n^N=s^{(n-1)} \mathscr E_n^N|_{{\gamma}}$$ together with the relation between the specialisations from diagram \ref{fig:7} $$\psi_{q,N-1}=\eta_{q,N-1} \circ \gamma $$ we obtain the following formula: 
\begin{equation}
\ J_N(L,q) \ = \ q^{-(N-1)w(\beta_n)} \cdot \ q^{(N-1)(n-1)} \ \ <(\beta_{n} \cup {\mathbb I}_{n-1} ) \ { \color{red} \mathscr E_n^N}, {\color{green} \mathscr G_n^N}> |_{\psi_{q,N-1}}.
\end{equation}
 This concludes the topological model for the coloured Jones invariants.
\section{Topological intersection model for the coloured Alexander invariants}
\label{7}
In this section, we aim to prove the topological model for the coloured Alexander polynomials, which is described in Theorem \ref{THEOREM}.
Following equation \eqref{eq:A}, the coloured Alexander invariant can be expressed as:
\begin{equation}
 \begin{aligned}
\Phi_{N}(L,\lambda)& ={\xi_N}^{(N-1)\lambda w(\beta_n)} \ p \ \circ\\
& \circ \left( (Id\otimes \ev_{U^{\otimes {n-1}}_{\lambda}} ) \circ \varphi^{U_{\lambda}}_{2n-1} (\beta_{n} \cup \bar{\mathbb I}_{n-1} ) \circ (\Id \otimes {\tcoev}_{U^{\otimes {n-1}}_{\lambda}}) \right) (v_0).
\end{aligned}
\end{equation}

\subsection{Step I-The invariant through weight spaces}
We start with a discussion concerning the algebraic construction which is specific to the root of unity. For this case, as we have seen in section \ref{SS:1}, there is a subtletly which occurs from our choice of the version of the quantum group (given by divided powers of the generator $F$).

The problem occurs because in this version $\hat{V}_{\xi_N,\lambda}$ does not gain an $N$-dimensional submodule as in the case of the Verma module over the quantum group with the usual generators. However, we have defined  $U_{\lambda}$ to be the vector subspace generated by the first $N$-vectors inside the Verma module. We remind Lemma \ref{L:2}, which shows that even if  $U_{\lambda}$ is not a submodule over the quantum group, its tensor power is preserved inside the tensor power of the specialised Verma module, with respect to the specialised braid group action. 

Pursuing this line, an analog argument as the one presented in the first step of the proof for the coloured Jones polynomials (from subsection \ref{SCJ:1}) tells us that we can see the coloured Alexander polynomial through the weight space $$ V^{\xi_N,\lambda}_{2n-1,(n-1)(N-1)}.$$ 
More precisely, we have the following formula.
 \begin{equation}
 \begin{aligned}
\Phi_{N}(L,\lambda) & ={\xi_N}^{(N-1)\lambda w(\beta_n)} \ p \  \circ \left( Id\otimes \ev_{U^{\otimes {n-1}}_{\lambda}} \right) \circ \\ 
& \hspace{5mm}\circ \varphi^{\xi_N,\lambda}_{2n-1,(n-1)(N-1)} \left(\beta_{n} \cup \bar{\mathbb I}_{n-1} \right) \circ
\left(\Id \otimes {\tcoev}_{U_{\lambda}^{\otimes {n-1}}}\right) (v_0).
\end{aligned}
\end{equation}
\subsection{Step II-Using the normalised dualities} Now, we want to see the whole invariant coming from a construction over two variables. We have seen in remark \ref{R:5} that the evaluations and coevaluations at roots of unity can be obtained from the evaluations and coevaluations over two variables, introduced in equation \eqref{eq:0}. 

We will twist these dualities with the function $f$ (defined in equation \eqref{eq:7}), in order to use the action through a particular weight space inside $U^{\otimes 2n-1}_{\lambda}$. Then, proposition \ref{P:4} leads us to the following description.
\begin{equation}
\begin{aligned}
\Phi_{N}(L,\lambda)& = {\xi_N}^{(N-1)\lambda w(\beta_n)} \ p \circ \left( Id\otimes \ev_{\xi_N}^{\otimes {n-1}}|_{\eta_{\xi_N,\lambda}} \right) {\color{blue}\left( Id\otimes f^{-1}|_{\eta_{\xi_N,\lambda}}\right) } \circ \\
& \circ \varphi^{\xi_N,\lambda}_{2n-1,(n-1)(N-1)} \left(\beta_{n} \cup {\mathbb I}_{n-1} \right) \circ {\color{blue}\left( Id\otimes f|_{\eta_{\xi_N,\lambda}} \right) } \circ \\
&  \circ \left(\Id \otimes {\tcoev}_{N}^{\otimes {n-1}}|_{\eta_{\xi_N,\lambda}}\right) (v_0).
\end{aligned}
\end{equation}
Using the notations for the twisted evaluations from definition \ref{D:5}, we conclude the formula:
\begin{equation}
\begin{aligned}
\Phi_{N}(L,\lambda)&={\xi_N}^{(N-1)\lambda w(\beta_n)} \ p \circ \left(Id\otimes \ev^{\otimes {n-1}}_{f,\xi_N}{|_{\eta_{\xi_N,\lambda}} } \right) \\
& \circ \varphi^{\xi_N,\lambda}_{2n-1,(n-1)(N-1)} \left(\beta_{n} \cup {\mathbb I}_{n-1} \right) \circ \left( \Id \otimes {\tcoev}_f^{\otimes {n-1}}{|_{\eta_{\xi_N,\lambda}}} \right) (v_0).
\end{aligned}
\end{equation}
This equation together with the discussion from subsection \ref{SS:2} and definition \ref{D:11}, shows that we can see the construction of the coloured Alexander polynomial coming from a formula over two variables and then specialised using the function $\eta_{\xi_N,\lambda}$.
\subsection{Step III- The invariant through the bigger weight spaces}
Similar to the generic case, we aim to see the invariant through the weight spaces from the Verma module. We remind that the inclusion of the weight space corresponding to the finite dimensional part at root of unity into the corresponding weight spaces from the Verma module is preserved by the braid group action (following Lemma \ref{L:2} and remark \ref{R:8}):
\begin{center}
\begin{tikzpicture}
[x=1.2mm,y=1.4mm]

\node (b1)  [color=blue]             at (0,10)    {$V^{\xi_N,\lambda}_{2n-1,(n-1)(N-1)}$};
\node (t1) [color=black] at (30,10)   {$\hat{V}^{\xi_N,\lambda}_{2n-1,(n-1)(N-1)}$};
\node (b2) [color=blue] at (0,0)  {$\varphi^{\xi_N,\lambda}_{2n-1,(n-1)(N-1)}$};
\node (t2)  [color=black]             at (30,0)    {$\hat{\varphi}^{\xi_N,\lambda}_{2n-1,(n-1)(N-1)}$};
\node (d2) [color=black] at (15,10)   {$\hookrightarrow$};
\node (d2) [color=black] at (15,12)   {$\iota$};
\node (d2) [color=black] at (0,5)   {$\circlearrowleft$};
\node (d2) [color=black] at (30,5)   {$\circlearrowleft$};
\node (d2) [color=black] at (15,5)   {$\equiv$};
\end{tikzpicture}
\end{center}
Pursuing the same argument as the one for the coloured Jones invariant (from subsection \ref{SCJ:2}), we conclude that we can obtain the coloured Alexander invariant through the weight spaces from the Verma module, using the extended evaluation and coevaluation (presented in definition \ref{D:5'}). 
\begin{equation}\label{eq:24} 
\begin{aligned}
 \Phi_N(L,\lambda)
& ={\xi_N}^{(N-1)\lambda w(\beta_n)} p \ \circ \left(Id\otimes \tilde{\ev}^{\otimes {n-1}}_{f,\xi_N}{|_{\eta_{\xi_N,\lambda}} }\right) \\
& \circ \hat{\varphi}^{\xi_N,\lambda}_{2n-1,(n-1)(N-1)} \left( \beta_{n} \cup {\mathbb I}_{n-1} \right) \circ \left(\Id \otimes \tilde{{\tcoev}}_f^{\otimes {n-1}}|_{\eta_{\xi_N,\lambda}}\right) (v_0).
\end{aligned}
\end{equation}
\subsection{Step IV-Twisting the identification} 
Now, we aim to pass to the homological side. We will use the discussion from section \ref{SCJ:3}. We choose the function $g$ as in definition \ref{D:6} and using Corollary \ref{C:2}, we have following correspondence over $\Z[q^{\pm1},s^{\pm 1}]$:
\begin{equation}
\begin{aligned}
 v_0 \otimes v_{i_1}\otimes ... \otimes v_{i_{n-1}} \otimes v_{N-1-i_{n-1}}\otimes ... \otimes v_{N-1-i_1}\\ \longleftrightarrow^{\color{blue} \Theta' \circ g}  \ \ \ \ \ \ \ \ \ \ \ \ \tilde{\U}_{(0,i_1,...,i_{n-1}, N-1-i_{n-1},...,N-1-i_{1})} 
 \end{aligned}
\end{equation}
From this identification together with the formula from equation \eqref{eq:24}, we obtain the following description:
\begin{equation}\label{eq:14}
\begin{aligned}
& \Phi_N(L,\lambda)= ^{Cor \ref{C:2}} {\xi_N}^{(N-1)\lambda w(\beta_n)} \ p \circ {\color{green} \left(Id\otimes 
\tilde{\ev}^{\otimes {n-1}}_{f,\xi_N} {|_{\eta_{\xi_N,\lambda}} }\right)} \\
& \hspace{14mm} \circ {\color{blue} ( \Theta' \circ g)^{-1}} \circ L_{2n-1,(n-1)(N-1)}|_{{\psi}_{\xi_N,\lambda}} (\beta_{n} \cup {\mathbb I}_{n-1} ) \circ  {\color{blue} ( \Theta' \circ g )}\\
& \hspace{12mm}\left( \sum_{i_1,...,i_{n-1}=0}^{N-1}v_0 \otimes v_{i_1}\otimes ... \otimes v_{i_{n-1}} \otimes v_{N-1-i_{n-1}}\otimes ... \otimes v_{N-1-i_1} \right)=\\
& \hspace{12mm}= {\xi_N}^{(N-1)\lambda w(\beta_n)} \ p \circ {\color{green} \left(Id\otimes \tilde{\ev}^{\otimes {n-1}}_{f,\xi_N}{|_{\eta_{\xi_N,\lambda}} }\right)} \circ {\color{blue}( \Theta' \circ g)^{-1}} \circ \\
& \hspace{14mm} \circ  L_{2n-1,(n-1)(N-1)}|_{\psi_{\xi_N,\lambda}} (\beta_{n} \cup {\mathbb I}_{n-1} ) \\
&  \hspace{18mm}\left( \sum_{i_1,...,i_{n-1}=0}^{N-1} \tilde{\U}_{(0,i_1,...,i_{n-1}, N-1-i_{n-1},...,N-1-i_{1})} \right).
\end{aligned}
\end{equation}
\subsection{Step V-Homological correspondent of the coevaluation} Moving the coevaluation on the topological side, which corresponds to the submanifold $\mathscr P_n^N$ we obtain:
\begin{equation}
\begin{aligned}
\Phi_N(L,\lambda)
& ={\xi_N}^{(N-1)\lambda w(\beta_n)} p \ \circ {\color{green} \left(Id\otimes \tilde{\ev}^{\otimes {n-1}}_{f,\xi_N} {|_{\eta_{\xi_N,\lambda}} }\right)} {\color{blue} ( \Theta' \circ g)^{-1}} \circ  \\
& \hspace{8mm} \circ L_{2n-1,(n-1)(N-1)} |_{\psi_{\xi_N,\lambda}}(\beta_{n} \cup {\mathbb I}_{n-1} ) \ { \color{red} \mathscr P_n^{N}}.
\end{aligned}
\end{equation}
\begin{figure}[H]
$ \color{red} \mathscr F_n^{\xi_N} \ \ \ =  \ \  \ \ \ \ \  s^{(1-N)(n-1)} \  q^{-2 \sum_{k=1}^{n-1} \ i_k} \ \ \ \ \ \ \ \ \ \ \ \ \ \ \ \ \ \ \ \ \ \ \ \ \ \ \  \ \ \ \ {\color{green} \mathscr G_{n}^N}  $\\
$ { \ \ \ \ \ \ \ \ \ \ \ \ \ \ \ \ \ \ \ \ \ \ \ \  \ \ }  { \color{red}  \nwarrow \textit{  deformation} } $\\
${\color{red} \Huge \mathscr P_n^N= \sum_{i_1,...,i_{n-1}=0}^{N-1} \ \  \ \ \tilde{\mathscr U}_{0,i_1,...,i_{n-1},N-1-i_{n-1},...,N-1-i_{1}}} \ \ \ \ \ \ \ \ \ \ \ \ \ \ \ \ \ \ \ \ \ \ \ \ \ 
$

\

\centering
\includegraphics[scale=0.5]{multinoodles2.pdf}
\caption{}
\end{figure}

\subsection{Step VI-Intersection formula at roots of unity}

In this part, we will transport the evaluation to the toplogical side. 
Looking at the evaluation at roots of unity and using remark \ref{R:9} we have:

\begin{equation}
\begin{aligned}
& p\circ \left( Id \otimes \tilde{\ev}^{\otimes {n-1}}_{f, \xi_N} {|_{\eta_{\xi_N,\lambda}} }\right): \hat{V}^{\xi_N,\lambda}_{2n-1,(n-1)(N-1)} \rightarrow \C \\
& p \circ \left( Id \otimes \tilde{\ev}^{\otimes {n-1}}_{f, \xi_N} {|_{\eta_{\xi_N,\lambda}} }\right)(w)= \\
& =\begin{cases}
\eta_{\xi_N,\lambda} \circ p_n^{\xi_N}(i_1,...,i_{n-1}), \ \ if  \ \ \ 0 \leq i_1,...,i_{n-1}\leq N-1 \ \ and \\
 \ \ \ \ \ \ \ \ \ \ \ \ \ \ \ w=  v_0 \otimes v_{i_1}\otimes ... \otimes v_{i_{n-1}} \otimes v_{N-1-i_{n-1}}\otimes ... \otimes v_{N-1-i_1}\\
0, \ \ \ \ \ \ otherwise.
\end{cases}
\end{aligned}
\end{equation}
Now, we remind the relation presented in proposition \ref{P:8}:
\begin{equation}
\begin{aligned}
\mathscr F_n^{\xi_N}|_{\eta_{\xi_N,\lambda}} =\sum_{i_1,...,i_{n-1}=0}^{N-1} \eta_{\xi_N,\lambda} \circ ~& p_n^{\xi_N}(i_1,...,i_{n-1}) \cdot \\ & \cdot \tilde{\mathscr U}_{0,i_1,...,i_{n-1},N-1-i_{n-1},...,N-1-i_{1}}|_{\psi_{\xi_N,\lambda}}.
\end{aligned}
\end{equation}
\begin{remark}
This property uses extensively the properties of the specialisation $\eta_{\xi_N,\lambda}$, and in contrast to the coloured Jones case, this relation does not hold over two variables.
\end{remark}
Using a similar argument concerning the intersection pairing as the one from step \ref{SCJ:5}, we obtain the following:
\begin{equation}\label{E:5}
\begin{aligned}
& \Phi_N(L,\lambda)=  {\xi_N}^{(N-1)\lambda w(\beta_n)} \cdot \hspace{-3mm} \sum_{i_1,...,i_{n-1}=0}^{N-1} \hspace{-3mm} < \left(\beta_{n} \cup {\mathbb I}_{n-1} \right) { \color{green} p_n^{\xi_N}(i_1,..,i_{n-1})} \\
& \hspace{36mm}{ \color{red}\tilde{\mathscr U}_{0,i_1,...,i_{n-1},N-1-i_{n-1},...,N-1-i_{1}}|_{\gamma}},{\color{green} \mathscr G_n^N|_{\gamma}}> |_{\eta_{\xi_N,\lambda}}=\\
&\hspace{14mm} = {\xi_N}^{(N-1)\lambda w(\beta_n)} \cdot  < \left(\beta_{n} \cup {\mathbb I}_{n-1} \right) \sum_{i_1,...,i_{n-1}=0}^{N-1} { \color{green}   p_n^{\xi_N}(i_1,..,i_{n-1})} \cdot \\
& \hspace{35mm} \cdot { \color{red}\tilde{\mathscr U}_{0,i_1,...,i_{n-1},N-1-i_{n-1},...,N-1-i_{1}}|_{\gamma}},{\color{green} \mathscr G_n^N|_{\gamma}}> |_{\eta_{\xi_N,\lambda}}=\\
& \ \ \ \ \ \ \ \ \ \ \ =^{Prop \ref{P:8}} {\xi_N}^{(N-1)\lambda w(\beta_n)}  < (\beta_{n} \cup {\mathbb I}_{n-1} )  { \color{red}  \mathscr F_n^{\xi_N}}, {\color{green}\mathscr G_n^N|_{\gamma}}> |_{\eta_{\xi_N,\lambda}}.
\end{aligned}
\end{equation}
Further on, we look at the homology classes introduced in definition \ref{E:3}:
$$\mathscr F_n^{\xi_N}=s^{(1-N)(n-1)} \mathscr E_n^N|_{{\gamma}}.$$ 
This definition together with formula \eqref{E:5} and the property concerning the relation between specialisations from diagram \ref{fig:7} $$\psi_{\xi_N,\lambda}=\eta_{\xi_N,\lambda} \circ \gamma $$ lead to the following formula: 
\begin{equation}
\Phi_{N}(L,\lambda)={\xi_N}^{(N-1)\lambda w(\beta_n)} \cdot {\xi_N}^{\lambda (1-N)(n-1)} \ <(\beta_{n} \cup {\mathbb I}_{n-1} ) \ { \color{red} \mathscr E_n^{N}}, {\color{green} \mathscr G_n^N}> |_{\psi_{\xi_N,\lambda}}. 
\end{equation}
This concludes the intersection model for the family of the coloured Alexander invariants and also Theorem \ref{THEOREM}.
\clearpage
\section{Coloured Alexander invariants from $\Z\oplus \Z_N$-converings} \label{S:8}
This section concerns the sequence of coloured Alexander invariants. We aim to show that they come directly from a certain covering of the configuration space, without further specialisations. 

\

\noindent{\bf Construction of the covering space} \ We start with the local system from definition \ref{D:12} and equation \eqref{eq:23} that leads to the covering $\tilde{C}_{n,m}$: 
\begin{equation*}
\begin{aligned}
\phi: \pi_1(C_{n,m}) \rightarrow  \ & \Z \oplus \Z.\\
&\langle x \rangle \ \langle d\rangle
\end{aligned}
\end{equation*}
On the other hand, we have seen in Theorem \ref{THEOREM} that the topological model for the $N^{th}$ coloured Alexander polynomial corresponds to an intersection pairing specialised through $\psi_{\xi_N,\lambda}$. This motivates the following definition, where we consider a local system whose second component has finite order.
\begin{definition}

a) (New local system) Let $p_N:\Z \rightarrow \Z_N$ be the projection onto the finite group of order $N$. Let us consider the previous local system projected using this map, as below:
\begin{equation}
\begin{aligned}
&\phi_N: \pi_1(C_{n,m}) \rightarrow  \Z\oplus \Z \rightarrow \Z\oplus \Z_N\\
& \hspace{27mm}\langle x \rangle \ \langle d\rangle \ \ \  \langle s \rangle \langle d_N \rangle\\
&\phi_N=(2\cdot Id \oplus p_N) \circ  \  \phi. 
\end{aligned}
\end{equation}
b) (Covering) Let $C^{N}_{n,m}$ be the covering of $C_{n,m}$ which corresponds to the local system $\phi_N$.
Let us fix ${\bf d_N} \in C^{N}_{n,m}$ a lift of the base point $\bf d$.  
\end{definition}
Looking at the group ring corresponding to this new local system, we have the following:
$$\Z[\Z \oplus \Z_N]\simeq\Z[s^{\pm 1}, \xi_N^{\pm 2}]$$
where $\xi_N=e^{\frac{2\pi i}{2N}}$ as in the previous sections.
\begin{definition}(Homology of the covering space)

We consider the two types of homology of this covering space, defined in an analog manner as the ones from notation \ref{T2}.  More specifically, we define the following:
\begin{equation}
\begin{aligned}
H^N_{n,m}:=&H^{\text{lf},\infty,-}_m(C^N_{n,m}; \Z)\\ H^{\partial,N}_{n,m}:=&H^{lf, \Delta}_{m}(C^N_{n,m}, \partial^{-}; \Z).
\end{aligned}
\end{equation}
\end{definition}
Then, there is a corresponding intersection pairing: 
\begin{equation}\label{P:5}
< \ , \ >_N:H^N_{n,m} \otimes H^{\partial,N}_{n,m} \rightarrow \Z[s^{\pm 1}, \xi_N^{\pm 2}]
\end{equation}
whose method of computation is the one presented in equation \eqref{eq:1}, applied for the local system $\phi_N$.
\begin{definition}(Specialisations of truncated coefficients) Let us consider the following specialisations:
\begin{equation}
\begin{aligned}
& \begin{cases}
\gamma_N: \Z[x^{\pm 1}, d^{\pm 1}]\rightarrow \Z[s^{\pm1},{\xi_N}^{\pm2}]\\
\gamma_N(x)=s^2; \ \ \ \ \ \ 
\gamma_N(d)={\xi_N}^{-2}.
\end{cases}\\
& \begin{cases}
\eta^N_{\xi_N,\lambda}: \Z[s^{\pm1},{\xi_N}^{\pm2}]\rightarrow \C \\
\eta^N_{\xi_N,\lambda}(s)=\xi_N^{\lambda}.
\end{cases}\\
& \begin{cases}
\eta_N: \Z[s^{\pm 1}, q^{\pm 2}]\rightarrow \Z[s^{\pm1},{\xi_N}^{\pm2}]\\
\gamma_N(q)=\xi_N.
\end{cases}
\end{aligned}
\end{equation}
\end{definition}
\begin{figure}[H]
\begin{center}
\begin{tikzpicture}
[x=1.1mm,y=1.1mm]

\node (b1)  [color=blue] at (-30,30)    {${\mathbb Z[x^{\pm1},d^{\pm1}]}$};
\node (b2) [color=orange] at (0,30)   {${\mathbb Z[s^{\pm1},{\xi_N}^{\pm2}]}$};
\node (b3) [color=red]   at (0,50)   {$\mathbb Z[s^{\pm1},q^{\pm2}]$};
\node (b5) [color=green]  at (0,2)   {$\boldsymbol{\mathbb C}$};

\node (b6) [color=blue]  at (-22,27)   {$\boldsymbol {x}$};
\node (b6') [color=blue]  at (-22,24)   {$\boldsymbol {d}$};
\node (b7) [color=orange]  at (-7,27)   {$\boldsymbol {s^{2}}$};
\node (b7') [color=orange]  at (-7,24)   {$\boldsymbol {\xi_N^{-2}}$};
\node (b8') [color=orange]  at (15,20)   {$\boldsymbol {s}$};
\node (b9') [color=green]  at (15,5)   {$\boldsymbol {\xi_N^{\lambda}}$};
\node (b8'') [color=red]  at (15,48)   {$\boldsymbol {q}$};
\node (b9'') [color=green]  at (15,32)   {$\boldsymbol {\xi_N}$};

\draw[->,color=blue, dashed]   (b6)      to node[right,font=\small, yshift=3mm]{}                           (b7);
\draw[->,color=blue,dashed]   (b6')      to node[right,font=\small, yshift=3mm]{}                           (b7');
\draw[->,color=blue]   (b1)      to node[right,font=\small, yshift=3mm]{${{\gamma}_N}$}                           (b2);

\draw[->,color=red]             (b3)      to node[right,font=\small, yshift=3 mm]{$\eta_N$ }   (b2);
\draw[->, color=blue]             (b1)     to node[right,yshift=-3mm,xshift=-5mm,font=\small] [xshift=-2mm,yshift=7mm]{$ {\gamma}$}   (b3);
\draw[->,color=orange,thick]   (b2)      to node[right,font=\small]{${\eta^N_{\xi_N,\lambda}}$}                        (b5);
\draw[->,color=green]   (b1)      to node[left,font=\small]{${\psi_{\xi_N,\lambda}}$}                        (b5);
\draw[->,color=orange,dashed]   (b8')      to node[left,font=\small, xshift=50mm,yshift=3mm]{}                           (b9');
\draw[->,color=red,dashed]   (b8'')      to node[left,font=\small, xshift=50mm,yshift=3mm]{}                           (b9'');
\draw[->,color=red,dashed]   (20,50)      to[in=60,out=-60] node[left,font=\small, xshift=10mm,yshift=3mm]{$\eta_{\xi_N,\lambda}$}                           (20,2);

\end{tikzpicture}
\end{center}
\caption{Specialisations}
\label{fig:8}  
\end{figure}
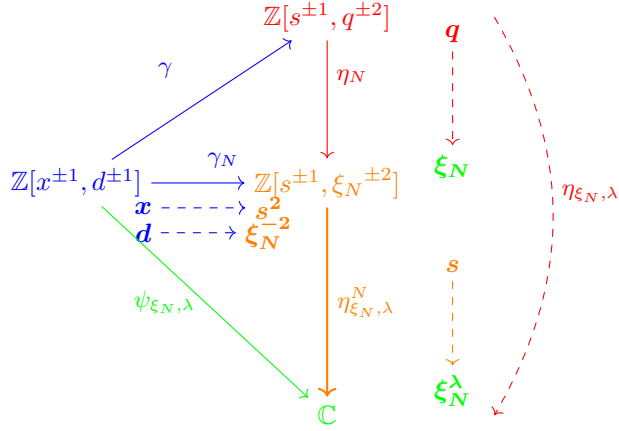

\begin{lemma}(Relations between intersection forms)\label{L:3}
Let $M_1,M_2\subseteq C_{n,m}$ two immersed submanifolds of dimension  $m$ whose lifts in $C_{n,m}^N$ satisfy the requirements from definition \ref{D:55}. Also, let us consider two paths $\gamma_{M_1},\gamma_{M_2}:[0,1]\rightarrow C_{n,m}$ such that:
\begin{equation}
\begin{cases}
\gamma_{M_1}(0)={\bf d}; \gamma_{M_1}(1) \in M_1\\ 
\gamma_{M_2}(0)={\bf d}; \gamma_{M_2}(1) \in M_2.
\end{cases}
\end{equation}

a) Let $\tilde{\gamma}_{M_1},\tilde{\gamma}_{M_2}:[0,1]\rightarrow \tilde{C}_{n,m}$ the corresponding lifts in $\tilde{C}_{n,m}$ through $\bf \tilde{d}$. Moreover, we denote by $$\tilde{M}_1,\tilde{M}_2\subseteq \tilde{C}_{n,m}$$
the lifts of $M_1,M_2$ through the points $\tilde{\gamma}_{M_1}(1)$ and $\tilde{\gamma}_{M_2}(1)$ respectively.

b) Similarly, let $\gamma^N_{M_1},\gamma^N_{M_2}:[0,1]\rightarrow C^N_{n,m}$ be the corresponding lifts through $\bf d_N$. Further on, we consider 
$$M^N_1,M^N_2\subseteq C^N_{n,m}$$
to be the lifts of $M_1,M_2$ through the points $\gamma^N_{M_1}(1)$ and $\gamma^N_{M_2}(1)$ respectively.
Then, we have the following property:
\begin{equation}
<[M^N_1],[M^N_2]>_N~=~<[\tilde{M}_1],[\tilde{M}_2]>_{\gamma_N}.
\end{equation}
\end{lemma}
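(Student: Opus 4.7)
The plan is to show that both intersection pairings are computed by the same combinatorial formula applied to the same loops in the base configuration space, and that the local systems $\phi$ and $\phi_N$ differ by exactly the ring morphism $\gamma_N$. The content of the lemma is thus functoriality of the intersection form with respect to the pushforward of local systems.

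First I would unravel both sides using Proposition \ref{P:3}. For the $\Z \oplus \Z$-covering, the lifts $\tilde{M}_1,\tilde{M}_2$ are prescribed by the endpoints $\tilde{\gamma}_{M_1}(1),\tilde{\gamma}_{M_2}(1)$, so formula \eqref{eq:1} gives
\begin{equation*}
<[\tilde{M}_1],[\tilde{M}_2]>~=~\sum_{x\in M_1\cap M_2}\alpha_x\cdot\phi(l_x)\in\Z[x^{\pm1},d^{\pm1}],
\end{equation*}
where the loops $l_x$ are built from the paths $\gamma_{M_1},\gamma_{M_2}$ together with auxiliary paths $\bar\gamma_{M_1}\subseteq M_1$, $\bar\gamma_{M_2}\subseteq M_2$ connecting $\gamma_{M_i}(1)$ to the intersection point $x$. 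The same construction applied in $C^N_{n,m}$ with the fixed base point $\mathbf{d_N}$ and paths $\gamma^N_{M_1},\gamma^N_{M_2}$ yields
\begin{equation*}
<[M^N_1],[M^N_2]>_N~=~\sum_{x\in M_1\cap M_2}\alpha_x\cdot\phi_N(l'_x)\in\Z[s^{\pm1},\xi_N^{\pm2}],
\end{equation*}
where $l'_x$ is the analogously built loop in $C_{n,m}$.

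The key observation is that $l_x$ and $l'_x$ are the \emph{same} loops in $\pi_1(C_{n,m},\mathbf{d})$. Indeed, the projection $C^N_{n,m}\to C_{n,m}$ factors through $\tilde{C}_{n,m}$, so the paths $\gamma_{M_i}$ and $\gamma^N_{M_i}$ have the same projection; moreover the geometric intersection points $x\in M_1\cap M_2$ in the base disc and the local signs $\alpha_x$ are independent of the chosen covering. Therefore $l_x=l'_x$ in the base, and the two expressions differ only through the two local systems applied to the same loops.

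Now I would compare $\phi$ and $\phi_N$ at the level of group rings. By the definition of $\phi_N=(2\cdot\mathrm{Id}\oplus p_N)\circ\phi$, if $\phi(l_x)=x^a d^b$ then $\phi_N(l_x)=s^{2a}\xi_N^{-2b}$, which is precisely $\gamma_N(x^a d^b)$. Linearity of $\gamma_N$ gives
\begin{equation*}
<[M^N_1],[M^N_2]>_N~=~\sum_{x\in M_1\cap M_2}\alpha_x\cdot\gamma_N(\phi(l_x))~=~\gamma_N\!\left(\sum_{x\in M_1\cap M_2}\alpha_x\cdot\phi(l_x)\right)=<[\tilde{M}_1],[\tilde{M}_2]>_{\gamma_N},
\end{equation*}
which is the claimed equality. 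The only mildly delicate point is checking that the auxiliary paths $\bar\gamma_{M_i}$ inside $M_i$ can be chosen compatibly for the two coverings so that one really obtains the same loops $l_x=l'_x$; this follows because such paths are only determined up to homotopy inside $M_i$, and any two choices change $l_x$ and $l'_x$ by the same element of $\pi_1(M_i)$, leaving both sides unchanged.
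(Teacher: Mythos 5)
Your proposal is correct and follows essentially the same route as the paper: both sides are expanded via the formula of Proposition \ref{P:3}, the loops $l_x$ in the base are observed to be identical for the two coverings, and the identity $\gamma_N\circ\phi=\phi_N$ at the level of group rings yields the equality. Your extra remark about the well-definedness of the auxiliary paths $\bar\gamma_{M_i}$ is a small additional check the paper leaves implicit, but it does not change the argument.
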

\begin{proof}
We remind that the intersection pairing in the covering space is encoded by the geometric intersections in the base configuration space together with the corresponding local system. Its precise formula is presented in equation \eqref{eq:1}. This leads to the following descriptions:
\begin{equation}
\begin{aligned}
 <[M_1^N],[M_2^N]>_N & =\sum_{x \in M_1\cap M_2} \alpha_x \cdot \phi_N(l_x) \in \Z[s^{\pm1}, {\xi_N}^{\pm2}].\\
 <[\tilde{M}_1],[\tilde{M}_2]>  \hspace{3mm} & =\sum_{x \in M_1 \cap M_2} \alpha_x \cdot \phi(l_x) \in \Z[x^{\pm1}, d^{\pm1}].
\end{aligned}
\end{equation}
This shows that the two expressions from the statement are given by the same intersections in the base configuration space, and the only difference occurs at the evaluation of the local system. Further on, we notice that $$\gamma_N \circ \phi=\phi_N,$$ if we see the image of $\phi$ and $\phi_N$ in the corresponding group rings, namely $\Z[x^{\pm1},d^{\pm1}] $ and $\Z[s^{\pm1}, {\xi_N}^{\pm2}]$ respectively. 
This remark leads to the equality from the statement and concludes the proof.
\end{proof}

\begin{definition}(Homology classes) Let us define the homology classes
$$\bar{\mathscr E}_n^{\xi_N} \in H^N_{2n-1,(n-1)(N-1)} \text{  and  } \ \  \bar{\mathscr G}_{n}^N \in H^{\partial,N}_{2n-1,(n-1)(N-1)}$$ given by the lifts of the same geometric submanifolds as the ones from definition \ref{fhc} and \ref{fhc}. We follow the same lifting procedure as the one presented in notation \ref{N:2} and definition \ref{D:10} for the paths from figure \ref{fig5}, this time using the point $\bf d_N$ and the covering $C^N_{2n-1,(n-1)(N-1)}$.
\end{definition}
In the following, we show that these homology classes lead to the coloured Alexander polynomials, as presented in Corollary \ref{C:Alex}.
\begin{proof}
Following the model from the previous sections, we have:
\begin{equation}
\begin{aligned}
\Phi_{N}(L,\lambda)=^{Th \ref{THEOREM}}{\xi_N}^{(N-1)\lambda w(\beta_n)} \cdot \ & {\xi_N}^{\lambda (1-N)(n-1)} \cdot \\
 &\cdot <(\beta_{n} \cup {\mathbb I}_{n-1} ) \ { \color{red} \mathscr E_n^{N}}, {\color{green} \mathscr G_n^N}> |_{\psi_{\xi_N,\lambda}}=\\
=^{Lemma \ref{L:3}}{\xi_N}^{(N-1)\lambda w(\beta_n)} \cdot \ & {\xi_N}^{\lambda (1-N)(n-1)} \cdot \\
& \cdot <(\beta_{n} \cup {\mathbb I}_{n-1} ) \ { \color{red} \bar{\mathscr E}_n^{\xi_N}}, {\color{green} \bar{\mathscr G}_n^N}> _{N}|_{s=\xi_N^\lambda}.
\end{aligned}
\end{equation}
The last equality concludes the proof of this model.
\end{proof}
\
In the literature, the coloured Alexander invariants are often seen as Laurent polynomials in the variable $\xi_N^{\lambda}$. This corresponds exactly to the variable $s$ from the local system $\phi_N$, which appears in the pairing from equation \eqref{P:5}. Following the same argument as above, we conclude a topological model using this variable.
\begin{corollary}(Coloured Alexander invariants via the pairing $< ,  >_N$)
\begin{equation}
\Phi_{N}(L,s)={s}^{(N-1) w(\beta_n)} \cdot {s}^{ (1-N)(n-1)} \cdot <(\beta_{n} \cup {\mathbb I}_{n-1} ) \ { \color{red} \bar{\mathscr E}_n^{\xi_N}}, {\color{green} \bar{\mathscr G}_n^N}> _{N}.
\end{equation}
\end{corollary}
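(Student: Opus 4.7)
The plan is to upgrade Theorem \ref{C:Alex} from a pointwise identity (for each $\lambda\in\C$) to an identity of Laurent polynomials in the formal variable $s$. The corollary follows from three observations combined.

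First, I would verify that the right-hand side of the proposed formula is a well-defined element of $\Z[s^{\pm 1},\xi_N^{\pm 2}]$, without any specialisation on $s$. This uses Proposition \ref{P:5}, which states that the pairing $\langle\cdot,\cdot\rangle_N$ takes values in $\Z[s^{\pm 1},\xi_N^{\pm 2}]$; the fact that the class $\bar{\mathscr E}_n^{\xi_N}$ is a $\Z[\xi_N^{\pm 2}]$-linear combination of geometric lifts (its coefficients $d^{\sum_k i_k}$ become nonnegative powers of $\xi_N^{-2}$, with no $s$-dependence); and the property that the braid group action on $H^N_{2n-1,(n-1)(N-1)}$ is defined over $\Z[s^{\pm 1},\xi_N^{\pm 2}]$. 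The framing prefactors are plain monomials in $s$.

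Next, I would apply the specialisation $\eta^N_{\xi_N,\lambda}\colon s\mapsto \xi_N^\lambda$ to this Laurent polynomial. By naturality of the pairing under the compatible specialisations of coefficients (a coefficient-wise application of Lemma \ref{L:3}), the outcome coincides with the right-hand side of Theorem \ref{C:Alex}, which identifies it with $\Phi_N(L,\lambda)$. Hence the Laurent polynomial
\[
s^{(N-1)w(\beta_n)}\cdot s^{(1-N)(n-1)}\cdot\langle(\beta_n\cup\mathbb I_{n-1})\,\bar{\mathscr E}_n^{\xi_N},\bar{\mathscr G}_n^N\rangle_N\ \in\ \Z[\xi_N^{\pm 2}][s^{\pm 1}]
\]
and the classical coloured Alexander invariant $\Phi_N(L,s)\in\Z[\xi_N^{\pm 2}][s^{\pm 1}]$ agree at every value of the form $s=\xi_N^\lambda$ with $\lambda\in\C$, that is, on all of $\C^*$.

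Finally, I would conclude that two Laurent polynomials in $s$ whose values coincide at infinitely many points of $\C^*$ must themselves be equal as elements of $\Z[\xi_N^{\pm 2}][s^{\pm 1}]$, which gives the desired identity. The main (small) obstacle I anticipate is the very first step, namely matching the class $\bar{\mathscr E}_n^{\xi_N}$ defined on the $\Z\oplus\Z_N$ covering with its $\Z\oplus\Z$ counterpart $\mathscr E_n^N$ under the intersection pairing; this is precisely the content of Lemma \ref{L:3}, applied term by term to the sum defining $\bar{\mathscr E}_n^{\xi_N}$ and then multiplied by the $s$-monomial framing prefactors. No additional topological input beyond Theorem \ref{C:Alex} and Lemma \ref{L:3} is needed.
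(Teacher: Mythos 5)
Your argument is correct and matches the paper's intent: Theorem~\ref{C:Alex} gives the identity after the specialisation $s\mapsto\xi_N^\lambda$ for every $\lambda\in\C$, both sides of the corollary lie in $\Z[\xi_N^{\pm2}][s^{\pm1}]$ by Proposition~\ref{P:5} and the classical polynomiality of the ADO invariant, and two Laurent polynomials over the integral domain $\Z[\xi_N^{\pm2}]\subset\C$ agreeing at all points of $\C^*$ coincide. One small redundancy: the invocation of Lemma~\ref{L:3} to ``match $\bar{\mathscr E}_n^{\xi_N}$ with $\mathscr E_n^N$'' is not needed for the corollary --- that comparison is already internal to the proof of Theorem~\ref{C:Alex}, which you are treating as a black box, so the specialised pairing in your step two is literally the right-hand side of that theorem with no further translation required.
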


\
\
\url{https://www.maths.ox.ac.uk/people/cristina.palmer-anghel}  


\begin{thebibliography}{99}
\bibitem {ADO} Y. Akustu, T. Deguchi, T. Ohtsuki -{ \em  Invariants of colored links}, J. Knot Theory Ramifications 1  161-184, (1992).

\bibitem{Cr1} C. Anghel-{\em A topological model for the coloured Alexander invariants}, math.GT arXiv:1906.04056, 41 pages, (2019).

\bibitem{Cr2} C. Anghel-{ \em A topological model for the coloured Jones polynomials}, math.GT arXiv:1712.04873v2, 50 pages, (2019).
\bibitem{CrM} C. Anghel, M. Palmer-{ \em Poincar\'e duality, pairings and bases for different flavours of the Lawrence representations of braid groups}

\bibitem{Big}  Stephen Bigelow - {\em A homological definition of the Jones polynomial.} 
In Invariants of knots and 3-manifolds (Kyoto, 2001), volume 4 of Geom. Topol. Monogr., pages 29-41. Geom. Topol. Publ., Coventry, (2002).
\bibitem{Big2} Stephen Bigelow-{Homological representations of the Iwahori-Hecke algebra}, Geometry and Topology Monographs, Volume 7: Proceedings of the Casson Fest, Pages 493-507, (2004).

\bibitem{GP} N. Geer, B. Patureau, V. Turaev- {\em Modified quantum dimensions and re-normalized link invariants}, Compositio Mathematica, volume 145 (2009), issue 01, pp. 196-212.
\bibitem{GM} S. Gukov, C. Manolescu- {\em A two-variable series for knot complements} arXiv preprint arXiv:1904.06057, (2019).
\bibitem{Ito}  Tetsuya Ito - {\em Reading the dual Garside length of braids from homological and quantum representations.} 
Comm. Math. Phys., 335(1):345-367, (2015).
\bibitem{Ito2} Tetsuya Ito -{\em A homological representation formula of colored Alexander invariants} Adv. Math. 289, 142-160, (2016).  
\bibitem{Ito3} Tetsuya Ito-{\em Topological formula of the loop expansion of the colored Jones polynomials}, Trans. Amer. Math. Soc., (2019) 
\bibitem{JK} C. Jackson, T. Kerler- {\em The Lawrence-Krammer-Bigelow representations of the braid groups via $U_q(sl_2)$}, Adv. Math. 228, 1689-1717, (2011).
\bibitem{Koh}  Toshitake Kohno - {Homological representations of braid groups and KZ connections.} J. Singul., 594-108, (2012).
\bibitem{Koh2} Toshitake Kohno- {Quantum and homological representations of braid groups.} Configuration Spaces - Geometry, Combinatorics and Topology, Edizioni della Normale, 355-372, (2012). 
\bibitem{Law1} R. J. Lawrence- {\em Homological representations of the Hecke algebra}, Comm. Math. Phys. 135, 141-19, (1990).
\bibitem{Law}  R. J. Lawrence - {A functorial approach to the one-variable Jones polynomial.} J. Differential Geom., 37(3):689-710, (1993).
\bibitem{Martel} J. Martel -{\em A homological model for $U_q(sl(2)$ Verma-modules and their braid representations}, arXiv:2002.08785 (2020).



\end{thebibliography}
\end{document}